\documentclass[letterpaper,11pt,reqno]{amsart}
\usepackage[foot]{amsaddr}
\usepackage{latexsym,amssymb,amsmath,amsfonts,amsthm,mathrsfs,xcolor}
\usepackage{bbm}
\usepackage{dsfont}
\usepackage{enumerate,enumitem}
\setlength{\textwidth}{6.5in}
\setlength{\textheight}{9in}
\setlength{\topmargin}{-.5in}
\setlength{\oddsidemargin}{.0in}
\setlength{\evensidemargin}{.0in}

\newtheorem{theorem}{Theorem}[section]
\newtheorem{corollary}[theorem]{Corollary}
\newtheorem{lemma}[theorem]{Lemma}
\newtheorem{prop}[theorem]{Proposition}
\newtheorem{conj}{Conjecture}
\newtheorem{remark}[theorem]{Remark}
\newtheorem{definition}[theorem]{Definition}
\newtheorem{assumption}[theorem]{Assumption}

\allowdisplaybreaks

\def\beq{ \begin{equation} }
\def\eeq{ \end{equation} }
\def\mn{\medskip\noindent}

\def\ep{\epsilon}

\def\square{\vcenter{\vbox{\hrule height .4pt
  \hbox{\vrule width .4pt height 5pt \kern 5pt
        \vrule width .4pt} \hrule height .4pt}}}

\def\ZZ{\mathbb{Z}}
\def\FF{\mathcal{F}}

\def\AA{\mathcal{A}}

\def\WW{\mathcal{W}}

\def\KK{\mathcal{K}}

\def\ep{\varepsilon}

\definecolor{darkblue}{rgb}{0,0,0.6}

\def\Cay{\mathrm{Cay}}

\def\P{\mathbb{P}}
\def\E{\mathbb{E}}

\def\HH{\mathcal{H}}
\def\id{\mathrm{id}}
\def\1{\mathds{1}}
\def\0{\textbf{0}}
\def\Unif{\mathrm{Unif}}
\def\typ{\mathrm{\mathbf{typ}}}

\def\gcd{\mathrm{gcd}}

\def\Var{\mathrm{Var}}

\def\vp{\mathbf{p}}

\def\C{\mathbf{C}}
\def\BB{\mathcal{B}}

\def\err{\mathrm{err}}

\newcommand{\floor}[1]{\lfloor #1 \rfloor}

\usepackage{graphicx}
\graphicspath{%
    {converted_graphics/}% inserted by PCTeX
    {/}% inserted by PCTeX
}
\def\RR{\mathrm{R}}
\def\SS{\mathrm{S}}

\begin{document}

\title[Cutoff on dihedral graphs]{Cutoff for Random Walks on Dihedral Groups}
\author[Huang]{Xiangying Huang}
\email{zoehuang@unc.edu}
\author[Rao]{Renyu Rao}
\email{renyu@unc.edu}
\address{Department of Statistics and Operations Research, University of North Carolina at Chapel Hill, USA}
%\date{\today}						

\begin{abstract}

We study the random walk on a finite dihedral group $G$ driven by the uniform measure on $k$ independently and uniformly chosen elements. We show that the walk exhibits cutoff with high probability throughout nearly the entire regime $1 \ll \log k \ll \log |G|$, and determine the precise cutoff time. Interestingly, this mixing time differs from the entropic time that characterizes cutoff behavior for random walks on Abelian groups. When $k \gg \log|G|$ and $\log k \ll \log|G|$, cutoff occurs with high probability on random Cayley graphs of virtually Abelian groups. The analysis develops techniques for obtaining sharper entropic estimates of an auxiliary process on high-dimensional lattices with dependent coordinates, which may also prove useful for related models in broader contexts.

\end{abstract}
\maketitle

\section{Introduction}

\subsection{Motivation and background}

The mixing behavior of random walks on finite groups is a classical problem in probability theory. Diaconis and Shahshahani \cite{diaconis1981generating} and Aldous and Diaconis \cite{aldous1986shuffling} studied card-shuffling models, which are random walks on the symmetric group. These studies provided some of the earliest rigorous proofs of the cutoff phenomenon, where the distance to stationarity drops abruptly from near its maximum to near zero over a short time window, known as the cutoff window.

Since then, the focus has broadened to other families of groups, with Abelian groups 
\cite{dou1996enumeration,dou1992studies,hermon2018supplementary,hermon2019further,hermon2021cutoff,hermon2021geometry,hough2017mixing} 
and nilpotent groups 
\cite{stong1995random,nestoridi2019super,peres2013mixing,ganguly2019upper,diaconis2021random,nestoridi2023random,diaconis1994moderate,hermon2024cutoff} 
being the most  extensively-studied examples. A central question motivating this line of research concerns the universality of cutoff, which is formalized as a conjecture by Aldous and Diaconis.
\begin{conj}[Aldous and Diaconis, 1985 \cite{AldousDiaconis1985techreport}]\label{universality_cutoff}
Let $G$ be a finite group and let $\mathcal{S} = \{ Z_i^{\pm1} : i \in [k]\}$ be a set where $Z_1,\dots,Z_k$ are i.i.d.\ uniform elements of $G$.  For $k$ satisfying $k \gg \log |G|$ and $\log k \ll \log |G|$,
 the random walk driven by the uniform measure over $\mathcal{S}$ exhibits cutoff with high probability.
\end{conj}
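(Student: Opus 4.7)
The plan is to prove matching upper and lower bounds on the $\epsilon$-mixing time that coincide to leading order, yielding cutoff with high probability over the random support. The first task is to identify the right candidate cutoff time $t_\star(G,k)$: for Abelian $G$ this is the well-studied entropic time, but as the abstract emphasizes, for dihedral $G$ the true mixing time already differs from the entropic prediction. So one must find a universal recipe for $t_\star$ that reflects the non-commutative structure of $G$. A natural candidate is the smallest $t$ at which the typical set of length-$t$ reduced words $Z_{i_1}^{\pm 1}\cdots Z_{i_t}^{\pm 1}$ in the random generators exhausts most of $G$ in a suitable entropic sense, where the entropy functional must account for collisions forced by commutator relations.

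For the upper bound, I would employ a representation-theoretic $L^2$ argument. The Fourier decomposition
$$4\|P_S^t - \pi\|_{TV}^2 \le \sum_{\rho \neq 1} d_\rho \, \|\hat{P}_S(\rho)^t\|_{HS}^2$$
sums over non-trivial irreducible representations $\rho$ of $G$. The matrix $\hat{P}_S(\rho)=\tfrac{1}{2k}\sum_i(\rho(Z_i)+\rho(Z_i^{-1}))$ is a sum of $k$ i.i.d.\ centered random matrices, and matrix Bernstein-type inequalities should yield $\|\hat{P}_S(\rho)\|=O(k^{-1/2}\log^{O(1)}|G|)$ with high probability. Iterating for $t>(1+\delta)t_\star$ and applying a union bound over $\rho$ would drive the right-hand side to zero. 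For the lower bound I would use a typicality argument: at $t<(1-\delta)t_\star$, concentration of the word-entropy of the auxiliary lattice process forces the walk to be supported on a set of size $|G|^{1-\Omega(1)}$ with high probability, keeping $\|P_S^t-\pi\|_{TV}$ bounded away from $0$.

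The main obstacle is identifying and controlling the correct entropic quantity for general $G$. The present paper already shows that the Abelian recipe fails for dihedral groups, and extending to arbitrary finite groups appears to require a combination of structural reductions (e.g., exploiting the derived series or composition factors of $G$) and new analytic tools for controlling random-matrix products associated to higher-dimensional representations. A plausible inductive scheme is to first handle the Abelianization, then lift through successive nilpotent or solvable quotients using the auxiliary lattice-process techniques this paper develops, with the dihedral case serving as the prototype for a single commutator extension; but a fully general solution likely requires significant new input to handle groups whose representation theory is not built from low-dimensional representations, since the operator-norm control of $\hat{P}_S(\rho)$ degrades as $d_\rho$ grows relative to $k$.
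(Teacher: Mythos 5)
The statement you are asked to prove is Conjecture~\ref{universality_cutoff}, the Aldous--Diaconis universality conjecture. It is an \emph{open conjecture}, not a theorem of this paper: the paper states it as motivation and then proves it only for special families of groups (dihedral groups in Theorem~\ref{cutoff_iid}, and virtually Abelian groups in the regime $k \gg \log|G|$, $\log k \ll \log|G|$ in Theorem~\ref{cutoff_virtually_abelian}). There is no ``paper's own proof'' of the general conjecture to compare against, and your proposal --- as you yourself acknowledge in the final paragraph --- does not constitute a proof either; it is a research sketch whose central step (identifying a universal entropic functional and controlling $\|\hat{P}_S(\rho)\|$ for high-dimensional $\rho$) is precisely the open problem.

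That said, it is worth comparing your sketched strategy to what the paper actually does for the special cases it handles, since the methods differ substantially. For the virtually Abelian case in the regime $k\gg\log|G|$, the paper's upper bound does not pass through a representation-theoretic $L^2$ expansion at all. Instead it uses a combinatorial ``uniqueness'' argument (Section~\ref{sec:upper_regime}): on a typical event, the product $X(X')^{-1}$ contains a generator $Z_a$ appearing exactly once, so that product is exactly uniform on $G$, which directly gives $|G|\cdot\P(X=X'\mid\typ) - 1 = o(1)$ via Lemma~\ref{largek_ub}. This avoids the degradation you correctly worry about when $d_\rho$ grows: no operator-norm control is ever required. For the lower bound, the paper's argument is closer in spirit to what you propose (support-size counting), but it is implemented via an explicit rearrangement into a $\{b_i\}$ part and a conjugated Abelian part, followed by a multinomial count exploiting commutativity in $H$; the ``entropy of an auxiliary lattice process'' only enters the sharper dihedral analysis of Section~\ref{sec:entropy_auxiliary_process}, not the general virtually Abelian lower bound. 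In the dihedral regime $1\ll k\lesssim\log|G|$, the paper does use a modified $L^2$/entropic framework (Lemma~\ref{TV_entropic}), but the relevant auxiliary object is the process $\C=(\C_\SS,\C_\RR)$ of Proposition~\ref{simplified_X}, whose $\C_\SS$ component has dependent coordinates and intrinsic dimension $k_\SS-1$; the key technical inputs are the entropic comparison of Proposition~\ref{entropy_approximation} and the multivariate-normal coupling of Lemma~\ref{coupling}, none of which appear in your sketch.

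The concrete gap in your proposal, beyond it being an attempt at an unsolved problem, is that the matrix-Bernstein step does not by itself locate the cutoff time. Getting $\|\hat{P}_S(\rho)\| = O(k^{-1/2}\,\mathrm{polylog}|G|)$ with high probability only shows that $t\gtrsim \log|G|/\log k$ suffices after summing over $\rho$, and it gives no lower bound matching to leading order; the leading constant in $t_0(k,G)$ is exactly what cutoff requires and is not recoverable from a crude norm bound on $\hat{P}_S(\rho)$. The paper's own results show this is delicate: already for dihedral groups the cutoff time is $\frac{k}{2\pi e}|G|^{2/(k-1)}$ rather than the Abelian entropic time $\frac{k}{2\pi e}|G|^{2/k}$, a discrepancy (exponent $2/(k-1)$ versus $2/k$) invisible at the level of order-of-magnitude spectral estimates. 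A correct general proof would need a mechanism, like the paper's auxiliary-process entropy or its exactly-once generator argument, that pins down the leading constant, not merely the order of $t_{\mathrm{mix}}$.
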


Indeed, the cutoff phenomenon is believed to occur universally in rapidly mixing, high-dimensional systems, such as random walks on groups driven by a sufficiently large number of generators. The verification of the Aldous-Diaconis universality conjecture has led to a substantial body of research, with considerable progress made \cite{dou1996enumeration, dou1992studies, hildebrand1994random, hildebrand2005survey, hough2017mixing, roichman1996random, wilson1997random, hermon2021cutoff, hermon2019cutoff, hermon2024cutoff}. Several of these notable contributions are reviewed in Section~\ref{universality_of_cutoff}.

Hermon and Olesker-Taylor~\cite{hermon2021cutoff} established that random walks on random Cayley graphs of Abelian groups exhibit cutoff with high probability in the extended (and necessary) regime $1 \ll \log k \ll \log|G|$. In this case, when the rank $r(G)$ of the Abelian group $G$ satisfies $r(G)\ll \log|G|$ and $k - r(G)\asymp k \gg 1$, the cutoff time is characterized as the moment when the entropy of a simple random walk on $\ZZ^k$ reaches $\log|G|$. 
Their work connects the mixing behavior of random walks on random Cayley graphs to the entropic evolution of an auxiliary process on the high-dimensional lattice $\ZZ^k$, where the cutoff phenomenon manifests as entropy concentration.

Subsequently, Hermon and Huang~\cite{hermon2024cutoff} extended the occurrence of cutoff  to random Cayley graphs of nilpotent groups throughout the regime $1 \ll \log k \ll \log|G|$, under some assumptions on the nilpotency class and rank. The mixing behavior in the nilpotent setting is largely governed by the walk's projection onto the Abelianization $G_{\mathrm{ab}} := G/[G,G]$, and the cutoff time is characterized by the moment when the entropy of a simple random walk on $\ZZ^k$ reaches $\log|G_{\mathrm{ab}}|$, provided $k$ is not too large. This indicates that the group structure does affect the entropy evolution of the original walk, an effect that was carefully captured through the analysis of a suitable auxiliary process in ~\cite{hermon2024cutoff}.

The reduction of the mixing behavior of random walks on nilpotent groups to their projections onto Abelian subgroups motivates the study of mixing behavior on groups that possess a structurally dominant Abelian component. Among such examples, the dihedral group provides a natural next case: it contains a large cyclic subgroup of index two, providing the simplest non-nilpotent setting where the Abelian structure remains significant.

The dihedral group of order $2n$ is generated by \textit{reflections} and \textit{rotations}, and is defined by
\beq\label{dihedral_group}
G = G^{(n)} = \langle r, s : r^n = \id, \; s^2 = \id, \; srs = r^{-1} \rangle, \qquad n \in \mathbb{N}.
\eeq
This group is solvable and virtually Abelian, but not nilpotent when $n$ is not a power of two. Nilpotent groups are often viewed as ``almost Abelian'' because they can be decomposed into successive Abelian quotients, whereas virtually Abelian groups are ``almost Abelian'' in a different sense, with a large Abelian subgroup of finite index. In this setting, the quotient-based analysis employed in \cite{hermon2024cutoff} no longer applies. This distinction is particularly evident in the study of dihedral groups, where the broader goal is to develop entropic techniques that extend to virtually Abelian groups.

For random walks on a dihedral group $G=G^{(n)}$ with prime $n$, driven by $k$ i.i.d. uniformly random generators, McCollum \cite{mccollum2011random, mccollum2013upper, mccollum2006random} showed that the mixing time is of order $n^{2/(k-1)}$ when $k$ is a fixed integer. When $k = \lfloor (\log |G|)^\alpha \rfloor$ with $\alpha > 1$, the analysis of Dou and Hildebrand~\cite{hildebrand1994random,dou1996enumeration} suggests that the cutoff occurs at time $\frac{\log |G|}{\log (k / \log |G|)}$. 
A unified perspective on these mixing times for different values of $k$ can be obtained by tracking the entropy evolution of an auxiliary process associated with the walk, which is defined in Definition~\ref{C_a}.
Heuristically, the proposed mixing time in~\eqref{cutoff_time} corresponds to the moment when the auxiliary process attains entropy $\log |G|$, while the cutoff phenomenon reflects the concentration of the random entropy around this point.

Our main result, Theorem~\ref{cutoff_iid}, establishes the occurrence of cutoff for simple random walks on dihedral groups driven by $k$ i.i.d.\ uniformly chosen generators, throughout almost the entire regime $1 \ll \log k \ll \log |G|$, at the mixing time $t_0(k,G)$ given in~\eqref{cutoff_time}. In particular, in the regime $k \gg \log|G|$ with $\log k \ll \log|G|$, cutoff is further established for the broader class of virtually Abelian groups at the same time $t_0(k,G)$, see Theorem~\ref{cutoff_virtually_abelian}.

The remainder of this section is organized as follows. In Section~\ref{main_result}, we state our main results precisely. Section~\ref{entropic_framework} outlines the methodological framework used in the proofs, and Section~\ref{universality_of_cutoff} reviews prior work on the universality of cutoff for random walks on groups.

\subsection{Statement of results}\label{main_result}

We now introduce the precise setting and notation, and state our main results. Random walks on groups are naturally interpreted as random walks on \emph{Cayley graphs} generated by a given set of generators $\mathcal{S}\subseteq G$, defined as follows. A set $\mathcal{S} \subseteq G$ is called \emph{symmetric} if $g^{-1} \in \mathcal{S}$ whenever $g \in \mathcal{S}$, and is called a set of generators if $\mathcal{S}$ generates $G$.

\begin{definition}[Undirected Cayley graph]\label{Cayley_graph}
Let $G$ be a finite group and $\mathcal{S} = \{ z_i^{\pm 1} : i \in [k] \} \subseteq G$ a symmetric set of generators. Let $\Cay(G, \mathcal{S})$ denote the (right) Cayley graph on $G$ generated by $\mathcal{S}$, where the vertex set is
$
\mathbf{V} = \{ g : g \in G \}
$
and the edge set is
$
\mathbf{E} = \{ \{g, gz\} : g \in G, z \in \mathcal{S} \}.
$
\end{definition}
\begin{remark}
The generating set $\mathcal{S}$ is a multiset, i.e., we allow parallel edges and self-loops (if $\id \in \mathcal{S}$), so that the Cayley graph $\Cay(G,\mathcal{S})$ is regular of degree $2k$.
\end{remark}

We use standard notation and definitions for mixing times and cutoff. Let $G$ denote a finite group, and $P$ denote the transition kernel of the simple random walk driven by the set of generators $\mathcal{S}\subseteq G$, i.e., for $x,y\in G$,
$$P(x,y)=\frac{1}{|\mathcal{S}|}\cdot \1\{x^{-1}y\in \mathcal{S}\}.$$
The stationary distribution $\pi$ is uniform over $G$. For $t \in \mathbb{R}_+$, the transition kernel of a continuous-time simple random walk $X=(X(t))_{t\geq 0}$ with rate 1 is given by
$
P_t := \sum_{k=0}^\infty \frac{t^k}{k!} P^k e^{-t},
$
with $P$ defined as above. The walk $X=(X(t))_{t\geq 0}$ is also referred to as the random walk on $\Cay(G,\mathcal{S})$.

The total variation distance to stationarity at time $t$ is defined as
\[
d_{\mathrm{TV}}(t):=\max_{x\in G}\|P_t(x, \cdot) - \pi\|_{\mathrm{TV}} := \frac{1}{2} \max_{x\in G}\sum_{y\in G} |P_t(x,y) - \pi(y)|,
\]
and the $\varepsilon$-total variation mixing time is given by
\[
t_{\mathrm{mix}}(\varepsilon) := \min\left\{t : d_{\mathrm{TV}}(t)\leq \varepsilon \right\}.
\]

The rate-1 simple random walk $X=(X(t))_{t\geq 0}$ on the Cayley graph $\Cay(G,\mathcal{S})$ is said to exhibit cutoff at time $t_0=t_0(G,\mathcal{S})$ if, for any $\ep>0$, 
$$d_{\mathrm{TV}}((1-\ep)t_0)=1-o(1) \quad \text{ and }\quad d_{\mathrm{TV}}((1+\ep)t_0)=o(1)$$ 
when $|G|$ is growing to infinity. In what follows, we take $G=G^{(n)}$ to be a sequence of finite groups with diverging size.

\medskip

Let $\lambda>0$ be a constant. We define the proposed mixing time 
\begin{equation}\label{cutoff_time}
t_0(k,G) =
\begin{cases}
\dfrac{k}{2\pi e} \, |G|^{2/(k-1)} & \text{when } k \ll \log |G|,\\[2mm]
t_{\mathrm{ent}}(k,|G|) & \text{when } k \eqsim \lambda \log |G|,\\[1mm]
\dfrac{\log |G|}{\log (k / \log |G|)} & \text{when } k \gg \log |G|,
\end{cases}
\end{equation}
where $t_{\mathrm{ent}}(k,|G|)$  is the time at which the  entropy of a rate-1 simple random walk on $\ZZ^k$ reaches $\log|G|$, see Proposition  \ref{asymp_entropic_time} for its known sharp asymptotics.

 \begin{theorem}\label{cutoff_iid}
 Let $G = G^{(n)}$ be a sequence of finite dihedral groups as in \eqref{dihedral_group} with prime $n$, and let $k = k_n$ diverge with $n$. Let $\mathcal{S} = \{Z_i^{\pm 1} : i \in [k]\}$, where $Z_1, \dots, Z_k$ are i.i.d.\ uniform elements of $G$. Assume that $1 \ll \log k \ll \log |G|$.
 Within this range, if either
\[
\text{(i)} \; k^2 \ll |G|^{2/k} \quad \text{or} \quad 
\text{(ii)} \; k \gg |G|^{2/k} (\log k)^2,
\]
then the random walk on $\Cay(G, \mathcal{S})$ exhibits cutoff with high probability at the time $t_0(k,G)$ defined in \eqref{cutoff_time}.

 \end{theorem}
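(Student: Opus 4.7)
The plan is to adapt the entropic framework of Hermon and Olesker-Taylor~\cite{hermon2021cutoff} (Abelian setting) and Hermon and Huang~\cite{hermon2024cutoff} (nilpotent setting) to the virtually Abelian dihedral case. Writing each element of $G$ as $r^a s^b$ with $a \in \ZZ_n$ and $b \in \{0,1\}$, decompose each generator $Z_i$ as a pair $(z_i, \beta_i) \in \ZZ_n \times \{0,1\}$; since $Z_i$ is uniform on $G$, the rotation exponent $z_i$ and the parity bit $\beta_i$ are independent uniforms. After a sequence $Z_{j_1}^{\eta_1},\ldots,Z_{j_t}^{\eta_t}$ of right multiplications by elements of $\mathcal{S}$, the resulting element has parity $b(t)=\sum_{\ell\le t}\beta_{j_\ell} \bmod 2$ and rotation coordinate $a(t)=\sum_{\ell\le t}\sigma_\ell \eta_\ell z_{j_\ell} \bmod n$, where $\sigma_\ell\in\{\pm 1\}$ is determined by the cumulative parity of $\beta_{j_1},\ldots,\beta_{j_{\ell-1}}$. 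The auxiliary process $C(t)\in\ZZ^k$ of Definition~\ref{C_a}, recording $C_i(t)=\sum_{\ell\le t,\,j_\ell=i}\sigma_\ell\eta_\ell$, thus determines $a(t)=\langle C(t),z\rangle \bmod n$, and mixing of $X$ reduces to joint mixing of $(b(t),\langle C(t),z\rangle)$.

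For the upper bound $d_{\mathrm{TV}}((1+\varepsilon)t_0)=o(1)$, I would use Plancherel with the dihedral representation theory, splitting the Fourier sum into one-dimensional characters (which factor through the parity coordinate and mix in $O(1)$ time) and two-dimensional irreducibles $\rho_m$. Each $\rho_m$ contributes terms of the form $\E[\cos(2\pi m\langle C(t),z\rangle/n)]$. Conditioning first on the parity vector $\beta$ and on the trajectory of $C$, the randomness in $z$ reduces the estimate to an Abelian-style Fourier analysis of a random integer combination modulo the prime $n$. In regime (ii) one couples $C(t)$ with a rate-$1$ simple random walk on $\ZZ^k$ and invokes the local CLT / entropic estimates as in~\cite{hermon2021cutoff}; in regime (i) one instead bounds the $\ell^2$ norm by counting short signed words, extending McCollum's combinatorial approach~\cite{mccollum2011random}.

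For the lower bound $d_{\mathrm{TV}}((1-\varepsilon)t_0)=1-o(1)$, I would apply the entropy method: if $H(X(t))\le \log|G|-\omega(1)$ with high probability over the generators, the walk is far from uniform in total variation. The time $t_0(k,G)$ is calibrated precisely so that the Shannon entropy of $(C(t),b(t))$ reaches $\log|G|$ at $t_0$, and the lower bound therefore reduces to entropy concentration of this pair around its mean. In regime (i) this is a small-ball / Littlewood-Offord-type estimate, showing that short signed words rarely collide in $\ZZ_n$; in regime (ii) it is the sharpened entropic estimate for an auxiliary process on $\ZZ^k$ with dependent coordinates advertised in the abstract.

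The main obstacle is precisely this dependence among the coordinates of $C$: in the purely Abelian case, $C$ is a simple random walk on $\ZZ^k$ with i.i.d.\ steps, whereas in the dihedral case the sign $\sigma_\ell$ couples each step to the reflection history, so that the triples $(j_\ell,\eta_\ell,\sigma_\ell)$ are correlated through the past parities $\beta_{j_1},\ldots,\beta_{j_{\ell-1}}$. Producing entropy concentration sharp enough to pin down the cutoff window requires a tailored second-moment analysis that quantitatively decorrelates the generator labels from the sign sequence. This dependence is also why the narrow gap between regimes (i) and (ii) is excluded from the theorem: it is precisely where neither the combinatorial word-counting of regime (i) nor the local-CLT sharpness of regime (ii) closes the gap between the upper and lower entropic bounds.
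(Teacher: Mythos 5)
Your high-level framework---decomposing each generator into a rotation part and a parity bit, tracking the signed-count auxiliary process $C(t)$, and calibrating $t_0$ by the entropy of the auxiliary process---is aligned with the paper's strategy. But the technical route diverges in several material ways, and at least one of your proposed steps would miss the crux of the result.

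First, the paper's upper bound is not Fourier/Plancherel with dihedral representation theory. It uses the $L^2$-collision bound of Lemma~\ref{TV_entropic} (following \cite{hermon2021cutoff}): condition on the auxiliary process landing in a typical entropic set $\WW^+$, then bound $|G|\cdot\P(X=X'|\typ)-1$ by splitting into $\{\C=\C'\}$ and $\{\C\ne\C'\}$, using $\gcd$-uniformity (Lemma~\ref{gcd_unif}) for the latter. A Fourier approach is plausible in principle but is not what the paper does, and your sketch does not indicate how the 2D-irrep sum would be controlled sharply near the cutoff time. Second, you have the regime-specific methods assigned backwards relative to what is actually needed: in Regime (i) ($k^2\ll|G|^{2/k}$, i.e.\ $k$ small) the paper uses multivariate-Gaussian approximation (Lemma~\ref{coupling}, Proposition~\ref{CS_entropy_concentration}) because $t_0\gg k^3$ gives enough jumps per coordinate for a local-CLT argument; in Regime (ii) ($k$ large, few jumps per coordinate) it instead uses an Efron--Stein/tensorization bound on the varentropy (Proposition~\ref{entropy_concentration_Y}). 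Your proposed McCollum-style word counting in Regime (i) and SRW-coupling/local-CLT in Regime (ii) is the opposite of what the dimensional heuristics support.

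The most consequential gap is the proposed coupling of $C(t)$ with a rate-$1$ simple random walk on $\ZZ^k$. The reflection block $\C_\SS$ is \emph{not} close to a SRW on $\ZZ^{k_\SS}$: by Proposition~\ref{auxiliary_law} it is a difference of two i.i.d.\ multinomials (plus a parity term), so it is supported on the hyperplane $\sum_a C_{\SS,a}\in\{0,1\}$. This global constraint reduces the effective dimension to $k_\SS-1$ and is precisely why the cutoff time is $\tfrac{k}{2\pi e}|G|^{2/(k-1)}$ in Regime (i) rather than the naive $\ZZ^k$-entropic time $\tfrac{k}{2\pi e}|G|^{2/k}$ --- a distinction the paper explicitly flags as asymptotically significant when $|G|^{2/k^2}\gg 1$. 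If you couple with a SRW on $\ZZ^k$ and read off its entropic time, you get the wrong constant of mixing throughout most of Regime (i). The paper's replacement is a new comparison argument (Proposition~\ref{entropy_approximation}) that computes the entropy of a multinomial difference via Poissonization against a genuine SRW, plus a careful multivariate-normal typical-set analysis (Lemmas~\ref{typical_normal}, \ref{density_comparison}, \ref{multinomial_density_comparison}) respecting the degenerate covariance $\Sigma=\mathrm{diag}(\hat\vp)-\hat\vp\hat\vp^T$. Without something equivalent, your lower and upper bounds would not meet at the stated $t_0$.
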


A simpler (though not sharpest) interpretation of Theorem~\ref{cutoff_iid} is that cutoff occurs with high probability if either
\[
k \leq \frac{\log |G|}{\log \log |G|} \qquad \text{or} \qquad 
k \geq \frac{2(1+\varepsilon)\log |G|}{\log \log |G|} \quad \text{for any } \ep > 0.
\]
We believe that the gap in the middle is a technical artifact of our entropic estimates, and that cutoff should occur throughout the entire regime $1 \ll \log k \ll \log |G|$.

We also comment on the distinction between 
$t_0(k,G)=\frac{k}{2\pi e}\,|G|^{2/(k-1)}$ 
and 
$t_{\mathrm{ent}}(k,G)\asymp \frac{k}{2\pi e}\,|G|^{2/k}$ 
in the regime $k\ll \log |G|$. 
In particular, the discrepancy in the exponents is asymptotically significant when $|G|^{2/k^2}\gg 1$, but becomes negligible once $k$ is sufficiently large. 
From an entropic viewpoint, the exponent $\frac{2}{k-1}$ arises because the auxiliary process effectively has dimension $k-1$, which reflects the algebraic structure of the dihedral groups. This point will be clarified further in Section~\ref{sec: asymptotics of entropy}.

\medskip

In the regime $k\gg \log|G|$ with $\log k\ll \log|G|$, cutoff can be shown for the general class of virtually Abelian groups at time $t_0(k,G)$.

 \begin{theorem}\label{cutoff_virtually_abelian}
Let $G = G^{(n)}$ be a sequence of finite virtually Abelian groups, and let $H = H^{(n)} \trianglelefteq G^{(n)}$ be a sequence of normal Abelian subgroups such that $\sup_n |G^{(n)}|/|H^{(n)}|<\infty$.
Let $k = k_n$ and 
$
\mathcal{S} = \{ Z_i^{\pm1} : i \in [k] \},
$
where $Z_1, \dots, Z_k$ are i.i.d.\ uniform elements of $G$.  
If $k \gg \log |G|$ and $\log k \ll \log |G|$, then the simple random walk on $\Cay(G, \mathcal{S})$ exhibits cutoff with high probability at time
$$
t_0(k,G) = \frac{\log |G|}{\log \bigl( k / \log |G| \bigr)}.
$$
\end{theorem}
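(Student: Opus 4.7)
The plan is to reduce the cutoff problem on the virtually Abelian group $G$ to that on its normal Abelian subgroup $H$ of bounded index $m := \sup_n [G^{(n)}:H^{(n)}] < \infty$, and then invoke the Hermon--Olesker-Taylor cutoff theorem \cite{hermon2021cutoff} for random walks on random Cayley graphs of Abelian groups. Since $\log|G| = \log|H|+O(1)$ and $\log(k/\log|G|) \to \infty$ in the given regime, the proposed time satisfies $t_0(k,G) = t_0(k,H)(1+o(1))$, so it suffices to establish cutoff at time $t_0(k,H)$ for the walk on $G$.

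For the upper bound, I would proceed via Fourier/representation-theoretic estimates on $G$. By Clifford theory, the irreducible representations of $G$ are parametrized by $G/H$-orbits of $H$-characters, so $L^2(G)$ decomposes into at most $|G/H|^2 = O(1)$ types of characters. The standard $L^2$ upper bound
\[
4\, d_{\mathrm{TV}}(t)^2 \leq \sum_{\rho \neq \mathrm{triv}} d_\rho^2\, \|\widehat{P_t}(\rho)\|_{\mathrm{HS}}^2
\]
then reduces, up to bounded multiplicative constants, to the $L^2$ bound for the walk on $H$ driven by the $H$-components of the generators $Z_i$. Since $Z_i$ is uniform on $G$, these $H$-components are uniform on $H$ under any section, so the random character sums agree in distribution with those in the Abelian setting; the argument of \cite{hermon2021cutoff} at the entropic time $t_{\mathrm{ent}}(k,|H|) = t_0(k,G)(1+o(1))$ yields $o(1)$ for each such sum.

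For the lower bound, I would use the entropic framework of Section~\ref{entropic_framework}: the entropy of the position $X_t$ on $G$ is bounded above by the entropy of the auxiliary process on $\ZZ^k$ plus the $O(1)$ contribution from the trajectory of the $G/H$-projection, which takes values in a bounded set. Since the auxiliary entropy lies strictly below $\log|G|$ at time $t = (1-\varepsilon)t_0(k,G)$, so does the entropy of $X_t$, forcing $d_{\mathrm{TV}}((1-\varepsilon) t_0) \to 1$.

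The main obstacle is the dependence structure among $H$-characters within a single $G/H$-orbit: such characters, evaluated at the same $Z_i$, produce correlated values, so the random character sums are no longer products of independent factors as in the purely Abelian case, and the conjugation cocycle of the extension $1 \to H \to G \to G/H \to 1$ enters the Fourier analysis nontrivially. Because $m$ is a constant, these correlations can in principle be absorbed into bounded multiplicative factors; however, carrying this out while preserving the sharp threshold $t_0(k,G)$ requires a careful twisted version of the Abelian Fourier computation and control uniform in $n$.
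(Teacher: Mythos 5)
Your proposal takes a genuinely different route from the paper for both bounds. For the upper bound, the paper avoids representation theory entirely and instead uses a Roichman-style argument adapted from \cite{roichman1996random,hermon2021cutoff}: on a typical event, with probability $1-o(|G|^{-1})$ some generator $Z_a$ appears exactly once in the product $X(X')^{-1}$, and since $Z_a$ is independently uniform on $G$ it follows that $X(X')^{-1}$ is uniform, giving the sharp $L^2$ bound directly. This works for \emph{any} finite group with i.i.d.\ uniform generators and bypasses exactly the difficulty you flag at the end of your sketch, namely the correlations among $H$-characters within a $G/H$-orbit and the conjugation cocycle of the extension. Your Fourier/Clifford approach is plausible in spirit (Dou--Hildebrand's original treatment is representation-theoretic), but you have only sketched it and identified, without resolving, the central technical obstacle.

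The more serious issue is the lower bound, where there is a genuine gap. You claim that $H(X_t)$ is bounded by the entropy of ``the auxiliary process on $\ZZ^k$'' (the vector of signed generator counts, call it $W_t$) plus an $O(1)$ contribution from the $G/H$-projection. But $X_t$ is not a function of $(W_t, X_t \bmod H)$ in a non-Abelian group: the $H$-component of $X_t$ depends on the \emph{order} in which the generators are applied, because after decomposing $Z_a = B_a V_a$ with $B_a$ a coset representative and $V_a \in H$, rearranging the product moves each $V_{\sigma_i}$ past the $B$-letters to its right, replacing it by the conjugate $\bar V_{\sigma_i} = \bigl(\prod_{j>i} B_{\sigma_j}\bigr)^{-1} V_{\sigma_i} \bigl(\prod_{j>i} B_{\sigma_j}\bigr)$. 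These conjugates are not determined by $W_t$, so the claimed inequality $H(X_t) \leq H(W_t) + O(1)$ does not hold. (Also, the ``trajectory of the $G/H$-projection'' does \emph{not} take values in a bounded set: it lives in $(G/H)^{N(t)}$ of size $m^{N(t)}$; only the endpoint does, and the endpoint is not enough.) A correct auxiliary process would have to encode the relevant conjugation data --- this is precisely the role of $\C(t)$ in the dihedral case in Section~\ref{sec:preliminaries}, and it is nontrivial to set up.

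The paper instead proves the lower bound by a direct support-size estimate: on the typical event $\typ(\delta)$ of \eqref{typical_event_lb}, the rearranged $H$-part $\bar V_{\sigma_1}\cdots \bar V_{\sigma_N}$ consists of at most $(km)^N$ sequences, and commutativity of $H$ together with the ``good'' generating-set condition (that the $V_a$ lie in distinct conjugacy classes, which holds w.h.p.\ when $\log k \ll \log|G|$) shows that each value of the product is attained by at least $N!/\prod_i (i!)^{|\bar J_i|}$ of these sequences. Combined with the $m^N$ possible $B$-prefixes, this gives a support of size at most $|G|^{1-\ep/4}$. To repair your lower bound you would either need to reproduce this counting argument, or construct a conjugation-aware auxiliary process for general virtually Abelian $G$ and establish the corresponding entropic estimate --- neither of which is $O(1)$ work from where your sketch stops.
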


 We note that virtually Abelian groups are a special case of the groups considered in Dou and Hildebrand \cite[Assumption 1]{dou1996enumeration}, where they prove a sharp lower bound on the mixing times when $k = \lfloor (\log |G|)^\alpha \rfloor$ with $\alpha > 1$; see \cite[Theorem 4]{dou1996enumeration}. In Theorem \ref{cutoff_virtually_abelian}, we extend the result to the regime where $k \gg \log|G|$ and $\log k \ll \log|G|$.

\subsection{Overview and Methodology}\label{entropic_framework}
Studying the mixing behavior through the entropy of an auxiliary process is commonly referred to as the \emph{entropic methodology}. This approach has proven effective for random walks driven by random generators in several prior works on Abelian and nilpotent groups \cite{hermon2019cutoff, hermon2021cutoff, hermon2024cutoff}, though a key challenge is that the auxiliary processes depend on the group's commutativity and can be quite intricate to analyze.

To illustrate the main ideas of the entropic methodology, we first describe its application to Abelian groups, following the approach in \cite{hermon2021cutoff}. For a rate-1 random walk $X=(X(t))_{t\geq 0}$ on an Abelian group $G$ with a symmetric generating set $\mathcal{S}=\{ s_i^{\pm 1}: i \in [k]\}$, one introduces an auxiliary process 
\[
W=W(t) = \big(W_1(t),\dots, W_k(t)\big), \quad \text{for }t\geq0,
\] 
where $W_i(t)$ records the number of times generator $s_i$ has been applied minus the number of times its inverse $s_i^{-1}$ has been applied in the walk up to time $t$. It is straightforward to observe that $W$ is a rate-1 simple random walk on $\ZZ^k$ and that 
$$X(t)=\sum_{i\in[k]}W_i(t)s_i.$$

Let $t_\mathrm{ent}(k,G)$ denote the time at which the entropy of $W$ reaches $\log |G|$. Intuitively, before this time, $W$ is largely concentrated on a set of size $o(|G|)$, so the associated walk $X(t)$ is also confined to a set of size $o(|G|)$ and cannot be mixed yet. After $t_\mathrm{ent}(k,G)$, one can show that with high probability $W(t)$ lies in the set
\[
\WW(t)=\{ w\in \ZZ^k : \P(W(t)=w)\leq e^{-\omega}|G|^{-1} \}
\]
for some slowly diverging $\omega\gg 1$, see, e.g., Proposition 2.3 of \cite{hermon2019cutoff}. The entropic framework then allows one to bound the mixing time via the following observations.

\begin{lemma}[Lemma 2.6 of \cite{hermon2021cutoff}]\label{TV_entropic}
Let $\P_\mathcal{S}$ denote the law of the random walk $X=(X(t))_{t\geq 0}$ on an Abelian group $G$ driven by $\mathcal{S}$. For any $t \geq 0$ and any set $\mathcal{W}\subseteq \ZZ^k$,
\begin{align*}
 \left\| \P_{\mathcal{S}}(X(t)=\cdot)-\pi \right\|_{\mathrm{TV}}
&\leq \left\| \P_{\mathcal{S}}(X(t)= \cdot | W(t)\in \mathcal{W})-\pi \right\|_{\mathrm{TV}}
+ \P(W(t)\notin \mathcal{W}), \\
4\, \left\|\P_{\mathcal{S}}(X(t)= \cdot | W(t)\in \mathcal{W})-\pi \right\|^2_{\mathrm{TV}}
&\leq |G|\cdot \P_{\mathcal{S}}\left(X(t)=X'_t| W(t),W'(t)\in \mathcal{W}\right)-1,
\end{align*}
where $X'_t$ and $W'(t)$ are independent copies of $X(t)$ and $W(t)$, respectively.
\end{lemma}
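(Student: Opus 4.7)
The plan is to prove both inequalities via standard TV-manipulation techniques---a conditional decomposition and an $L^2$-to-$L^1$ reduction---exploiting only that $\pi$ is uniform on $G$; no new machinery is required.

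For the first inequality, I would decompose the unconditional law according to the event $\{W(t)\in\mathcal{W}\}$. Setting $p:=\P(W(t)\in\mathcal{W})$ and assuming without loss of generality $p>0$ (otherwise the bound is trivial), the law of total probability yields
\[
\P_{\mathcal{S}}(X(t)=\cdot)-\pi \;=\; p\bigl(\P_{\mathcal{S}}(X(t)=\cdot\mid W(t)\in\mathcal{W})-\pi\bigr) + (1-p)\bigl(\P_{\mathcal{S}}(X(t)=\cdot\mid W(t)\notin\mathcal{W})-\pi\bigr).
\]
Taking TV norms and applying the triangle inequality in $\ell^1$, the first piece contributes at most $p$ times the conditional TV distance (and $p\leq 1$), while the second is bounded by $(1-p)\cdot 1 = \P(W(t)\notin\mathcal{W})$ since every TV distance is at most $1$.

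For the second inequality, I would work at the level of $\mu(x):=\P_{\mathcal{S}}(X(t)=x\mid W(t)\in\mathcal{W})$. Cauchy--Schwarz gives
\[
4\|\mu-\pi\|_{\mathrm{TV}}^2 \;=\; \Bigl(\sum_{x\in G}|\mu(x)-\pi(x)|\Bigr)^2 \;\leq\; |G|\sum_{x\in G}\bigl(\mu(x)-\pi(x)\bigr)^2,
\]
and using $\pi(x)\equiv 1/|G|$ the right-hand sum simplifies to $\sum_{x\in G}\mu(x)^2 - 1/|G|$. Introducing an independent copy $(X'_t,W'(t))$ of $(X(t),W(t))$ allows the sum of squares to be identified with the conditional collision probability
\[
\sum_{x\in G}\mu(x)^2 \;=\; \P_{\mathcal{S}}\bigl(X(t)=X'_t\,\big|\,W(t)\in\mathcal{W},\,W'(t)\in\mathcal{W}\bigr).
\]
Multiplying through by $|G|$ delivers the stated bound.

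No step here presents a genuine obstacle; the lemma is essentially a packaging of two standard observations in a form tailored to the entropic framework. The real utility is downstream: the conditioning localizes the auxiliary walk $W(t)$ to a ``good'' set $\mathcal{W}$ on which no single atom of its law is too large, and the $L^2$ step converts the mixing problem into an estimate of $\P_{\mathcal{S}}(X(t)=X'_t \mid W(t),W'(t)\in\mathcal{W})$. In the Abelian setting this collision probability reduces to controlling the event $\sum_{i\in[k]}(W_i(t)-W'_i(t))\,s_i = \id$ for two independent walks on $\ZZ^k$, which is where the entropic estimates of the paper enter.
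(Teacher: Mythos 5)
Your proof is correct. The paper states this as Lemma~2.6 of \cite{hermon2021cutoff} and does not reproduce a proof; your argument (conditional decomposition plus triangle inequality for the first bound, Cauchy--Schwarz and the collision-probability identity for the second) is the standard argument and matches the approach taken in that reference.
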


Lemma~\ref{TV_entropic} reduces the total variation distance analysis to a hitting probability, which can be further decomposed as
\begin{align*}
\P_{\mathcal{S}}\left(X(t)=X'_t| W(t),W'(t)\in \mathcal{W}\right)
\leq& \, \P_{\mathcal{S}}(W(t)=W'(t)| W(t),W'(t)\in \mathcal{W}) \\
&+ \P_{\mathcal{S}}(X(t)=X'(t),\, W(t)\neq W'(t)| W(t),W'(t)\in \mathcal{W}),
\end{align*}
where the first term is controlled via the entropic analysis of $W$. This captures the main idea behind the entropic methodology. Consequently, the entropic time $t_\mathrm{ent}(k,G)$ determines the mixing (and cutoff) time of the walk $X$ for Abelian groups satisfying appropriate regularity conditions, providing explicit asymptotics for the mixing times in these cases. We present the asymptotics of $t_\mathrm{ent}(k,G)$ below, following Proposition A.2 of \cite{hermon2018supplementary}.

\begin{prop}[Proposition A.2 \cite{hermon2018supplementary}]\label{asymp_entropic_time}
Let $t_{\mathrm{ent}}(k,N)$ denote the time at which the entropy of a rate-1 simple random walk on $\mathbb{Z}^k$ reaches $\log N$. We refer to $t_{\mathrm{ent}} := t_{\mathrm{ent}}(k, |G|)$ as the \emph{entropic time}, which satisfies the following asymptotics:
\[
t_{\mathrm{ent}}(k,|G|) \eqsim
\begin{cases}
\dfrac{k}{2\pi e}  |G|^{2/k} & \text{when } k \ll \log |G|,\\[1mm]
k f(\lambda) & \text{when } k \eqsim \lambda \log |G|,\\[1mm]
\dfrac{\log |G|}{\log (k / \log |G|)} & \text{when } k \gg \log |G|,
\end{cases}
\]
where $f: \mathbb{R}_+ \to \mathbb{R}_+$ is a continuous function.
\end{prop}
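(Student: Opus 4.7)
The plan is to exploit the product structure of the walk. Under Poissonization, the coordinates of the rate-$1$ SRW on $\mathbb{Z}^k$ are independent continuous-time SRWs on $\mathbb{Z}$ with jump rate $1/k$, so the entropy factorizes as $H(W(t)) = k\, H(W_1(t))$. Since $W_1(t) \;{\buildrel d \over =}\; B_+ - B_-$ with independent $B_\pm \sim \mathrm{Poisson}(\mu)$ for $\mu := t/(2k)$, the distribution of $W_1(t)$ is $\mathrm{Skellam}(\mu)$, and the task reduces to estimating $h(\mu) := H(\mathrm{Skellam}(\mu))$ and then inverting $k\, h(t/(2k)) = \log N$ in each asymptotic regime of $\mu$.

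For large $\mu$ (the regime $k \ll \log N$, forcing $t \gg k$), I would apply a local central limit theorem for $W_1(t)$ with variance $\sigma^2 = 2\mu = t/k$, together with the modified Bessel asymptotic $I_n(2\mu) \sim e^{2\mu}/\sqrt{4\pi\mu}$, to obtain $h(\mu) = \tfrac{1}{2}\log(4\pi e \mu) + o(1)$. Substituting and solving $\tfrac{k}{2}\log(2\pi e\, t/k) = \log N$ yields $t_{\mathrm{ent}} \sim \tfrac{k}{2\pi e}\, N^{2/k}$.

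For small $\mu$ (the regime $k \gg \log N$, forcing $t \ll k$), I would use the series $\P(W_1 = n) = e^{-2\mu}\sum_{j\ge 0}\mu^{2j+|n|}/[j!(j+|n|)!]$ to see that $\P(W_1=0) = 1 - 2\mu + O(\mu^2)$, $\P(W_1=\pm 1) = \mu + O(\mu^3)$, and $\P(|W_1|\ge 2) = O(\mu^2)$. A direct computation of $-\sum p_n \log p_n$ from the three dominant atoms gives $h(\mu) = 2\mu \log(1/\mu) + O(\mu)$, so $H(W(t)) = t\log(2k/t)(1+o(1))$. Inverting $t\log(k/t) \sim \log N$ subject to $t \ll k$ by a single iteration yields $t_{\mathrm{ent}} \asymp \log N/\log(k/\log N)$.

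For the intermediate regime $k \asymp \lambda \log N$, note that $h:(0,\infty)\to(0,\infty)$ is smooth and strictly increasing (seen by differentiating the Skellam probabilities under the sum), hence a bijection. The equation $h(\mu) = \log N/k \to 1/\lambda$ therefore determines $\mu$ as a continuous function $h^{-1}(1/\lambda)$ of $\lambda$, giving $t_{\mathrm{ent}} = 2k\mu \sim k f(\lambda)$ with $f(\lambda) := 2 h^{-1}(1/\lambda)$ continuous. The main technical obstacle is controlling sub-leading terms of the Skellam entropy \emph{uniformly} in $\mu$ at the boundaries between regimes: in particular, tracking the $o(1)$ in the Gaussian regime precisely enough to extract the sharp constant $1/(2\pi e)$, and ensuring that the small-$\mu$ and large-$\mu$ expansions connect continuously with the function $f(\lambda)$ at $\lambda \to 0$ and $\lambda \to \infty$. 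A clean route is to combine a uniform LCLT tail bound with a direct truncation estimate on the entropy sum restricted to $\{|n| \le C\sqrt{\mu \vee 1}\, \log(\mu \vee 1/\mu)\}$.
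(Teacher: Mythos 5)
The paper does not actually prove this proposition; it is imported verbatim as a citation to Proposition~A.2 of \cite{hermon2018supplementary}, and the same paper's Propositions~A.9--A.10 (quoted here as Lemma~\ref{entropy_supplementary} and the monotonicity/differentiability facts that follow it) supply the one-dimensional entropy asymptotics $h(s) = \tfrac12\log(2\pi e s) + O(s^{-1/4})$ that drive the result. So there is no in-paper proof to compare against; what you have written is a blind reconstruction of the cited argument. As such it is essentially right: Poissonization decouples the coordinates, each coordinate is Skellam with parameter $\mu = t/(2k)$, and the entropy tensorizes as $H(W(t)) = k\,h(t/(2k))$ in your normalization (which agrees with the paper's $k\,h(t/k)$ since your $h(\mu)$ is the paper's $h(2\mu)$). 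Your large-$\mu$ constant $\tfrac12\log(4\pi e\mu)$ and small-$\mu$ estimate $2\mu\log(1/\mu)+O(\mu)$ are both correct; the inversions yield the stated formulas, and the intermediate regime does follow once one has continuity and strict monotonicity of $h$ (the latter is taken from \cite[Prop.~A.10]{hermon2018supplementary} in the present paper, so your ``differentiate under the sum'' claim would need its own justification, e.g.\ dominated convergence for the termwise derivatives). Two small points: first, in the regime $k\gg\log N$ you write only $t_{\mathrm{ent}} \asymp \log N/\log(k/\log N)$, but the proposition asserts $\eqsim$; your own estimate $H(W(t)) = t\log(k/t)(1+o(1))$ in fact gives $\eqsim$ after the single iteration, because the relative correction $\log\log(k/\log N)/\log(k/\log N)$ is $o(1)$ precisely when $k/\log N\to\infty$ — so this is an understatement, not a gap. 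Second, the pointwise Bessel asymptotic $I_n(2\mu)\sim e^{2\mu}/\sqrt{4\pi\mu}$ is not uniform in $n$, so the LCLT step must be done with an error bound that is integrable against the Gaussian tail (your suggested truncation at $|n|\lesssim\sqrt{\mu}\log\mu$ together with a uniform LCLT remainder does the job); this is the genuine technical work and you have correctly identified it.
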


\medskip
Additional challenges arise when the underlying group is non-Abelian. For nilpotent groups, analyzing the auxiliary process $W(t)$ alone is insufficient, and one must consider an additional auxiliary process that captures the pairwise relative orders, see \cite{hermon2024cutoff}. In our study of dihedral groups, the auxiliary process $\C(t)$ identified in \eqref{C_a} is not a simple random walk on a high-dimensional lattice; rather, it exhibits weak dependence across coordinates, which considerably complicates the analysis of its entropic properties.

The entropic analysis of the auxiliary process $\C(t)$ in Section~\ref{sec:entropy_auxiliary_process} forms the technical core of this paper and underlies our characterization of the mixing time in \eqref{cutoff_time}.
At the heart of our work is the analysis of the entropy evolution of a continuous-time process $(Y_t)_{t\in\mathbb{R}_+}$ on $\ZZ^d$, which jumps at rate $1$. Its discrete-time skeleton at the jump times, $(Y_n)_{n \ge 0}$, satisfies
\[
Y_n = -Y_{n-1} + \delta_n = \sum_{i=1}^n (-1)^{n-i} \delta_i \qquad \text{for $n\ge 1$},
\]
with $Y_0 = 0$ and $\{\delta_n\}_{n \ge 1}$ i.i.d.\ uniform on $\{e_i : i \in [d]\}$. Here, the dimension $d$ is taken to be of the same order as $k$, and $n$ corresponds to the number of jumps up to a given time $t$.
It is straightforward to see that $Y_n$ has the law of the difference of two multinomial random variables on $\mathbb{Z}^d$, and is therefore dependent across coordinates. 

In Proposition~\ref{entropy_approximation}, we develop a comparison argument with simple random walk on $\mathbb{Z}^d$ to derive sharp asymptotics of the entropy of $Y_n$, which, to the best of our knowledge, is a novel result and can be applied more generally to processes that ``resemble'' directed or undirected random walks while exhibiting some dependence across coordinates. Different methods are employed to establish entropic concentration of $Y_t$ across various regimes of $d$, which connect to other questions of interest, such as the entropic concentration of multivariate normal random variables \cite{BobkovMadiman2011} and  the Chung--Diaconis--Graham process \cite{eberhard2021mixing}.

\subsection{Literature on the universality of cutoff}\label{universality_of_cutoff} 
The Aldous--Diaconis universality conjecture (Conjecture~\ref{universality_cutoff}) was first verified in the setting of Abelian groups. 
Cutoff was initially established for $k \ge \lfloor (\log |G|)^\alpha \rfloor$ with $\alpha > 1$ by Dou and Hildebrand~\cite{dou1996enumeration,dou1992studies}, 
and was later extended to the full regime $1 \ll \log k \ll \log |G|$ by Hermon and Olesker-Taylor~\cite{hermon2021cutoff}, which is both necessary and sufficient for cutoff to occur. 
For a detailed account of the progress on Abelian groups, we refer the interested reader to~\cite[\S1.3.1]{hermon2021cutoff}.

Beyond Abelian groups, Hermon and Olesker-Taylor \cite{hermon2019cutoff} studied random walks on the unit upper-triangular matrix groups, a canonical example of nilpotent groups, and established cutoff in the same extended regime $1 \ll \log k \ll \log |G|$ with precise mixing times. Their conclusion was subsequently generalized by Hermon and Huang \cite{hermon2024cutoff}, where  the universality conjecture is extended to general nilpotent groups for $1 \ll \log k \ll \log |G|$, under certain regularity assumptions on the nilpotency class and rank. In particular, \cite{hermon2024cutoff} provides a characterization of the mixing time in terms of the projected walk onto the abelianization, showing that the mixing behavior on the nilpotent group is largely determined by its projected walk onto the abelianization.

Dou and Hildebrand \cite{dou1996enumeration} outlined a proof of cutoff for a certain class of groups under the choice $k = \lfloor (\log |G|)^\alpha \rfloor$ with $\alpha > 1$; see Assumption~1 and Theorem~4 of \cite{dou1996enumeration}. This class includes virtually Abelian groups and thus dihedral groups. 
In working through the argument, we observed that several steps were only sketched, and these require further development to yield a complete proof. Hence, in Section~\ref{sec:upper_regime}, we present a complete proof of Theorem~\ref{cutoff_virtually_abelian} and refine the analysis to cover the entire regime $k \gg \log |G|$.

\medskip
The setting in which the generating set is fixed has also received considerable attention, see e.g. \cite{stong1995random, pak2000two,peres2013mixing,ganguly2019upper, nestoridi2020random, diaconis1994moderate, diaconis2021random,hough2017mixing}, although cutoff in this case remains far less understood. Hough~\cite{hough2017mixing} proved general bounds on the mixing time on cyclic groups $\ZZ_p$ through an elegant connection with lattice theory. For nilpotent groups, various results indicate that the mixing time is largely governed by the Abelianization. Diaconis and Hough~\cite{diaconis2021random} developed a general framework for proving central limit theorems for random walks on unipotent matrix groups driven by a probability measure $\mu$, applicable to broad choices of generators. For the $d\times d$ unit upper-triangular group $\mathbb{U}_d(p)$ over $\ZZ_p$, they showed that the coordinate on the $k$-th diagonal mixes in order $p^{2/k}$ steps, highlighting the dominant role of the Abelianization (the first super-diagonal) in the overall mixing. Nestoridi and Sly~\cite{nestoridi2020random} investigated the mixing behavior of the random walk on $\mathbb{U}_d(p)$ with the canonical generating set ${I \pm E_{i,i+1} : i \in [d-1]}$. They showed that the mixing time is bounded by $O(p^2 d \log d + d^2 p^{o(1)})$, where the first term in the bound corresponds to the mixing behavior of the projected walk on the Abelianization. For nilpotent groups of bounded nilpotency class and rank, Hermon and Huang~\cite{hermon2024cutoff} showed that the mixing behavior of the random walk is essentially determined by its projection onto the Abelianization, and that cutoff occurs precisely when the projected walk exhibits cutoff.

\medskip
We begin in Section~\ref{sec:preliminaries} with the necessary preliminaries, and proceed in Section~\ref{sec:entropy_auxiliary_process} to develop the entropic analysis that forms the core of the proof. The mixing-time proof is divided into lower and upper bounds, and further into regimes of $k$ that require distinct techniques; these are addressed in Sections~\ref{sec:lower_regime} and~\ref{sec:upper_regime}.

\section{Preliminaries}\label{sec:preliminaries}

We consider the dihedral group $G=G^{(n)}$ of size $2n$ with prime $n$, which is characterized by
$$G=\langle r,s :r^n=\id, s^2=\id, srs= r^{-1}\rangle,$$
where $r$ can be interpreted as the counterclockwise rotation by $2\pi /n$ and $s$ as the reflection across the horizontal axis. An element $g\in G$ is said to be a \textit{reflection} if $g^2=\id$. That is, the dihedral group $G$ can be viewed as a group  of rotations and reflections. 

Note that $G$ is not nilpotent when $n$ is an odd prime, 
since a dihedral group of order $2n$ is nilpotent if and only if $n$ is a power of $2$. 
A group is said to be virtually Abelian if it has an Abelian subgroup of finite index (independent of the size of the group). 
Hence $G$ is virtually Abelian with an Abelian subgroup $\ZZ_n$ of index $2$.

Let $\mathcal{S} = \{ Z^{\pm 1}_a : a \in [k] \} \subseteq G$ be a set of generators, where 
$Z_1, \dots, Z_k \overset{\mathrm{iid}}{\sim} \mathrm{Unif}(G)$. Note that $G$ is generated by $\mathcal{S}$ provided that $\mathcal{S}$ contains at least one rotation and one reflection, which occurs with high probability.
We will refer to
\[
\mathcal{S} \cap \{ r^i : i \in \mathbb{Z}_n \} 
\quad \text{and} \quad 
\mathcal{S} \cap \{ s r^i : i \in \mathbb{Z}_n \}
\]
as the subsets of generators consisting of rotations and reflections, respectively. Throughout this paper we will write 
$$k_\SS:=| \mathcal{S} \cap \{ r^i : i \in \ZZ_n \}| \quad \text{ and }\quad k_\RR:=| \mathcal{S} \cap \{ s r^i : i \in \ZZ_n \}|=k-k_\SS.$$
As $\mathcal{S}$ is a set of $k$ i.i.d. uniform generators, we have $k_\SS \sim \mathrm{Binomial}(k, 1/2)$. 
We will first reveal the values of $k_\SS$ and $k_\RR$ and treat them as known constants throughout our proofs. 

The following assumption holds with high probability and will be imposed throughout our discussion.

\begin{assumption}\label{assump_k_S}
We assume that
\[
\rho_\SS :=k_\SS/k \in [1/4, 3/4] 
\quad \text{and} \quad
\rho_\RR :=k_\RR/k \in [1/4, 3/4].
\]
\end{assumption}
The purpose of this assumption is simply to guarantee that both $k_\SS$ and $k_\RR$ are of the same order as $k$, a condition that we will take as default in what follows.

\mn
\textbf{Notation.} Throughout the paper, we use standard asymptotic notation: ``$\ll$'' or ``$o(\cdot)$'' means ``of smaller order''; ``$\lesssim$'' or ``$O(\cdot)$'' means ``of order at most''; ``$\asymp$'' means ``of the same order''; ``$\eqsim$'' means ``asymptotically equivalent,'' i.e., $f(x) \eqsim g(x)$ if $\lim_{x\to+\infty} f(x)/g(x) = 1$. 

For $n \in \mathbb{N}$, we write $[n] := \{1, \dots, n\}$.  
With slight abuse of notation, for a vector $x \in \mathbb{R}^d$, we let $[x]$ denote the nearest integer point in $\mathbb{Z}^d$.

\subsection{Representation of the random walk}

Writing $\mathcal{S}=\{Z_a^{\pm 1} : a \in [k]\}$, and conditioning on the values of $(k_\SS, k_\RR)$, each generator can be represented as follows:
\beq\label{gen_rep}
Z_a=
\begin{cases}
sr^{U_a} & \text{ for }1\leq a\leq k_\SS\\
r^{U_a}& \text{ for } k_\SS<a\leq k,
\end{cases}
\eeq
where $\{U_a\}_{a\in [k]}$ are  i.i.d. uniform over $\ZZ_n$.

Let $X := X(t)$ denote the continuous-time rate-1 random walk on the Cayley graph $\Cay(G, \mathcal{S})$, where, at rate 1, a generator is chosen uniformly from $\mathcal{S}$ and applied to the right of the current position. Let $N := N(t)$ denote the total number of steps taken by $X$ up to time $t$. For $1 \le i \le N$, we may write the $i$-th step of $X$ as $(Z_{\sigma_i})^{\eta_i}$,
where
\[
\sigma_i \;\overset{\mathrm{iid}}{\sim}\; \Unif([k]) 
\quad \text{and} \quad 
\eta_i \;\overset{\mathrm{iid}}{\sim}\; \Unif\{\pm 1\},
\qquad \text{ for }i\in\mathbb{N}.
\]
Consequently, the random walk admits the representation
\begin{equation}\label{seq_X}
    X \;=\; \prod_{i=1}^{N} (Z_{\sigma_i})^{\eta_i}.
\end{equation}
In what follows, we often refer to \eqref{seq_X} as the sequence $X$, to emphasize its interpretation as an ordered product of generators.

To obtain a simplified representation of $X$, we rearrange the sequence in \eqref{seq_X} so that all reflections appear to the left and all rotations to the right. The goal is to express $X$ as  
\begin{equation}\label{reflection&rotation}
    X \;=\; (X|_\SS)(X|_\RR),
\end{equation}
where $X|_\SS$ denotes the part of the rearranged sequence consisting of reflections, and $X|_\RR$ that consisting of rotations.

To this end, we traverse the sequence in \eqref{seq_X} from left to right, exchanging only the positions of a reflection and a rotation. That is, for each rotation $r$ and reflection $s$ in the sequence, we perform the rearrangement
\begin{equation}\label{rearrangement}
    rs = sr^{-1},
\end{equation}
which shifts the reflection $s$ to the left. For instance, the sequence $r_1 s_2 r_3 s_4$ is rearranged as follows:
\[
r_1 s_2 r_3 s_4 
\;\;\longrightarrow\;\; 
s_2 r_1^{-1} r_3 s_4 
\;\;\longrightarrow\;\; 
s_2 s_4 \bigl( r_1^{-1} r_3 \bigr)^{-1} 
\;=\; s_2 s_4 r_1 r_3^{-1},
\]
where the last equality follows because rotations commute.

Observe that, since the relative order of reflections is never changed, $X|_\SS$ coincides precisely with the subsequence of $X$ consisting of generators in $\{Z_a^{\pm 1} : a \in [k_\SS]\}$, i.e., the reflections (see \eqref{gen_rep}). Hence, letting $N_\SS=N_\SS(t)$ denote the total number of arrivals of generators in $\{Z_a^{\pm 1}: a\in [k_\SS]\}$ by time $t$, we can write 
\beq\label{X_S}
X|_\SS=\prod_{i=1}^{N_\SS} (Z_{\tilde\sigma_i})^{\tilde\eta_i}=\prod_{i=1}^{N_\SS}Z_{\tilde\sigma_i}
\eeq
where $(\tilde\sigma_i,\tilde\eta_i)\in [k_\SS]\times \{\pm 1\}$ denotes the index and sign of the $i$-th generator in the subsequence $X|_{\SS}$, and the last equality follows from the fact that a reflection is equal to its inverse. 

To obtain an expression for $X|_\RR$, we also need to account for the effect of the rearrangement on the rotations. 
A precise derivation is given in Proposition~\ref{simplified_X}, which represents the walk $X$ using an auxiliary process.

Recall that  $N(t)$ denotes the total number of generator arrivals by time $t$, and $N_\SS(t)$ denote the number of arrivals of generators in $\{Z_a^{\pm 1} : a \in [k_\SS]\}$ by time $t$.

\begin{prop}\label{simplified_X}
Let $\{(\tilde\sigma_i, \tilde\eta_i)\}_{i \ge 1}$ be as in \eqref{X_S}. 
Define the auxiliary process $\C(t) = (C_a(t))_{a \in [k]}$ by
\beq\label{C_a}
C_a(t) =
\begin{cases}
(-1)^{N_\SS(t) \bmod 2} \sum_{i=1}^{N_\SS(t)} (-1)^i \1\{\tilde\sigma_i = a\}, & 1\leq a \leq k_\SS,\\[1mm]
\displaystyle \sum_{i=1}^{N(t)} \1\{\sigma_i = a\} \eta_i \, (-1)^{\sum_{j>i} \1\{\sigma_j \in [k_\SS]\}}, & k_\SS < a \le k.
\end{cases}
\eeq
Then the random walk $X(t)$ can be expressed as
\[
X(t) = s^{N_\SS(t) \bmod 2} \, r^{\sum_{a \in [k]} C_a(t) U_a}.
\]
\end{prop}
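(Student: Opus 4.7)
The proof is essentially a calculation in the dihedral group: the goal is to bring the ordered product $X(t) = \prod_{i=1}^{N(t)} (Z_{\sigma_i})^{\eta_i}$ into canonical form $s^{\alpha} r^{\beta}$, read off $\alpha = N_\SS(t) \bmod 2$, and then identify $\beta = \sum_a C_a(t) U_a$. The only relations needed are $s^2 = \id$ and $srs = r^{-1}$, equivalently $r^\beta s = sr^{-\beta}$, together with the fact that every reflection $sr^u$ is an involution so that $(sr^u)^{\pm 1} = sr^u$.

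First I would establish, by induction on $N$, the following canonical form. For any sequence $W_1, \ldots, W_N$ with each $W_i \in \{r^{v_i},\, sr^{u_i}\}$,
\begin{equation*}
W_1 W_2 \cdots W_N \;=\; s^{m \bmod 2} \, r^{\beta},
\qquad
\beta \;=\; \sum_{i=1}^N c_i (-1)^{m_i},
\end{equation*}
where $m$ counts the reflection-type terms, $m_i := |\{j > i : W_j \text{ is a reflection}\}|$, and $c_i = u_i$ or $v_i$ according as $W_i$ is a reflection or a rotation. The induction step uses two state-update rules: right-multiplying $s^\alpha r^\beta$ by $r^v$ sends $(\alpha,\beta) \mapsto (\alpha,\beta+v)$, while right-multiplying by $sr^u$ sends $(\alpha,\beta) \mapsto (\alpha+1,\, u - \beta)$ via $r^\beta s = sr^{-\beta}$. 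Thus each newly appended reflection flips the sign of every previously accumulated contribution, producing precisely the factor $(-1)^{m_i}$ attached to term $i$.

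Specializing to $W_i = (Z_{\sigma_i})^{\eta_i}$, reflection-type terms ($\sigma_i \in [k_\SS]$) contribute $c_i = U_{\sigma_i}$, since $(sr^{U_{\sigma_i}})^{\pm 1} = sr^{U_{\sigma_i}}$, while rotation-type terms ($\sigma_i > k_\SS$) contribute $c_i = \eta_i U_{\sigma_i}$; the total reflection count is $m = N_\SS(t)$, yielding the prefactor $s^{N_\SS(t) \bmod 2}$. Grouping the contributions by generator index $a$, the exponent of $r$ becomes $\sum_{a=1}^k C_a(t)\, U_a$, with $C_a(t)$ given, for $a > k_\SS$, exactly by the second branch of~\eqref{C_a}, since $m_i = \sum_{j>i} \1\{\sigma_j \in [k_\SS]\}$.

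Finally, to match the first branch of~\eqref{C_a} when $a \in [k_\SS]$, I would observe that the $i$-th reflection in the $\tilde\sigma$-ordering has $N_\SS(t) - i$ reflections after it in the original sequence, so its sign is $(-1)^{m_i} = (-1)^{N_\SS(t) - i} = (-1)^{N_\SS(t) \bmod 2} (-1)^{i}$. Factoring out the leading sign yields the alternating sum displayed in~\eqref{C_a}. The argument is largely bookkeeping; the only subtlety worth anticipating is aligning the sign conventions that arise naturally from the canonical-form reduction with the precise form in which $C_a(t)$ is presented in~\eqref{C_a}.
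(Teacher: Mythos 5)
Your proof is correct. You take a slightly different organizational route from the paper: you prove a single canonical-form lemma $W_1\cdots W_N = s^{m\bmod 2}r^{\beta}$ by induction on $N$, with the update rule $(\alpha,\beta)\mapsto(\alpha+1,\,u-\beta)$ for appending a reflection encoding the sign flip, and then read off both branches of~\eqref{C_a} from the single formula $\beta=\sum_i c_i(-1)^{m_i}$. The paper instead performs an explicit two-stage rearrangement $X=(X|_\SS)(X|_\RR)$, commuting reflections to the left, and computes the reflection factor via $(sr^a)(sr^b)=r^{b-a}$ and the rotation factor by tracking how many reflections each rotation passes. The two arguments are driven by the same relation $r^\beta s = sr^{-\beta}$ and produce the same bookkeeping, but your unified induction avoids having to separately reason about the rearranged subsequences, which is arguably cleaner; the paper's version has the benefit of making the decomposition \eqref{reflection&rotation}, which it reuses to motivate the definition of $\C(t)$, fully explicit. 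Your alignment of the $\tilde\sigma$-ordering with the original sequence via $m_{p(i)}=N_\SS-i$ and the identity $(-1)^{N_\SS-i}=(-1)^{N_\SS\bmod 2}(-1)^i$ is exactly what is needed to match the first branch of~\eqref{C_a}.
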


\begin{proof}
For simplicity of notation, we drop the index on time. Following \eqref{reflection&rotation}, the goal is to identify the explicit expression of $X|_\SS$ and $X|_\RR$ obtained from rearranging. By \eqref{X_S} we can express $X|_\SS$ as
\beq\label{seq_X_refl}
X|_{\SS}=\prod_{i=1}^{N_\SS} Z_{\tilde\sigma_i}=\prod_{i=1}^{N_\SS}  (sr^{U_{\tilde\sigma_i}}),
\eeq
where $\tilde\sigma_i \in [k_\SS]$ denotes the index  of the $i$-th generator in $X|_\SS$, counted from the left.
Upon noting that for $a,b\in \ZZ_n$,
$$(sr^a)(sr^b)=r^{b-a},$$
we simplify the expression of \eqref{seq_X_refl} as
%$$ \prod_{i=1}^{N_\SS}  (sr^{U_{\tilde\sigma_i}})=
%\begin{cases}
%\prod_{i=1}^{N_\SS/2} r^{U_{\tilde\sigma_{2i}}-U_{\tilde\sigma_{2i-1}}}=r^{ \sum_{i=1}^{N_\SS} (-1)^{i} U_{\tilde\sigma_i}} &\text{ when $N_\SS$ is even,}\\
%sr^{U_{\tilde\sigma_1}}\prod_{i=1}^{\floor{N_\SS/2}} r^{U_{\tilde\sigma_{2i+1}}-U_{\tilde\sigma_{2i}}}=sr^{\sum_{i=1}^{N_\SS} (-1)^{i+1}U_{\tilde\sigma_i}} &\text{ when $N_\SS$ is odd.}
%\end{cases}
%$$
%Unifying the two cases gives 
$$
X|_{\SS}=s^{(N_\SS \mod 2)} r^{ (-1)^{(N_\SS\mod 2)} \sum_{i=1}^{N_\SS} (-1)^{i} U_{\tilde\sigma_i}}=s^{(N_\SS \mod 2)}r^{\sum_{a\in[k_\SS]}C_aU_a}.$$
%We see that the major part is $\sum_{i=1}^{N_\SS} (-1)^{i} U_{\tilde\sigma_i}$, where $\tilde\sigma_i\in J_1$, i.e.,
%$$\sum_{b\in J_1} \left(\sum_{i=1}^{N_\SS} (-1)^i\1\{ \tilde\sigma_i=b\}\right) U_b.$$

It remains to understand $X|_\RR$. From \eqref{rearrangement}, we see that the sign of a rotation flips each time it is exchanged with a reflection. For a rotation $(Z_{\sigma_i})^{\eta_i}$ in the sequence \eqref{seq_X}, after the rearrangement its sign becomes
\[
\eta_i (-1)^{\sum_{j>i} \1\{\sigma_j \in [k_\SS]\}}.
\]
More precisely, this effect is captured in the definition of $(C_a)_{k_\SS < a \le k}$:
\[
C_a = \sum_{i=1}^N \1\{\sigma_i = a\} \eta_i \, (-1)^{\sum_{j>i} \1\{\sigma_j \in [k_\SS]\}}, \quad\text{ for } k_\SS < a \le k.
\]
Since rotations commute with each other, it follows that
\[
X|_\RR = \prod_{k_\SS < a \le k} Z_a^{C_a} = r^{\sum_{k_\SS < a \le k} C_a U_a}.
\]
Combining $X|_\SS$ and $X|_\RR$ in \eqref{reflection&rotation} completes the proof.
\end{proof}

As indicated by Proposition~\ref{simplified_X}, the mixing behavior of the walk $X(t)$ is largely governed by its auxiliary process $\C(t)$, whose distribution plays a central role in analyzing the mixing of $X(t)$. We now proceed to characterize the law of $\C(t)$. Write $\C(t) = (\C_\SS(t), \C_\RR(t))$, where 
\[
\C_\SS(t) = (C_a(t))_{a \in [k_\SS]} \quad \text{and} \quad 
\C_\RR(t) = (C_a(t))_{k_\SS < a \le k}.
\] 
To describe the law of $\C(t)$, we introduce the following filtrations.

\begin{definition}[Filtrations]\label{filtration}
Let $\rho_\SS := k_\SS/k$ and $\rho_\RR := k_\RR/k$, which are treated as known constants.

\begin{enumerate}
\item Let $P_\SS = (P_\SS(t))_{t \ge 0}$ denote a Poisson process of rate $\rho_\SS$ that records the arrival times of generators that are reflections, without distinguishing their identities. Let 
\[
\FF_\SS(t) = \sigma\bigl( (P_\SS(s))_{0 \le s \le t} \bigr)
\] 
be the natural filtration of $P_\SS$ up to time $t$. The processes $P_\RR$ and filtrations $\FF_\RR(t)$ are defined analogously for rotations.

\item Write  $P=(P_\SS,P_\RR)$ and let $P'$ be an independent copy of $P$. We denote by $N$ (resp. $N'$) the number of arrivals in $P$ (resp. $P'$). Let $\HH$ be the $\sigma$-field generated by $P, P',  (\sigma_i, \eta_i)_ { i\in [N+N']}$, i.e., $\HH$ contains information on the sequences $X, X'$, other than the identities of $(U_a)_{a\in[k]}$ in \eqref{gen_rep}. \\
\end{enumerate}
%(ii) For each $\ell\in[L]$, let $\FF_\ell=\sigma(\{Z_{a,i}: a\in [k], 1\leq i\leq \ell\})$. Let $\FF_0$ be the trivial $\sigma$-field. \\
%(iii) For $\ell\in [L+1]$, let $\EE_{\ell}=\{ X(X')^{-1}\in G_{\ell}\}$. 
\end{definition}

For simplicity, we will often drop the explicit dependence on $t$ and write $\C_\SS = \C_\SS(t)$, $\FF_\SS = \FF_\SS(t)$, etc. We next introduce the necessary notation and characterize the law of $\C$. 

\begin{definition}[Multinomial distribution]\label{multinomial}
Fix $d\geq 2$ and $m\in\mathbb{N}$. Given $\vp\in (0,1)^{d}$  with $\| \vp \|_1:=\sum_{i=1}^{d} p_i =1$, we will denote by 
$\mathrm{Multi}_{d}(m,\vp)$ the multinomial distribution in $\ZZ^{d}$, with probability mass function defined by 
$$p_m(x)=\frac{m!}{ \prod_{i=1}^{d} x_i !}\prod_{i=1}^{d} p_i^{x_i} \quad\quad \text{ for }x \in \{y\in \ZZ_+^d: \sum_{i=1}^d y_i=m\},$$
where $\ZZ^+_d=\{x\in \ZZ^d: x_i\geq 0 \text{ for all }i\in [d]\}$.
%Throughout this section, we assume $1\ll d\ll N^{1/2}$.

\end{definition}

\begin{prop}[Conditional law of the auxiliary process]\label{auxiliary_law}
Let $\FF_\SS$ be as defined in Definition~\ref{filtration}. Then we have
\begin{enumerate}[label=(\roman*)]
\item  The process $\C_\RR$ is independent of $\C_\SS$ and $\FF_\SS$, and evolves as a continuous-time simple random walk on $\mathbb{Z}^{k_\RR}$ with rate $\rho_\RR = k_\RR/k$;
%
%\item The process $\C_\RR$ is independent from $\C_\SS$ and $\FF_\SS$; 
%\item The process $\C_\RR$ is a continuous-time simple random walk on $\ZZ^{k_\RR}$ of rate $\rho_\RR=k_\RR/k$;
\item Let $\vp = \frac{1}{k_\SS} (1,1,\dots,1) \in \mathbb{R}^{k_\SS}$, and let 
\[
\C_\SS^{\pm} \overset{\mathrm{iid}}{\sim} \mathrm{Multi}_{k_\SS}\bigl(\lfloor N_\SS/2 \rfloor, \vp\bigr), \qquad 
\C_{\err} \sim \mathrm{Multi}_{k_\SS}\bigl(\1\{N_\SS \text{ is odd}\}, \vp\bigr)
\] 
be conditionally independent multinomial random variables given $N_\SS$. The conditional law of $\C_\SS$ given $\FF_\SS$ can be written as
\[
\C_\SS \,|\, \FF_\SS = \C_\SS \,|\, N_\SS \overset{d}{=} \C_\SS^+ - \C_\SS^- + \C_{\err},
\]
where the first equality emphasizes that $\C_\SS$ depends on $\FF_\SS$ only through $N_\SS$, and $\overset{d}{=}$ denotes equality in distribution (given $N_\SS$).
\end{enumerate}
\end{prop}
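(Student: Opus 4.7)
The plan is to exploit two structural facts about the generator sequence. First, the unit-rate Poisson process of arrivals thins into independent Poisson processes $P_\SS$ and $P_\RR$ of rates $\rho_\SS$ and $\rho_\RR$. Second, the marks $(\sigma_i,\eta_i)$ are i.i.d., and conditional on each arrival being a reflection or a rotation, the chosen generator index is uniform on the corresponding block and the sign $\eta_i$ is an independent Rademacher variable. Both assertions then reduce to elementary bookkeeping, with one genuinely subtle point in part (i).

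For part (i), I would reindex along the rotation arrival times $T_1^\RR < T_2^\RR < \cdots$ and rewrite, for $a > k_\SS$,
\[
C_a \;=\; \sum_{m \ge 1}\1\{\alpha_m=a\}\,\tilde\eta_m\,(-1)^{R_m},
\]
where $\alpha_m$ is the rotation index selected at the $m$-th rotation arrival (uniform on $\{k_\SS+1,\ldots,k\}$), $\tilde\eta_m$ is its Rademacher sign, and $R_m := N_\SS - P_\SS(T_m^\RR)$ is the number of reflection arrivals strictly after $T_m^\RR$. Conditionally on $\FF_\SS$ and $P_\RR$, the exponents $R_m$ are deterministic, but since $\tilde\eta_m$ is an independent Rademacher, $\tilde\eta_m' := \tilde\eta_m(-1)^{R_m}$ is again an i.i.d.\ Rademacher sequence, independent of $\FF_\SS$. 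Hence, conditionally on $\FF_\SS$, $\C_\RR$ has the law of a rate-$\rho_\RR$ simple random walk on $\ZZ^{k_\RR}$, and since this conditional law does not depend on $\FF_\SS$, the process $\C_\RR$ is independent of $\FF_\SS$. Independence from $\C_\SS$ then follows because $\C_\SS$ is measurable with respect to $\FF_\SS$ together with $(\tilde\sigma_i)_{i\le N_\SS}$, which are independent of the data $(P_\RR,\alpha_m,\tilde\eta_m)$ driving $\C_\RR$.

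For part (ii), the identity $\C_\SS \,|\, \FF_\SS = \C_\SS \,|\, N_\SS$ is immediate from the fact that $(\tilde\sigma_i)_{i\le N_\SS}$ are i.i.d.\ uniform on $[k_\SS]$ and independent of the reflection arrival times. Splitting the alternating sum $\sum_{i=1}^{N_\SS}(-1)^i\1\{\tilde\sigma_i=a\}$ by parity of $i$ produces two count vectors over disjoint index sets, which are therefore conditionally independent multinomials with parameter $\vp$. A parity case analysis then identifies $\C_\SS^\pm$ and $\C_{\err}$: when $N_\SS$ is even, the even- and odd-indexed counts each have size $N_\SS/2 = \lfloor N_\SS/2\rfloor$, giving the two $\mathrm{Multi}_{k_\SS}(\lfloor N_\SS/2\rfloor,\vp)$ factors while $\C_{\err}\sim\mathrm{Multi}_{k_\SS}(0,\vp)$ is degenerate; when $N_\SS$ is odd, the global sign $(-1)^{N_\SS\bmod 2}=-1$ swaps the roles of the odd- and even-indexed counts, and peeling off one odd index into $\C_{\err}\sim\mathrm{Multi}_{k_\SS}(1,\vp)$ leaves two groups of size $\lfloor N_\SS/2\rfloor$ generating $\C_\SS^\pm$.

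The main obstacle is the subtle point in part (i): the sign $(-1)^{\sum_{j>i}\1\{\sigma_j\in[k_\SS]\}}$ appearing in the definition of $C_a$ for $a > k_\SS$ is $\FF_\SS$-measurable and hence not a priori independent of the reflection data, so one cannot immediately conclude that $\C_\RR$ is independent of $\FF_\SS$. The resolution is the symmetrization observation above, that the independent Rademacher $\eta_i$ absorbs any such $\FF_\SS$-measurable $\pm 1$ factor and leaves a fresh uniform sign; once this is in place, everything else is routine bookkeeping.
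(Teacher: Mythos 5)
Your proposal is correct and follows essentially the same approach as the paper's proof: part (i) rests on the same Rademacher-absorption observation (that the independent sign $\eta_i$ renders the $(-1)$-factor coming from the interleaving with reflections irrelevant to the conditional law), and part (ii) splits the alternating sum by parity into two disjoint index blocks with i.i.d.\ uniform marks, yielding the two independent multinomials plus the odd-parity remainder. Your write-up makes the reindexing and parity bookkeeping more explicit than the paper's terse version, but the ideas are identical; the only cosmetic imprecision is that the sign $(-1)^{\sum_{j>i}\1\{\sigma_j\in[k_\SS]\}}$ is $(\FF_\SS,P_\RR)$-measurable rather than $\FF_\SS$-measurable, though this has no bearing on the argument.
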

\begin{proof}
Recall from Definition \ref{C_a} that for $k_\SS < a \le k$,
\[
C_a(t) = \sum_{i=1}^{N(t)} \mathbf{1}\{\sigma_i = a\} \, \eta_i \, (-1)^{\sum_{j>i} \mathbf{1}\{\sigma_j \in [k_\SS]\}}.
\]
Each jump of $(C_a(t))_{t \ge 0}$ occurs according to an independent Poisson process of rate $1/k$. At each jump, the sign 
\[
\eta_i \, (-1)^{\sum_{j>i} \mathbf{1}\{\sigma_j \in [k_\SS]\}}
\] 
is a $\Unif(\{\pm 1\})$ random variable, independent of the rest of the dynamics (including $\{\sigma_i\}_{i \ge 1}$), since the $\{\eta_i\}_{i \ge 1}$ are independently sampled uniform signs. Consequently, $\C_\RR$ is independent of $\C_\SS$ and of $\FF_\SS$. In particular, note that for $k_\SS<a\leq k$, $C_a$ evolves as an independent simple random walk on $\ZZ$ at rate $1/k$, and hence $\C_\RR=(C_a)_{k_\SS<a\leq k}$ is a rate $\rho_\RR=k_\RR/k$ simple random walk on $\ZZ^{k_\RR}$.

To show (ii), note that for $a\in [k_\SS]$,
\begin{align*}
C_a(t)&=(-1)^{(N_\SS\mod 2)}\sum_{i=1}^{N_\SS} (-1)^i\1\{ \tilde\sigma_i=a\}\\
&=(-1)^{(N_\SS\mod 2)}\left( \sum_{i\in [N_\SS]\cap 2\ZZ}\1\{ \tilde\sigma_i=a\}-\sum_{i\in [N_\SS]\cap (2\ZZ)^c}\1\{ \tilde\sigma_i=a\}\right)
\end{align*}
where the first term in the parentheses accounts for the even-numbered arrivals, while the second term accounts for the odd-numbered arrivals. At each arrival $i\in[N_\SS]$ an index $\tilde\sigma_i\in [k_\SS]$ is chosen independently from $\FF_\SS$ and uniformly at random from $[k_\SS]$, and hence
$$\C_\SS|\FF_\SS\overset{d}{=}\C^+_{\SS}-\C^-_{\SS}+\C_{\err}.$$
In particular, $\C_\SS$ depends on $\FF_\SS$ only through the value of $N_\SS$.
\end{proof}

\section{Entropy of Auxiliary Process}\label{sec:entropy_auxiliary_process}
The goal of this section is to obtain entropic estimates for the auxiliary process $\C=(\C_\SS,\C_\RR)$. We establish asymptotics of entropy for $1 \ll k \lesssim \log|G|$ and show concentration of entropy in the following two regimes, which are treated separately due to the need for different approaches:
\begin{enumerate}[label=(\roman*)]
    \item $k \gg 1$ with $k^2 \ll |G|^{2/k}$,
    \item $k \gg |G|^{2/k} (\log k)^2$ with $k \lesssim \log|G|$.
\end{enumerate}
This section contains technical entropic estimates and may be skipped on a first reading without interrupting the flow of the main proof.

\subsection{Entropy of $\C_\RR$}
%\begin{definition}\label{entropic_time}
%(i) Let $t_0(k,N)$ be the time at which the entropy of a rate 1 simple random walk on $\ZZ^k$ is $\log N$. We refer to $t_0:=t_0(k, |G|)$ as the entropic time.\\
%(ii) Define $t_*(k,G)=\max\{ t_0(k,G), \log_k |G|\}$. We refer to $t_*(k,G)$ as the cutoff time.
%\end{definition}
%
%The asymptotics of $t_0(k,G)$ have been thoroughly analyzed in \cite{hermon2018supplementary}. 
%We collect the relevant estimates in the proposition below.
Let $\mu^\RR_t$ denote the law of $\C_\RR(t)$, and define the random entropy
\[
Q_\RR(t) := -\log \mu^\RR_t(\C_\RR(t)).
\] 
As established in Proposition~\ref{auxiliary_law}, the process $\C_\RR$ is independent of  $\C_\SS$ and evolves as a simple random walk on $\mathbb{Z}^{k_\RR}$ with rate $\rho_\RR = k_\RR/k$. The entropic properties of high-dimensional simple random walks were analyzed in~\cite{hermon2018supplementary} (see Propositions~A.9 and~A.10), and are summarized below.

Since $\C_\RR$ is a rate-$\rho_\RR$ simple random walk on $\mathbb{Z}^{k_\RR}$, its entropy is obtained by summing the entropies of $k_\RR$ i.i.d.\ rate-$1/k$ simple random walks on $\mathbb{Z}$. Hence, the entropy of $\C_\RR$ is given by 
\[
\mathbb{E}[Q_\RR(t)] = k_\RR \cdot h(t/k),
\]
where $h:\mathbb{R}_+ \to \mathbb{R}_+$ denotes the entropy of a rate-$1$ simple random walk on $\mathbb{Z}$, whose asymptotics were studied in \cite{hermon2018supplementary} and are included below for completeness. It is well known that $h$ is strictly increasing and differentiable, see e.g. \cite[Proposition A.10]{hermon2018supplementary}. 
\begin{align*}
h'(s)&=-\sum_{x\in \ZZ} \frac{d}{ds} \P(W_s=x)\log \P(W_s=x)\\
&=-\sum_{x\in \ZZ}\left( \frac{1}{2}\P(W_s=x+1)+\frac{1}{2}\P(W_s=x-1)-\P(W_s=x)\right)\log \P(W_s=x)
\end{align*}
where $W_s$ denotes a rate-1 simple random walk on $\ZZ$. 
\begin{lemma}[Proposition  A.9 \cite{hermon2018supplementary}]\label{entropy_supplementary}
Let $h:\mathbb{R}_+ \to \mathbb{R}_+$ denote the entropy of a rate-$1$ (directed or undirected) random walk on $\ZZ$.  For $s\gtrsim 1$,
\beq\label{SRW_entropy_asymptotics}
h(s)=\frac{1}{2}\log(2\pi e s)+O(s^{-1/4}).
\eeq
\end{lemma}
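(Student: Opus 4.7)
The plan is to approximate the discrete law of $W_s$ by a Gaussian of variance $s$ via a local central limit theorem, and observe that the differential entropy of such a Gaussian is exactly $\frac{1}{2}\log(2\pi e s)$. In the directed case $W_s \sim \mathrm{Poisson}(s)$ and in the undirected case $W_s$ is Skellam with parameters $(s/2, s/2)$; in both cases $\Var(W_s) = s$ and the same scheme applies.

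Write $p_s(x) := \P(W_s = x)$ and $g_s(x) := \frac{1}{\sqrt{2\pi s}}\exp\bigl(-(x - \E[W_s])^2/(2s)\bigr)$. The first step is a local CLT, obtained by a standard characteristic-function / Edgeworth expansion for the (compound) Poisson, yielding $p_s(x) = g_s(x)(1 + r_s(x))$ with $|r_s(x)| \lesssim s^{-1/2}$ uniformly over the bulk $A := \{x : |x - \E[W_s]| \le s^{3/4}\}$, together with a Chernoff-type tail bound $\P(W_s \notin A) \le e^{-c\sqrt{s}}$.

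The second step is to substitute $\log p_s = \log g_s + \log(1 + r_s)$ and $\log g_s(x) = -\tfrac{1}{2}\log(2\pi s) - (x - \E[W_s])^2/(2s)$ into the definition of entropy to obtain
$$h(s) = \tfrac{1}{2}\log(2\pi s)\sum_{x \in \ZZ} p_s(x) + \tfrac{1}{2s}\sum_{x\in\ZZ} (x - \E[W_s])^2\, p_s(x) - \sum_{x\in\ZZ} p_s(x)\log(1 + r_s(x)).$$
The first sum is $1$ and the second equals $\Var(W_s)/(2s) = 1/2$, producing the leading term $\frac{1}{2}\log(2\pi e s)$. For the remainder, on $A$ I use $|\log(1 + r_s(x))| \lesssim |r_s(x)| \lesssim s^{-1/2}$, giving an $O(s^{-1/2})$ contribution, and on $A^c$ I use that $|\log p_s(x)|$ grows only polynomially in $|x|/\sqrt{s}$ while $\P(W_s \in A^c)$ is super-polynomially small, so this part is negligible.

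The main obstacle is invoking the local CLT with enough quantitative uniformity on the bulk $A$ to guarantee $|r_s(x)| \le 1/2$ throughout $A$, so that the expansion $\log(1 + r_s) = O(|r_s|)$ is valid; this forces the cutoff to be chosen sub-linearly in $\sqrt{s}$, which the choice $|x - \E[W_s]| \le s^{3/4}$ accommodates. The resulting error bound is $O(s^{-1/2})$, comfortably stronger than the claimed $O(s^{-1/4})$, and gives the lemma with room to spare.
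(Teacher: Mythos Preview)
The paper does not itself prove this lemma; it is quoted directly from \cite[Proposition~A.9]{hermon2018supplementary}. So there is no in-paper argument to compare against, and a correct self-contained proof would be a genuine addition.

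Your strategy (local CLT plus the Gaussian differential entropy $\tfrac12\log(2\pi e s)$) is the standard route and can be made to work, but the specific parameters you commit to do not. You assert that $|r_s(x)|\lesssim s^{-1/2}$ uniformly on $A=\{|x-\E W_s|\le s^{3/4}\}$. In the directed (Poisson) case, writing $u=x-s$, a Stirling expansion gives
\[
\log\frac{p_s(x)}{g_s(x)}\;=\;\frac{u^3}{6s^2}\;-\;\frac{u}{2s}\;+\;O\!\Bigl(\frac{u^4}{s^3}\Bigr)\;+\;O\!\Bigl(\frac{u^2}{s^2}\Bigr),
\]
so at the edge $|u|=s^{3/4}$ the cubic term is $s^{1/4}/6\to\infty$, and $r_s(x)$ is unbounded there; the step $\log(1+r_s)=O(|r_s|)$ then collapses. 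In the undirected (Skellam) case the third cumulant vanishes, so the cubic term is absent, but the quartic Edgeworth correction $\frac{1}{24s}He_4(z)$ with $z=u/\sqrt{s}$ is of order $z^4/s\asymp 1$ at $|u|=s^{3/4}$, and the claim again fails. Your aside that the cutoff must be ``sub-linear in $\sqrt{s}$'' is also mis-stated: $s^{3/4}=(\sqrt s)^{3/2}$ is super-linear in $\sqrt s$, which is exactly what goes wrong.

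The repair is to shrink the bulk. Taking for instance $A=\{|u|\le s^{7/12}\}$ makes the dominant correction $u^3/s^2=O(s^{-1/4})$ (and $u^4/s^3=O(s^{-2/3})$ in the symmetric case) while the tail probability is still $\exp(-c\,s^{1/6})$ by Chernoff, so the tail entropy is negligible and the bulk remainder is $O(s^{-1/4})$, matching the lemma. You cannot, however, reach the $O(s^{-1/2})$ you advertise by a pointwise bound on $r_s$ alone: that requires exploiting the cancellation $\E[p_1((W_s-\E W_s)/\sqrt s)]=O(s^{-1/2})$ for the first Edgeworth polynomial $p_1$, i.e.\ an averaged rather than uniform estimate.
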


Let $Q^{\mathrm{SRW}}(t)$ denote the entropy of a rate-1 simple random walk on $\ZZ$. 
The varentropy $\mathrm{Var}(Q^{\mathrm{SRW}}(\cdot))$ was analyzed in Proposition~A.6 of \cite{hermon2018supplementary}. 
Upon noting that 
\[
\mathrm{Var}(Q_\RR(t)) = k_\RR \cdot \mathrm{Var}(Q^{\mathrm{SRW}}(t/k))
\]
we can obtain directly from \cite[Proposition~A.6]{hermon2018supplementary} that there exists some constant $\beta>0$ such that $ \mathrm{Var}(Q^{\mathrm{SRW}}(s))\leq \beta$ for any $s\geq 0$,  and hence
\beq\label{varentropy_ub_CR}
\mathrm{Var}(Q_\RR(t)) \leq \beta k_\RR.
\eeq

We now turn to a concentration estimate for $Q_\RR$. Related results appear in \cite[Proposition~A.3]{hermon2018supplementary} and \cite[Proposition~2.3]{hermon2019cutoff}, with \cite[Proposition~A.3]{hermon2018supplementary} establishing a central limit theorem. Since our cutoff time $t_0(k,G)$ differs from that in~\cite{hermon2018supplementary} and our approach does not rely on a full central limit theorem, we instead establish an entropic concentration result  tailored  to our setting, giving a self-contained proof for completeness.

\begin{prop}\label{CR_concentration}
Assume that  $1\ll k \lesssim \log|G|$ and $1\ll \omega \ll k$.  For arbitrary $\ep>0$,
\begin{enumerate}[label=(\roman*)]
\item If $t\leq (1-\ep)t_0(k,G)$
$$  \P(Q_\RR(t)\leq \E[Q_\RR(t_0)]-\omega)=1-o(1).$$
\item  If $t\geq (1+\ep)t_0(k,G)$. Then 
  $$\P(Q_\RR(t)\geq  \E[Q_\RR(t_0)]+\omega)=1-o(1).$$
 \end{enumerate}
\end{prop}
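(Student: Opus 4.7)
The plan is to reduce the concentration statement to a variance bound plus a deterministic lower bound of order $k$ on the gap $|\E[Q_\RR(t_0)]-\E[Q_\RR(t)]|$, and then apply Chebyshev's inequality. The crucial point is that, in the regime $1\ll k\lesssim \log|G|$, shifting $t$ by a constant factor $(1\pm\ep)$ moves the mean entropy by $\Theta(k)$, which dwarfs both the standard deviation $O(\sqrt{k})$ coming from \eqref{varentropy_ub_CR} and the prescribed tolerance $\omega\ll k$.

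First I would record that $\E[Q_\RR(t)]=k_\RR\,h(t/k)$, where $h$ is the rate-$1$ SRW entropy from Lemma \ref{entropy_supplementary}. In the regime under consideration, $t_0/k\gtrsim 1$: indeed, $t_0/k\asymp |G|^{2/k}\to\infty$ when $k\ll\log|G|$, and $t_0/k\asymp f(\lambda)$ is bounded between positive constants when $k\asymp\lambda\log|G|$. In case (i), since $h$ is non-decreasing,
\[
\E[Q_\RR(t_0)]-\E[Q_\RR(t)]\;\ge\; k_\RR\bigl[h(t_0/k)-h((1-\ep)t_0/k)\bigr].
\]
When $t_0/k\to\infty$, Lemma \ref{entropy_supplementary} gives $h(t_0/k)-h((1-\ep)t_0/k)=\tfrac12\log(1/(1-\ep))+o(1)\ge c_\ep>0$. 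When $t_0/k$ is bounded between positive constants, the strict monotonicity and continuity of $h$ on the compact interval $[(1-\ep)t_0/k,\,t_0/k]$ yields the same type of positive lower bound (depending on $\ep$ and $\lambda$). Thus in both sub-regimes, $\E[Q_\RR(t_0)]-\E[Q_\RR(t)]\ge c_\ep\,k_\RR\asymp c_\ep k$. Case (ii) is handled symmetrically, yielding $\E[Q_\RR(t)]-\E[Q_\RR(t_0)]\ge c_\ep' k$.

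The concentration piece comes from \eqref{varentropy_ub_CR}: $\mathrm{Var}(Q_\RR(t))\le \beta k_\RR\lesssim k$, so for any slowly diverging $M\to\infty$ with $M\ll\sqrt{k}$, Chebyshev gives
\[
\P\bigl(|Q_\RR(t)-\E[Q_\RR(t)]|\ge M\sqrt{k}\bigr)\le \beta/M^2=o(1).
\]
On this high-probability event, case (i) reads
\[
Q_\RR(t)\;\le\;\E[Q_\RR(t)]+M\sqrt{k}\;\le\;\E[Q_\RR(t_0)]-c_\ep k+M\sqrt{k}\;\le\;\E[Q_\RR(t_0)]-\omega,
\]
where the last inequality uses $\omega\ll k$ and $M\sqrt{k}=o(k)$; case (ii) follows identically. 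The only mildly delicate step is confirming the lower bound $h(t_0/k)-h((1-\ep)t_0/k)\ge c_\ep>0$ uniformly in the sub-regime $k\asymp\log|G|$, where the error term $O(s^{-1/4})$ in Lemma \ref{entropy_supplementary} no longer vanishes; this is not a real obstacle, since the strict monotonicity of $h$ already furnishes the required positive constant on any compact interval bounded away from zero.
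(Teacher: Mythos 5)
Your proposal is correct and follows essentially the same route as the paper's: both reduce to the variance bound $\Var(Q_\RR(t))\lesssim k$ from \eqref{varentropy_ub_CR} plus a deterministic gap $|\E[Q_\RR(t_0)]-\E[Q_\RR(t)]|\ge c_\ep k_\RR$, split into the sub-regimes $k\ll\log|G|$ (handled via the asymptotics in Lemma \ref{entropy_supplementary}) and $k\asymp\log|G|$ (handled via monotonicity of $h$ on a compact interval, which the paper phrases through the mean value theorem and $h'>0$), and then conclude by Chebyshev.
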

\begin{proof}
For simplicity, write $t_0=t_0(k,G)$. It suffices to prove (i), as the proof of (ii) is analogous. 
For $t \leq (1-\ep)t_0$, we claim that there exists a  constant $c_\ep > 0$ such that
\beq\label{difference_entropy}
\E[Q_\RR(t_0)] - \E[Q_\RR(t)] \geq c_\ep k_\RR.
\eeq
Since $\E[Q_\RR(t)]$ is monotone increasing, it is enough to verify \eqref{difference_entropy} for $t = (1-\varepsilon)t_0$. 

Note that
\[
\E[Q_\RR(t)] = k_\RR \cdot h(t/k).
\]
In the regime $k \ll \log |G|$ (which implies $k \ll t_0$), the asymptotics in \eqref{SRW_entropy_asymptotics} directly yield \eqref{difference_entropy}. 
In the regime $k \asymp \log |G|$ (which implies $k \asymp t_0$), the mean value theorem gives
\[
\E[Q_\RR(t_0)] - \E[Q_\RR(t)] = k_\RR \cdot \big(h(t_0/k) - h(t/k)\big) = k_\RR \cdot h'(s) \frac{\varepsilon t_0}{k}
\]
for some $s \in (t/k, t_0/k)$. As $h$ is strictly increasing, we have $h'(s)>0$, which establishes \eqref{difference_entropy}.

For $t \leq (1-\ep)t_0$, combining \eqref{difference_entropy} and \eqref{varentropy_ub_CR} with $1 \ll \omega \ll k$, we obtain for sufficiently large $k_\RR$,
\begin{align*}
 \P\!\left(Q_\RR(t) > \E[Q_\RR(t_0)] - \omega\right) 
 &\leq \P\!\left(Q_\RR(t) - \E[Q_\RR(t)] > \E[Q_\RR(t_0)] - \E[Q_\RR(t)] - \omega\right) \\
 &\leq \P\!\left(Q_\RR(t) - \E[Q_\RR(t)] > c_\ep k_\RR/2\right) \\
 &\leq \frac{\Var(Q_\RR(t))}{(c_\ep k_\RR/2)^2} \;=\; o(1).
\end{align*}
\end{proof}

\subsection{Entropy Asymptotics for $\C_\SS$}
In what follows, we focus on the entropy of $\C_\SS$. Proposition~\ref{auxiliary_law} shows that the conditional law of $\C_\SS$ given $N_\SS$ can be described as the difference of two independent multinomial random variables. While the entropy of a single multinomial variable is well understood through combinatorial methods (see, e.g.,~\cite{kaji2016converging}), obtaining sharp entropic estimates for the difference is substantially more intricate and seems combinatorially intractable. We will use a new comparison argument to derive the asymptotics of the entropy of $\C_\SS$.

\subsubsection{Definitions and notations}
To keep the presentation self-contained, we begin by recalling some basic definitions and results relevant to our discussion; see, e.g., \cite{cover2006elements} for further reference. For simplicity of presentation, we restrict attention to discrete random variables.

Let $X$ be a random variable on a countable space $\mathcal{X}$ with law $P_X$.  
The entropy of $X$ is denoted by  
\[
H(X) := - \sum_{x \in \mathcal{X}} P_X(x) \log P_X(x)=\E[-\log P_X(X)].
\]
For a pair of random variables $(X,Y)$ taking values in $\mathcal{X}\times\mathcal{Y}$ with joint distribution  
$P_{X,Y}(x,y) = \mathbb{P}(X=x, Y=y)$, the \emph{joint entropy} of $X$ and $Y$ is  
\[
H(X,Y) := - \sum_{x \in \mathcal{X}} \sum_{y \in \mathcal{Y}} P_{X,Y}(x,y) \log P_{X,Y}(x,y).
\]

The \emph{conditional entropy} of $X$ given $Y$ is defined as 
\[
H(X|Y) := H(X,Y) - H(Y),
\]
and the \emph{mutual information} between $X$ and $Y$ is 
\[
I(X;Y) := D_{\mathrm{KL}}\!\big(P_{X,Y}\,\|\, P_X \otimes P_Y\big),
\]
where $D_{\mathrm{KL}}$ denotes the Kullback--Leibler divergence; equivalently, we have
\[
I(X;Y) = H(X) - H(X|Y) = H(Y) - H(Y|X).
\]
It is well known that mutual information is always nonnegative.  
In particular, this implies the useful inequality $H(X) \geq H(X|Y)$.

In addition to these properties, we will use the fact that if a random variable $X$ has finite support $\mathrm{supp}(X)$, then 
\beq\label{entropy_bound_support}
H(X) \le \log \left(|\mathrm{supp}(X)|\right),
\eeq
which follows from the fact that $D_{\mathrm{KL}}(P_X\| \Unif(\mathrm{supp}(X)))\geq 0$.

\medskip
For an event $A$, with slight abuse of notation we define the entropy of $X$ conditioned on $A$ as  
\beq
H(X | A) := - \sum_x \P(X = x | A)\, \log \P(X = x | A).
\eeq
It can be readily verified from definition that 
\beq\label{entropy_equality}
H(X,\1_A)=H(\1_A)+P(A)H(X|A)+P(A^c)H(X|A^c),
\eeq
where $H(\1_A)=-\P(A)\log \P(A)-\P(A^c)\log \P(A^c)$.

The joint entropy of a pair of random variables $(X,Y)$ conditioned on an event $A$ is defined to be 
\beq\label{conditional_joint_entropy}
H(X,Y|A)=-\sum_{x,y}\P(X=x,Y=y|A)\log\P(X=x,Y=y|A).
\eeq
It is straightforward to check that 
\beq\label{joint_entropy_equality}
H(X,Y|A)=\sum_{x} \P(X=x|A)H(Y|\{X=x\}\cap A)+H(X|A).\\
\eeq
This also implies $H(X|A)\leq H(X,Y|A)$ and $H(Y|A)\leq H(X,Y|A)$.

\subsubsection{Asymptotics of entropy}\label{sec: asymptotics of entropy}
Intuitively, the process $\C_\SS$ resembles a simple random walk on $\mathbb{Z}^{k_\SS}$, with the key difference that the number of positive and negative steps differs at most by 1 (see Lemma~\ref{auxiliary_law}). This global constraint introduces correlations among the coordinates and effectively reduces the intrinsic dimension of $\C_\SS$ to $k_\SS - 1$.

As will be seen later, this dimensional reduction accounts for why mixing occurs around 
\[
t_0(k,G) \;=\; \frac{k}{2\pi e}\,|G|^{2/(k-1)}
\]
in the regime $1 \ll k \ll \log |G|$, rather than at the entropic time
$
t_{\mathrm{ent}}(k,G) \;\eqsim\; \frac{k}{2\pi e}\,|G|^{2/k}.
$

We begin by analyzing the entropy of the difference of two i.i.d.\ multinomial random variables, as stated in Proposition~\ref{entropy_approximation}.

\begin{prop}\label{entropy_approximation}
Let $h:\mathbb{R}_+ \to \mathbb{R}_+$ denote the entropy of a rate-1 simple random walk on $\ZZ$.  
Suppose $d,N\in\mathbb{N}$ are such that $1 \ll d \lesssim N$.  Let $X$ and $Y$ be independent $\mathrm{Multi}_d(N, \vp)$ random variables with uniform parameter $\vp = \tfrac{1}{d}(1,\dots,1) \in \mathbb{R}^d$.
Then the entropy of their difference satisfies
$$ H(X-Y)= d\cdot h\left(\frac{2N}{d}\right)+
\begin{cases}
 \max\{ O(dN^{-1/3}), O(d(N/d)^{-1/4}),O(\log N)\} &\text{ if $1\ll d \ll N$,}\\
 O(N^{2/3}) &\text{ if $d \asymp N$.}
\end{cases}
$$
where the $O(\cdot)$ term indicates that the difference can be either positive or negative.
\end{prop}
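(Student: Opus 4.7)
The plan is to identify $X-Y$ as the endpoint of a simple random walk on $\ZZ^d$ conditioned on a hyperplane event, and then to compute the entropy of the unconditional walk via a continuous-time embedding. Let $\zeta_1,\ldots,\zeta_{2N}$ be i.i.d.\ uniform on $\{\pm e_j:j\in[d]\}$, set $W_n:=\sum_{i\le n}\zeta_i$, and let $K$ denote the number of $i\le 2N$ for which $\zeta_i$ has a positive sign. A direct check shows that $W_{2N}\mid\{K=N\}\overset{d}{=}X-Y$. The key observation is that $K$ is actually a deterministic function of $W_{2N}$: summing over coordinates gives $\sum_j W_{2N,j}=2K-2N$, so $\{K=N\}$ coincides with the hyperplane event $\{\sum_j W_{2N,j}=0\}$.

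Write $p_k:=\P(K=k)$ and $Q_{2N}:=-\log\P(W_{2N}=W_{2N})$. Using the identity $H(W_{2N}|K=k)=\log p_k+\E[Q_{2N}\mid K=k]$, valid because $K$ is $\sigma(W_{2N})$-measurable, one obtains
\[
H(X-Y) \;=\; H(W_{2N}) \;-\; \log(1/p_N) \;+\; \bigl(\E[Q_{2N}\mid K=N]-\E[Q_{2N}]\bigr).
\]
The first correction is $\log(1/p_N)=O(\log N)$ since $K\sim\mathrm{Bin}(2N,1/2)$ has $p_N\asymp 1/\sqrt N$. The second---the ``conditional log-density bias''---is handled by a local CLT for SRW on $\ZZ^d$: one expects $-\log\P(W_{2N}=w)\approx \tfrac{d}{2}\log(4\pi N/d)+\tfrac{d}{4N}|w|^2$ plus lower-order terms, so the bias reduces to $\tfrac{d}{4N}\bigl(\E[|W_{2N}|^2\mid\sum_jW_{2N,j}=0]-\E|W_{2N}|^2\bigr)$, which by a direct covariance calculation (the conditional covariance is $(2N/d)(I-\tfrac1d\mathbf{1}\mathbf{1}^\top)$) equals $-\tfrac12+o(1)$. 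The sub-leading error in the local CLT then propagates to an $O(dN^{-1/3})$ contribution in the regime $1\ll d\ll N$.

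To evaluate $H(W_{2N})$, I embed the walk into continuous time: let $W^{(c)}(s):=W_{M(s)}$ where $M(s)\sim\mathrm{Poi}(s)$ is independent of $(\zeta_i)$. The coordinates of $W^{(c)}$ are then independent rate-$1/d$ simple random walks on $\ZZ$, so by Lemma~\ref{entropy_supplementary} one has $H(W^{(c)}(s))=d\cdot h(s/d)$. Applying the chain rule to $(W^{(c)}(s),M(s))$ gives
\[
\E[H(W_{M(s)})] \;=\; d\,h(s/d)-I(W^{(c)}(s);M(s)) \;\in\; \bigl[d\,h(s/d)-H(\mathrm{Poi}(s)),\;d\,h(s/d)\bigr],
\]
and at $s=2N$ the Poisson entropy is $O(\log N)$. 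De-Poissonizing from $\E[H(W_{M(2N)})]$ to $H(W_{2N})$ exploits the monotonicity $H(W_{n+1})\ge H(W_n)$ (which follows from $H(W_{n+1}\mid\zeta_{n+1})=H(W_n)$) together with the local-CLT smoothness estimate $|H(W_{n+1})-H(W_n)|\lesssim d/n$ for $n\gg d$; since $M(2N)$ concentrates on $2N\pm O(\sqrt N)$, this yields the $O(d(N/d)^{-1/4})$ error. Combining this with the decomposition above gives $H(X-Y)=d\,h(2N/d)+O\!\bigl(\log N+d(N/d)^{-1/4}+dN^{-1/3}\bigr)$ for $1\ll d\ll N$.

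The dense regime $d\asymp N$ is more delicate, because each coordinate of $W_{2N}$ sees only $O(1)$ jumps and a standard local CLT is inapplicable; here I would replace the local CLT step by a direct small-ball / combinatorial estimate on the joint distribution of $W_{2N}$ restricted to the hyperplane $\{\sum_jw_j=0\}$, which I expect to yield the coarser $O(N^{2/3})$ bound. The main obstacle throughout, and the source of all three error terms in the statement, is obtaining a dimension-sensitive local CLT for $W_{2N}$ on $\ZZ^d$ that is sharp enough both in the bulk and on the fixed hyperplane, with $d$ allowed to grow with $N$.
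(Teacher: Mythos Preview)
Your hyperplane identity is correct and elegant: since $K=N+\tfrac12\sum_jW_{2N,j}$ is $\sigma(W_{2N})$-measurable, one does have $H(X-Y)=H(W_{2N}\mid K=N)=H(W_{2N})-\log(1/p_N)+\bigl(\E[Q_{2N}\mid K=N]-\E[Q_{2N}]\bigr)$, and $\log(1/p_N)=O(\log N)$ is immediate. But the rest of the argument rests on a dimension-growing local CLT for $W_{2N}$ that you never establish. You invoke it twice---once to reduce the bias term to $\tfrac{d}{4N}\bigl(\E[|W_{2N}|^2\mid K=N]-\E[|W_{2N}|^2]\bigr)$, and again to get the smoothness bound $|H(W_{n+1})-H(W_n)|\lesssim d/n$ needed for de-Poissonization---and in both places you simply assert the resulting error (``propagates to $O(dN^{-1/3})$'', ``yields the $O(d(N/d)^{-1/4})$ error'') without derivation. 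The $N^{-1/3}$ exponent in particular has no visible source in your outline; standard Edgeworth corrections would give different powers. And the $d\asymp N$ regime is not proved at all: ``I would replace \ldots\ which I expect to yield $O(N^{2/3})$'' is a hope, not an argument, and the local-CLT route genuinely collapses there since each coordinate sees $O(1)$ jumps.

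The paper bypasses all of this with a \emph{Poisson sandwich} that never touches a local CLT. Take $N_X,N_Y$ i.i.d.\ $\mathrm{Poisson}(N(1+N^{-1/3}))$ and set $W=\mathrm{Multi}_d(N_X,\vp)-\mathrm{Multi}_d(N_Y,\vp)$; by Poisson thinning $W$ is \emph{exactly} a continuous-time SRW on $\ZZ^d$ at time $2N(1+N^{-1/3})$, so $H(W)=d\cdot h\bigl(2N(1+N^{-1/3})/d\bigr)$ on the nose. On the high-probability event $A=\{N_X>N,\,N_Y>N\}$ one can write $W\overset{d}{=}(X-Y)+(\text{independent extra multinomial noise})$, and a short conditioning argument on $\1_A$ gives $H(X-Y)\le (1-\P(A^c))^{-1}(H(W)+1)$. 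The matching lower bound uses $\mathrm{Poisson}(N(1-N^{-1/3}))$ and the reverse coupling. The $N^{-1/3}$ is just the chosen Poisson shift, balanced so that $\P(A^c)\le 2e^{-N^{1/3}/4}$; the $(N/d)^{-1/4}$ comes straight from the known one-dimensional asymptotic of $h$ in Lemma~\ref{entropy_supplementary}; and because the comparison is soft (only monotonicity of entropy under independent additive noise, plus Chernoff), it works uniformly for $d\lesssim N$, including $d\asymp N$, where the mean-value theorem on $h$ gives the $O(N^{2/3})$ error. Your decomposition could perhaps be completed, but it would require proving the very local-CLT input that the paper's method is designed to avoid.
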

\begin{remark}
In the regime $d \asymp N$, even for $\mathrm{Multi}_d(N,\vp)$, estimating the entropy via combinatorial methods is challenging; see, for example, Corollary 4.2 of \cite{kaji2016converging}, which requires $N \ge 2d$. 
Our result applies in the regime $d \asymp N$ and can be adapted to $\mathrm{Multi}_d(N,\vp)$, thereby extending the  result in \cite{kaji2016converging}.
\end{remark}

\begin{proof}
We begin with the proof of upper bound, which uses a comparison argument. Let 
$N_X, N_Y$ be independent $\mathrm{Poisson}(N(1 + N^{-1/3}))$ random variables, and define
\[
W := \mathrm{Multi}_{d}\!\left(N_X, \vp\right) 
   - \mathrm{Multi}_{d}\!\left(N_Y, \vp\right),
\]
where the two multinomial variables are independent. By a standard Poissonization argument, each coordinate of $W$ is the difference of independent Poisson variables, so $W$ has the law of a rate-$1$ simple random walk on $\ZZ^d$ run for time $2N(1+N^{-1/3})$. We will first show, via a comparison argument, that for sufficiently large $N$,
\begin{equation}\label{entropy_comparison_ub}
H(X-Y) \le \left(1 - 2 e^{-N^{1/3}/4}\right)^{-1}( H(W) + 1)\leq (1+4e^{-N^{1/3}/4})H(W)+2.
\end{equation}

Define the event $A := \{N_X > N,\, N_Y > N\}$. Conditioned on $(N_X, N_Y)$, further define
\[
R_X \sim \mathrm{Multi}_{d}\left((N_X - N)\1_A,\, \vp\right), 
\qquad
R_Y \sim \mathrm{Multi}_{d}\left((N_Y - N)\1_A,\, \vp\right),
\]
which are independent of each other given $(N_X, N_Y)$, and jointly independent of $X$ and $Y$.

Let 
\[
Z := X - Y + R_X - R_Y.
\]
Since $R_X-R_Y$ is independent of $X-Y$, we have $H(Z)\geq H(Z | R_X-R_Y)=H(X-Y).$
Moreover, we observe that
\begin{itemize}
\item The conditional law of $Z$ given $A$ coincides with that of $W$ given $A$. It follows that
$
H(Z | A) = H(W | A).
$
\item The conditional law of $Z$ given $A^c$ coincides with the (unconditional) law of $X-Y$, since $X-Y$ is independent of $A^c$. Hence,
$
H(Z | A^c) = H(X-Y).
$
\end{itemize}

Hence, applying the equality in \eqref{entropy_equality}, we obtain
\begin{align*}
H(X-Y) &\leq H(Z) \leq H(Z;\1_A) \\
&= H(\1_A) + \P(A)\,H(Z | A) + \P(A^c)\,H(Z | A^c) \\
&\leq H(\1_A) + \P(A)\,H(W | A) + \P(A^c)\,H(X-Y) \\
&\leq H(\1_A) +H(W) + \P(A^c)\,H(X-Y),
\end{align*}
where the  fourth line follows upon noting $H(\1_A)+\P(A)H(W| A)\leq H(W,\1_A)\leq H(\1_A) +H(W)$.
Rearranging gives
\beq\label{comparison_ub_1}
H(X-Y) \leq (1-\P(A^c))^{-1}\left(H(W) + H(\1_A)\right).
\eeq
It follows from a Chernoff bound that $\P(A^c) \leq 2\,\P(N_X \leq N) \leq 2 \exp(- N^{1/3}/4)$.
Thus, for sufficiently large $N$, the entropy of the indicator satisfies $H(\1_A) = -\P(A)\log \P(A) - \P(A^c)\log \P(A^c) \leq 1$. Combining these estimates with \eqref{comparison_ub_1} gives the upper bound stated in \eqref{entropy_comparison_ub}.

It remains to understand $H(W)$.
Since $W$ has the law of a rate-$1$ simple random walk on $\ZZ^{d}$ run for time $2N(1+N^{-1/3})$, with each coordinate evolving independently, we immediately have
\[
H(W) =d\cdot h\left(\frac{2N(1+N^{-1/3})}{d}\right).
\]
For $1 \ll d \ll N$, substituting the asymptotics from \eqref{SRW_entropy_asymptotics} gives
$$
h\left(\frac{2N(1+N^{-1/3})}{d}\right)
= h\left(\frac{2N}{d}\right) + \tfrac12 \log(1+N^{-1/3}) + O((N/d)^{-1/4}).
$$
If $d \eqsim \lambda N$ for some $\lambda>0$, the mean value theorem implies
$$
h\left(\frac{2N(1+N^{-1/3})}{d}\right)
= h\left(\frac{2N}{d}\right) +\frac{2N^{2/3}}{d}h'(s)
$$
for some $s\in \left( \frac{2N}{d}, \frac{2N(1+N^{-1/3})}{d}\right)$ which satisfies $h'(s)\in (0,\infty)$.

This upper bounds $H(W)$, i.e.,
\beq\label{lb_entropy_W}
H(W) \leq d\cdot h\left(\frac{2N}{d}\right)+
\begin{cases}
d\cdot \max\{O(N^{-1/3}), O((N/d)^{-1/4})\} & \text{ if $1\ll d \ll N$,}\\
O(N^{2/3})& \text{ if $d\asymp N$,}\\
\end{cases}
\eeq
where the $O(\cdot)$ terms are positive. This, together with \eqref{entropy_comparison_ub}, establishes the desired upper bound on $H(X-Y)$.

\bigskip

The lower bound follows by a similar approach, with differences only in certain estimates.
  Let $\widetilde N_X, \widetilde N_Y$ be independent $ \mathrm{Poisson}(N(1 - N^{-1/3}))$ random variables and define
\[
\widetilde W := \mathrm{Multi}_{d}\!\left(\widetilde N_X, \vp\right) 
   - \mathrm{Multi}_{d}\!\left(\widetilde N_Y,\vp\right),
\]
where the two multinomial variables are independent. Define the event $B = \{\widetilde N_X <N,\, \widetilde N_Y < N\}$, and let  
\[
\widetilde R_X \sim \mathrm{Multi}_{d}\left( (N-\widetilde N_X)\1_B,\, \vp\right), 
\qquad 
\widetilde R_Y \sim \mathrm{Multi}_{d}\left( (N-\widetilde N_Y)\1_B,\, \vp\right),
\]
which are independent of each other given $(N_X, N_Y)$, and jointly independent of $X$ and $Y$.

Let
\[
\widetilde Z := \widetilde W + \widetilde R_X - \widetilde R_Y.
\]
Observe that the conditional law of $\widetilde Z$ given $B$ coincides with the law of $X-Y$, while the conditional law of $\widetilde Z$ given $B^c$ coincides with the conditional law of $\widetilde W$ given $B^c$. This implies
\beq\label{conditional_laws_lb}
H(\widetilde Z | B) = H(X-Y)\quad \text{ and }\quad H(\widetilde Z | B^c) = H(\widetilde W | B^c).
\eeq
Since $\widetilde W$ and $\widetilde R_X-\widetilde R_Y$ are conditionally independent given $(\widetilde N_X,\widetilde N_Y)$, we have
\begin{align*}
H(\widetilde Z|(\widetilde N_X,\widetilde N_Y) )&=H(\widetilde W+\widetilde R_X-\widetilde R_Y|(\widetilde N_X,\widetilde N_Y))\geq H(\widetilde W| (\widetilde N_X,\widetilde N_Y))\geq H(\widetilde W)-H(\widetilde N_X,\widetilde N_Y).
\end{align*}
On the other hand, by \eqref{entropy_equality} and \eqref{conditional_laws_lb}, we have
\begin{align*}
H(\widetilde Z|(\widetilde N_X,\widetilde N_Y) )&\leq H(\widetilde Z)\leq H(\widetilde Z,\1_B)=H(\1_B)+\P(B)H(X-Y)+\P(B^c)H(\widetilde W|B^c)
\end{align*}
Combining the above inequalities and rearranging yields
\beq\label{entropy_comparison_lb}
H(X-Y)\geq \P(B)^{-1}\left( H(\widetilde W)-H(\widetilde N_X,\widetilde N_Y)-H(\1_B)-\P(B^c)H(\widetilde W|B^c)\right).
\eeq
Since $\widetilde W$ has the law of a rate-$1$ simple random walk on $\ZZ^{d}$ at time $2N(1-N^{-1/3})$, an argument analogous to \eqref{lb_entropy_W} gives the lower bound
$$
H(\widetilde W)\geq d\cdot h\left(\frac{2N}{d}\right)-
\begin{cases}
d\cdot \max\{O(N^{-1/3}), O((N/d)^{-1/4})\} & \text{ if $1\ll d \ll N$,}\\
O(N^{2/3})& \text{ if $d\asymp N$,}\\
\end{cases}
$$
where the $O(\cdot)$ terms are positive. 

We will control the remaining terms in \eqref{entropy_comparison_lb}. A Chernoff bound gives $\P(B^c) \leq 2 \exp(-N^{1/3}/4)$, which implies $H(\1_B) \leq 1$ for sufficiently large $N$.  
Treating a Poisson random variable as the position of a directed random walk, Lemma~\ref{entropy_supplementary} yields
\beq\label{poisson_entropy}
H(\widetilde N_X, \widetilde N_Y) = H(\widetilde N_X) + H(\widetilde N_Y) = 2\, h\bigl(N(1-N^{-1/3})\bigr) \leq 2 \log(2\pi e N)
\eeq
for large $N$. Thus, the desired lower bound follows once we establish the following estimate
\beq\label{residual_entropy}
\P(B^c)\, H(\widetilde W | B^c)\le 4\log N
\eeq
for sufficiently large $N$. We now verify this bound to conclude the proof.

Let $\widetilde N=(\widetilde N_X,\widetilde N_Y)$. Recall the definition of conditional joint entropy from \eqref{conditional_joint_entropy}, and it follows from \eqref{joint_entropy_equality} that
We can write
\begin{align}\label{entropy_error_bound}
\nonumber H(\widetilde W|B^c)&\leq H(\widetilde W,\widetilde N |B^c)\\
&=\sum_{(n_1,n_2): \max\{n_1,n_2\}\geq N} \P(\widetilde N=(n_1,n_2)|B^c)H(\widetilde W|\{\widetilde N=(n_1,n_2)\}\cap B^c)+H(\widetilde N|B^c)
\end{align}
Conditioned on $\widetilde N=(n_1,n_2)$, the support of $\widetilde W$ has size at most 
${n_1+d-1 \choose d-1}{n_2+d-1\choose d-1}\leq (n_1+d)^{n_1+d}(n_2+d)^{n_2+d}$,
and by \eqref{entropy_bound_support},
$$H(\widetilde W|\{\widetilde N=(n_1,n_2)\}\cap B^c)\leq \log \left((n_1+d)^{n_1+d}(n_2+d)^{n_2+d}\right).$$
It then follows that 
\begin{align*}
 &\sum_{(n_1,n_2): \max\{n_1,n_2\}\geq N} \P(\widetilde N=(n_1,n_2)|B^c)H(\widetilde W|\{\widetilde N=(n_1,n_2)\}\cap B^c)\\
\leq & \E[(\widetilde N_X+d)\log(\widetilde N_X+d)+(\widetilde N_Y+d)\log(\widetilde N_Y+d)|B^c]\\
=&2\E[(\widetilde N_X+d)\log(\widetilde N_X+d)|B^c].
\end{align*}
Applying this estimate in \eqref{entropy_error_bound} yields, when $N$ is sufficiently large,
\begin{align*}
\P(B^c)H(\widetilde W|B^c)&\leq 2\E[\1_{B^c}(\widetilde N_X+d)\log(\widetilde N_X+d) ]+\P(B^c)H(\widetilde N|B^c)\\
&\leq 2\E[ \1\{\widetilde N_X\geq N\} (\widetilde N_X+d)\log(\widetilde N_X+d)]\\
&\quad  +2\E[ \1\{\widetilde N_X< N,\widetilde N_Y\geq N\}(\widetilde N_X+d)\log(\widetilde N_X+d)]+H(\widetilde N, \1_B)\\
&\leq 2\E[ ((\widetilde N_X+d)\log(\widetilde N_X+d))^2]^{1/2}\P(\widetilde N_X\geq N)^{1/2}\\
&\quad+2(N+d)\log(N+d)\P(\widetilde N_Y\geq N)+H(\widetilde N)+H(\1_B)\\
&\leq 4(N+d)^2e^{-N^{1/3}/8}+2(N+d)^2e^{-N^{1/3}/4}+2 \log(2\pi e N)+1\\
&\leq 4\log N.
\end{align*}
The third inequality follows from the Cauchy--Schwarz inequality. The fourth inequality uses the fact that $\log x \le x$ for all $x>0$ and the deterministic bound
\[
\E\big[( (\widetilde N_X+d)\log(\widetilde N_X+d) )^2\big] \le \E\big[(\widetilde N_X + d)^4\big] \le 4 (N+d)^4
\]
for sufficiently large $N$ and $d$. Moreover, we have $H(\widetilde N) \le 2 \log(2\pi e N)$ from \eqref{poisson_entropy}, $\P(B^c) \le 2 e^{-N^{1/3}/4}$, and
\[
H(\1_B) = -\P(B)\log \P(B) - \P(B^c)\log \P(B^c) \le 1.
\]
Combining these estimates completes the proof of \eqref{residual_entropy}, and hence establishes the desired lower bound.
\end{proof}
 
 We now apply Proposition~\ref{entropy_approximation} to derive sharp entropy asymptotics for $\C_\SS$ conditioned on $N_\SS$. 
To describe the conditional law of $\C_\SS$ given $N_\SS$, we introduce the discrete-time chain $(Y_m)_{m \ge 0}$ on $\ZZ^{k_\SS}$ as follows.

Let $\{\delta_j\}_{j \ge 1}$ be i.i.d.\ uniform random elements of $\{e_i : i \in [k_\SS]\}$. Define the chain $Y_m$ starting at $Y_0 = 0$, with updates for $m \ge 1$ given by
\begin{equation}\label{CDG_process}
Y_m = -Y_{m-1} + \delta_m.
\end{equation}
It is straightforward to see from Lemma \ref{auxiliary_law} that, conditioned on $N_\SS$, there exists an almost sure coupling
\begin{equation}\label{coupling_CDG_process}
\C_\SS = Y_{N_\SS}.
\end{equation}

\begin{corollary}\label{Y_entropy}
Let $H(Y_m)$ denote the entropy of $Y_m$. Suppose $1 \ll k \lesssim m$. Then
\[
H(Y_m) =k_\SS \cdot h\left(\frac{m}{k_\SS}\right)+
\begin{cases}
 \max\{ O(km^{-1/3}), O(k(m/k)^{-1/4}),O(\log m)\} &\text{ if $1\ll k \ll m$,}\\
 O(m^{2/3}) &\text{ if $k \asymp m$,}
\end{cases}
\]
where the $O(\cdot)$ term indicates that the difference can be either positive or negative.
\end{corollary}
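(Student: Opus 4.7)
The plan is to reduce the corollary to Proposition~\ref{entropy_approximation} by expressing $Y_m$, up to sign, as the difference of two independent multinomial random variables. Unfolding the recursion in \eqref{CDG_process} yields
\[
Y_m \;=\; \sum_{i=1}^m (-1)^{m-i}\delta_i,
\]
so by splitting the indices into even and odd and recalling that each $\delta_i$ is i.i.d.\ uniform on $\{e_j : j\in[k_\SS]\}$, one obtains the identity in distribution
\[
Y_m \;\overset{d}{=}\; \pm\bigl(A_{\lceil m/2\rceil} - B_{\lfloor m/2 \rfloor}\bigr),
\qquad A_{\lceil m/2\rceil}\sim \mathrm{Multi}_{k_\SS}(\lceil m/2\rceil,\vp),\ B_{\lfloor m/2\rfloor}\sim \mathrm{Multi}_{k_\SS}(\lfloor m/2\rfloor,\vp),
\]
with $A$ and $B$ independent and $\vp=\tfrac{1}{k_\SS}(1,\dots,1)$. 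Since entropy is invariant under sign flips, we may drop the $\pm$.

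For even $m=2N$, this is exactly the setting of Proposition~\ref{entropy_approximation} with $d=k_\SS$. Applying that proposition with the choice $2N=m$ gives directly
\[
H(Y_m) \;=\; k_\SS\cdot h\!\left(\tfrac{m}{k_\SS}\right) \;+\;
\begin{cases}
\max\{O(k_\SS m^{-1/3}),O(k_\SS (m/k_\SS)^{-1/4}),O(\log m)\} & \text{if }k_\SS\ll m,\\[1mm]
O(m^{2/3}) & \text{if }k_\SS\asymp m,
\end{cases}
\]
and since $k_\SS\asymp k$ by Assumption~\ref{assump_k_S}, the error terms in $k_\SS$ coincide with those stated in $k$.

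For odd $m=2N+1$, two small discrepancies must be absorbed. First, the recursion gives $Y_{2N+1}=-Y_{2N}+\delta_{2N+1}$ with $\delta_{2N+1}$ independent of $Y_{2N}$, so the standard subadditivity/conditioning argument (using $H(X+Y)\le H(X)+H(Y)$ for independent $X,Y$) yields $|H(Y_{2N+1})-H(Y_{2N})|\le \log k_\SS$. Second, the target $k_\SS\, h(m/k_\SS)$ must be compared with $k_\SS\, h(2N/k_\SS)$; by the mean value theorem together with the asymptotic $h(s)=\tfrac12\log(2\pi e s)+O(s^{-1/4})$ from Lemma~\ref{entropy_supplementary}, one has $h'(s)=O(1/s)$ for $s\gtrsim 1$, and hence
\[
k_\SS\,\bigl|h\!\left(\tfrac{m}{k_\SS}\right)-h\!\left(\tfrac{2N}{k_\SS}\right)\bigr|\;=\;O(k_\SS/m)\;=\;O(1),
\]
which is dominated by the error terms in both regimes. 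Combining these two estimates with the even-$m$ conclusion applied to $Y_{2N}$ completes the odd case.

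The only non-routine step is the reduction identity: once one recognizes that the parity splitting turns $Y_m$ into a difference of multinomials, the rest is a direct invocation of Proposition~\ref{entropy_approximation} together with the cheap perturbation arguments above. The main obstacle is simply bookkeeping to ensure that the correction from odd $m$ (at most $\log k_\SS$ from the extra $\delta$ and $O(1)$ from the shift $2N\mapsto 2N+1$ inside $h$) is absorbed into the stated error, which holds uniformly since $\log k_\SS \le \log k \le \log m$ in the regime $k\lesssim m$, and since $O(1)$ is harmless against the $O(m^{2/3})$ bound when $k\asymp m$.
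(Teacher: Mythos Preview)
Your proof is correct and essentially the same as the paper's: both reduce to Proposition~\ref{entropy_approximation} via the parity splitting of $Y_m$ into a difference of independent multinomials, and both absorb the odd-$m$ correction by a $\log k_\SS$ bound (the paper writes this as $|H(Y_m)-H(Y^+-Y^-)|\le H(Y_{\err})\le \log k_\SS$ with $Y_{\err}\sim\mathrm{Multi}_{k_\SS}(m\bmod 2,\vp)$, which is your recursion argument in slightly different packaging). One minor caveat: the claim $h'(s)=O(1/s)$ does not follow from the asymptotic expansion in Lemma~\ref{entropy_supplementary} alone (asymptotics need not pass to derivatives), but the weaker and sufficient fact that $h'$ is bounded on $[c,\infty)$ for any $c>0$---which is all you use to get an $O(1)$ shift---follows from continuity of $h'$ together with $h'(s)\to 0$.
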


\begin{proof}
Grouping the positive and negative steps in $Y_m$ separately, one can easily see that
\[
Y_m = Y^+ - Y^- + Y_{\mathrm{err}},
\]
where $Y^\pm \sim \mathrm{Multi}_{k_\SS}(\lfloor m/2 \rfloor, \vp)$ and $Y_{\mathrm{err}} \sim \mathrm{Multi}_{k_\SS}(m \bmod 2, \vp)$ are independent multinomial random variables, with 
\(\vp = \frac{1}{k_\SS}(1,1,\dots,1) \in \mathbb{R}^{k_\SS}\).
 Hence,
\begin{align*}
H(Y^+-Y^-)+H(Y_{\err})&=H(Y^+-Y^-,Y_{\err})\geq H(Y^+-Y^-+Y_{\err})\\
&=H(Y_m)\geq H(Y_m|Y_{\err})=H(Y^+-Y^-),
\end{align*}
which implies
$$|H(Y_m)-H(Y^+-Y^-)|\leq H(Y_{\err})\leq \log k_\SS.$$
The conclusion then follows by applying Lemma~\ref{entropy_approximation} to $H(Y^+-Y^-)$
with $d = k_\SS $ and $N = \lfloor m / 2 \rfloor$.
\end{proof}

\subsection{Regime $k\gg 1$ with $k^2\ll |G|^{2/k}$}\label{sec:entropy_small_regime}
The main goal of this section is to establish the concentration of the entropy of $\C_\SS$ in the regime $k \gg 1$ with $k^2 \ll |G|^{2/k}$, as stated in Proposition~\ref{CS_entropy_concentration}. 
The proof relies on several preliminary estimates.  

We begin by presenting useful results for multivariate normal distributions in Section~\ref{multivariate_normal}, 
and then apply these results to analyze the entropy of the auxiliary process $\C_\SS$ in Section~\ref{sec:entropy_auxiliary}.
The exposition in this section is technical, since we need to carefully approximate $\C_\SS$ by multivariate normal random variables to establish its concentration within typical entropic sets. 
Along the way, we will aim to explain the purpose of each lemma and proposition to provide an intuitive guide through the technical arguments.

\bigskip

Throughout this section we will set  $d=k_\SS-1$ and let 
$$\Sigma=\Sigma_d:=\mathrm{diag}(\hat \vp)-\hat \vp \hat \vp^T \quad\quad \text{ where }  \hat \vp=\frac{1}{d+1}\cdot(1,\dots,1)  \in \mathbb{R}^d.$$ 
Let $\phi_\Sigma$ denote the density function of the $d$-dimensional $\mathrm{Normal}(0,\Sigma)$ random variable, i.e.,
\beq\label{pdf_normal}
\phi_{\Sigma}(x)=(2\pi)^{-d/2}|\Sigma|^{-1/2} \exp\left(-\frac{x^T \Sigma^{-1} x}{2}\right) \quad \text{ for }x\in \mathbb{R}^d.
\eeq
It is well known (see e.g. Theorem 1 in \cite{tanabe1992exact}) that $|\Sigma|=(d+1)^{-(d+1)}$ and 
\beq\label{Sigma_inverse}
\Sigma^{-1}=(d+1)(I+\mathbf{1}\cdot \mathbf{1}^T).
\eeq
We consider multivariate normal distributions because, in this regime, the multinomial variables $\C_\SS^{\pm}$ are well approximated by them, allowing us to study the entropic concentration of $\C_\SS$ via this approximation.

\subsubsection{Concentration of entropy for multivariate normal}\label{multivariate_normal}

For  $1\ll d\ll m^{1/2}$, let $\xi_m\sim \mathrm{Normal}_d(0,m\Sigma)$ be a $d$-dimensional multivariate normal random variable and $\phi_{\Sigma,m}$ denote its probability density function. Note from \eqref{pdf_normal} that 
$$\phi_{\Sigma,m}(x)=m^{-d/2}\phi_{\Sigma}(x/\sqrt{m}).$$
For $m\in\mathbb{R}_+$ and $\delta>0$, define
\beq\label{normal_typical_set}
\WW^{\mathrm{Normal}}_{m,\delta}:=\left\{ x\in \mathbb{R}^d: \left(\frac{2\pi e m}{(1-\delta)(d+1)}\right)^{-d/2}\leq \phi_{\Sigma,m}(x)\leq \left(\frac{2\pi e m}{(1+\delta)(d+1)}\right)^{-d/2}\right\}.
\eeq

This set plays a key role in the proofs that follow. Intuitively, it captures where $\xi_m \sim \mathrm{Normal}_d(0, m \Sigma)$ typically lies (Lemma~\ref{typical_normal}). In the regime $k \gg 1$ with $k^2 \ll |G|^{2/k}$, the auxiliary process $\C_\SS$ is well approximated by a multivariate normal, so $\WW^{\mathrm{Normal}}_{m,\delta}$ with a suitable $m$ naturally describes a typical set for $\C_\SS$, as formalized in Proposition~\ref{CS_entropy_concentration}.

We begin by showing that $\WW^{\mathrm{Normal}}_{m,\delta}$ is a typical set for $\xi_m \sim \mathrm{Normal}_d(0, m \Sigma)$.

\begin{lemma}\label{typical_normal}
Suppose $1\ll d\ll m$. Let $\xi_m\sim \mathrm{Normal}_d(0,m\Sigma)$. For any $\delta>0$, we have
$$\P(\xi_m\in \WW^{\mathrm{Normal}}_{m,\delta})=1-o(1).$$
\end{lemma}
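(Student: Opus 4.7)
The strategy is to rewrite the condition $\xi_m \in \WW^{\mathrm{Normal}}_{m,\delta}$ as a two-sided bound on the quadratic form $\xi_m^T (m\Sigma)^{-1}\xi_m$, and then invoke chi-squared concentration. Using $|\Sigma|=(d+1)^{-(d+1)}$ and the definition \eqref{pdf_normal}, a direct calculation gives
\[
-\log \phi_{\Sigma,m}(x) \;=\; \frac{d}{2}\log(2\pi m) \;-\; \frac{d+1}{2}\log(d+1) \;+\; \frac{1}{2m}\, x^T \Sigma^{-1} x.
\]
Since $\xi_m \sim \mathrm{Normal}_d(0,m\Sigma)$, the random variable $Q := \xi_m^T (m\Sigma)^{-1} \xi_m$ follows a $\chi^2_d$ distribution, and hence
\[
\mathbb{E}\bigl[-\log \phi_{\Sigma,m}(\xi_m)\bigr] \;=\; \frac{d}{2}\log\frac{2\pi e m}{d+1} \;-\; \frac{1}{2}\log(d+1), \qquad \mathrm{Var}\bigl(-\log \phi_{\Sigma,m}(\xi_m)\bigr) \;=\; \frac{d}{2}.
\]

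The next step is to match these with the set $\WW^{\mathrm{Normal}}_{m,\delta}$. Taking logarithms in \eqref{normal_typical_set}, the event $\xi_m \in \WW^{\mathrm{Normal}}_{m,\delta}$ is equivalent to
\[
\frac{d}{2}\log\frac{2\pi em}{d+1} - \frac{d}{2}\log(1+\delta) \;\leq\; -\log \phi_{\Sigma,m}(\xi_m) \;\leq\; \frac{d}{2}\log\frac{2\pi em}{d+1} + \frac{d}{2}\log\frac{1}{1-\delta}.
\]
Subtracting the mean computed above, the distance from the mean to either endpoint is
\[
\frac{d}{2}\log(1 \pm \delta)^{\mp 1} \;\pm\; \frac{1}{2}\log(d+1) \;\asymp\; \frac{d\delta}{2},
\]
for any fixed $\delta>0$, since the lower-order term $\tfrac{1}{2}\log(d+1) = o(d)$ under the assumption $d \gg 1$.

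Finally, Chebyshev's inequality yields
\[
\P\!\left(\xi_m \notin \WW^{\mathrm{Normal}}_{m,\delta}\right)
\;\leq\; \frac{\mathrm{Var}(-\log \phi_{\Sigma,m}(\xi_m))}{\bigl(\tfrac{d\delta}{2} - O(\log d)\bigr)^2}
\;\leq\; \frac{d/2}{(d\delta/2)^2 (1-o(1))} \;=\; O\!\left(\frac{1}{d\delta^2}\right) \;=\; o(1),
\]
as $d \to \infty$. This completes the proof. The only delicate point is the $\tfrac{1}{2}\log(d+1)$ gap between the mean of $-\log\phi_{\Sigma,m}(\xi_m)$ and the symmetry center of $\WW^{\mathrm{Normal}}_{m,\delta}$, but this is absorbed into the $d\delta/2$ width of the interval under the hypothesis $1 \ll d$, so Chebyshev with variance $d/2$ suffices and no sharper (e.g.\ sub-exponential) tail bound on $\chi^2_d$ is needed.
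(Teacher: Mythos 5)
Your proof is correct and reaches the same conclusion, but by a genuinely different route. The paper's proof computes the mean of the random entropy $Q(\xi_m)=-\log\phi_{\Sigma,m}(\xi_m)$ exactly as you do, but then bounds its fluctuations by invoking a general concentration theorem for log-concave measures (Theorem~1.1 of Bobkov--Madiman), which gives the exponential tail
\[
\P\bigl(|Q(\xi_m)-\E Q(\xi_m)|\geq r\sqrt d\bigr)\leq 2e^{-r/16}
\]
and hence $\P(\xi_m\notin\WW^{\mathrm{Normal}}_{m,\delta})\leq 2e^{-\delta\sqrt d/128}$. You instead exploit the explicit Gaussian structure: since $\xi_m^T(m\Sigma)^{-1}\xi_m\sim\chi^2_d$, the varentropy is exactly $d/2$, and Chebyshev already gives $O(1/(d\delta^2))=o(1)$. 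Your computation of the mean, the translation of $\WW^{\mathrm{Normal}}_{m,\delta}$ into a two-sided bound on $Q(\xi_m)$, and the observation that the asymmetric shift $\tfrac12\log(d+1)$ is $o(d)$ and hence absorbed, are all correct. The trade-off: your argument is more elementary and self-contained (no external log-concavity machinery, only the second moment of a chi-squared), but yields only a polynomially small bound rather than the sub-exponential one; for this lemma only $o(1)$ is required, so both suffice. Your approach is also more tightly tied to the Gaussian case, whereas the log-concavity argument would extend verbatim to other log-concave densities, which could be a reason the authors preferred it.
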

\begin{proof}
 Define the random entropy $Q(\xi_m):=-\log \phi_{\Sigma,m}(\xi_m)$. Note that $|m\Sigma|=m^d (d+1)^{-(d+1)}$. One can directly compute the entropy of $\xi_m$ to obtain
\begin{align*}
\E[Q(\xi_m)]&=\frac{d}{2}\log(2\pi e)+\frac{1}{2}\log|m\Sigma|=\frac{d}{2}\log(2\pi e)+\frac{d}{2}\log m-\frac{d+1}{2}\log (d+1) \\
&=\frac{d}{2}\log\left(\frac{2\pi  e m}{d+1}\right)-\frac{1}{2}\log(d+1).
\end{align*}
 As the multivariate normal density $\phi_{\Sigma,m}(\cdot)$ is log-concave on $\mathbb{R}^d$,  Theorem 1.1 in \cite{BobkovMadiman2011}  applies directly to yield a concentration bound for the random entropy.
 $$\P( |Q(\xi_m)-\E[Q(\xi_m)]|\geq r\sqrt{d})\leq 2e^{-r/16}.$$
It follows that
\begin{align*}
\P(\xi_m\notin \WW^{\mathrm{Normal}}_{m,\delta})&=\P\left(\phi_{\Sigma,m}(\xi_m)> \left(\frac{2\pi e m}{(1+\delta)(d+1)}\right)^{-d/2}\right)+\P\left(\phi_{\Sigma,m}(\xi_m)< \left(\frac{2\pi e m}{(1-\delta)(d+1)}\right)^{-d/2}\right)\\
&=\P(Q(\xi_m)<\E[Q(\xi_m)]+\frac{1}{2}\log (d+1)-\frac{d}{2}\log(1+\delta))\\
&\quad +\P(Q(\xi_m)>\E[Q(\xi_m)]+\frac{1}{2}\log (d+1)-\frac{d}{2}\log(1-\delta))\\
&\leq  \P( |Q(\xi_m)-\E[Q(\xi_m)]|> \frac{\delta d}{8})\leq 2e^{-\delta \sqrt{d}/128}=o(1).
\end{align*}

\end{proof}

As an immediate consequence of the entropy concentration, we obtain in Corollary \ref{normal_outlier} that, with high probability, $\xi_m$ lies in the bulk 
\beq\label{def_bulk_set}
\AA_{m,d}=\{ x\in \mathbb{R}^d:  \|x\|_2\leq d^{-1/2}m^{2/3}\}
\eeq
satisfying an $L^2$-norm constraint. The purpose of considering $\AA_{m,d}$ is to gain additional control over the variation of the density function $\phi_{\Sigma,m}(x)$
over small neighborhoods of $x$ and for small changes in $m$  (see Lemma~\ref{density_comparison}). Note that the technical assumption $1 \ll d^3 \ll m$ ensures that $\AA_{m,d}$ captures a typical region for $\xi_m$.

\begin{corollary}\label{normal_outlier}
Suppose $1 \ll d^3 \ll m$. 
For $\delta,\beta>0$, let $\WW^{\mathrm{Normal}}_{m,\delta}$ and $\AA_{\beta m,d}$ be defined as in~\eqref{normal_typical_set} and~\eqref{def_bulk_set}, respectively.  Then
\[
   \WW^{\mathrm{Normal}}_{m,\delta} \subseteq \AA_{\beta m,d}
   \qquad \text{for any fixed $\delta,\beta>0$.}
\]
Consequently, for $\xi_m \sim \mathrm{Normal}_d(0, m\Sigma)$ and any $\beta>0$,
$
   \P\bigl(\xi_m \in \AA_{\beta m,d}\bigr) \;=\; 1-o(1).
$

\end{corollary}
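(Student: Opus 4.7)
The plan is to prove the set inclusion $\WW^{\mathrm{Normal}}_{m,\delta} \subseteq \AA_{\beta m,d}$ by a direct computation that turns the density-based definition of the typical set into a quadratic-form bound, and then deduce the probability statement immediately from Lemma \ref{typical_normal}. The key observation is that membership in $\WW^{\mathrm{Normal}}_{m,\delta}$ forces $\|x\|_2^2 = O(m)$, which is much smaller than $d^{-1}(\beta m)^{4/3}$ whenever $d^3 \ll m$, so the $L^2$ constraint defining the bulk set $\AA_{\beta m,d}$ is automatically satisfied.

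To carry this out, I would first take $-\log$ of the lower-density bound $\phi_{\Sigma,m}(x) \geq \bigl(\tfrac{2\pi em}{(1-\delta)(d+1)}\bigr)^{-d/2}$ in \eqref{normal_typical_set}. Using the explicit Gaussian density \eqref{pdf_normal} with covariance $m\Sigma$, together with $|\Sigma| = (d+1)^{-(d+1)}$, a direct rearrangement reduces this to
$$x^T (m\Sigma)^{-1} x \;\leq\; d - d\log(1-\delta) + \log(d+1) \;=:\; R_\delta(d).$$
Next I would invoke the explicit inverse $(m\Sigma)^{-1} = \tfrac{d+1}{m}(I + \mathbf{1}\mathbf{1}^T)$ from \eqref{Sigma_inverse} to bound
$$x^T (m\Sigma)^{-1} x \;=\; \frac{d+1}{m}\bigl(\|x\|_2^2 + (\mathbf{1}^T x)^2\bigr) \;\geq\; \frac{d+1}{m}\|x\|_2^2.$$
Combining the two displays yields $\|x\|_2^2 \leq \tfrac{m R_\delta(d)}{d+1} \leq C_\delta m$ for some constant $C_\delta>0$ depending only on $\delta$.

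To conclude $x \in \AA_{\beta m,d}$, I need $\|x\|_2^2 \leq d^{-1}\beta^{4/3}m^{4/3}$. Since I have already shown $\|x\|_2^2 \leq C_\delta m$, this reduces to the requirement $C_\delta d \leq \beta^{4/3} m^{1/3}$, which is satisfied for all sufficiently large $m$ under the hypothesis $d^3 \ll m$. This establishes $\WW^{\mathrm{Normal}}_{m,\delta} \subseteq \AA_{\beta m,d}$. The probability statement then follows directly: Lemma \ref{typical_normal} (which applies since $1\ll d\ll m$) gives $\P(\xi_m \in \WW^{\mathrm{Normal}}_{m,\delta}) = 1 - o(1)$ for any fixed $\delta>0$, and monotonicity of probability under set inclusion yields $\P(\xi_m \in \AA_{\beta m,d}) = 1 - o(1)$.

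There is no real obstacle in this argument; it is essentially a careful bookkeeping of constants. The only subtlety worth flagging is that it is the \emph{lower} bound on $\phi_{\Sigma,m}$ in the definition of $\WW^{\mathrm{Normal}}_{m,\delta}$ that produces the $O(\sqrt m)$ control on $\|x\|_2$, since a small density forces $x$ to be far from the origin; the upper bound on $\phi_{\Sigma,m}$ plays no competing role. The hypothesis $d^3 \ll m$ is what allows the $O(\sqrt m)$ bound to be absorbed into the much weaker $O(d^{-1/2}m^{2/3})$ threshold defining $\AA_{\beta m,d}$.
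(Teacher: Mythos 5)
Your proof is correct. You establish the inclusion directly: starting from the lower density bound in the definition of $\WW^{\mathrm{Normal}}_{m,\delta}$, take logs to obtain the quadratic-form constraint $x^T(m\Sigma)^{-1}x \leq d - d\log(1-\delta) + \log(d+1) = O(d)$, then use $x^T\Sigma^{-1}x \geq (d+1)\|x\|_2^2$ to deduce $\|x\|_2^2 = O_\delta(m)$, and finally absorb this into the threshold $d^{-1}(\beta m)^{4/3}$ via $d^3 \ll m$. The paper proves the same inclusion by contrapositive: take $x \in \AA^c_{\beta m,d}$, note $\|x\|_2^2 > d^{-1}(\beta m)^{4/3}$ forces $x^T\Sigma^{-1}x \geq (\beta m)^{4/3}$, and plug this into the density formula to show $\phi_{\Sigma,m}(x)$ falls below the lower threshold of $\WW^{\mathrm{Normal}}_{m,\delta}$, again using $d^3 \ll m$ to dominate the polynomial prefactors. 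Both arguments use the same ingredients (the explicit inverse $\Sigma^{-1}=(d+1)(I+\mathbf{1}\mathbf{1}^T)$, the bound $x^T\Sigma^{-1}x \geq (d+1)\|x\|_2^2$, and the scale separation $d \ll m^{1/3}$), so the content is equivalent; your direct route has the minor advantage that the quadratic-form inequality $x^T(m\Sigma)^{-1}x \leq O(d)$ makes the ``$\|x\|_2$ is $O(\sqrt m)$'' consequence immediate and the $\beta$-dependence transparent, whereas the paper's contrapositive requires matching the exponential decay against a product of polynomial and geometric prefactors. Your closing observation---that only the \emph{lower} density bound in \eqref{normal_typical_set} drives the argument---is accurate and worth noting; the paper's proof implicitly uses the same fact since it only needs to show $\phi_{\Sigma,m}(x)$ drops below the lower threshold.
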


\begin{proof}
Recall from \eqref{Sigma_inverse} that  $\Sigma^{-1}=(d+1)(I+\mathbf{1}\cdot\mathbf{1}^T)$. Thus, any $x\in \AA^c_{\beta m,d}$ satisfies 
$$x^T \Sigma^{-1}x= (d+1)\left(\|x\|_2^2+  (\sum_{i=1}^d x_i)^2\right)\geq (d+1)\|x\|_2^2\geq  ( \beta m)^{4/3},$$
which implies that 
\begin{align*}
\phi_{\Sigma,m}(x)&=(2\pi m )^{-d/2}|\Sigma|^{-1/2} \exp\left( -\frac{ x^T \Sigma^{-1}x}{2m}\right)\\
&\leq  (d+1)^{1/2}e^{d/2}(1-\delta)^{-d/2}\exp(- \beta^{4/3} m^{1/3}/2)\cdot \left(\frac{2\pi e m}{(1-\delta)(d+1)}\right)^{-d/2}\\
&\leq   \left(\frac{2\pi e m}{(1-\delta)(d+1)}\right)^{-d/2}
\end{align*}
for any $\delta>0$ when $d$ is sufficiently large since $d^3\ll m$. That is, for any fixed $\delta>0$ and $\beta>0$, $\AA^c_{\beta m,d} \subseteq (\WW^{\mathrm{Normal}}_{m,\delta})^c$ when $d$ is sufficiently large.  The conclusion follows from Lemma \ref{typical_normal}.
\end{proof}

The lemma below provides a quantitative bound on how the normal density $\phi_{\Sigma,m}(x)$ varies with respect to $x$ and  $m$.

\begin{lemma}[Local stability of Normal density] \label{density_comparison}
Suppose $1\ll d^3\ll m$. For any $\eta>0$, any $x\in\AA_{m,d}=\{ x\in \mathbb{R}^d: \|x\|_2\leq d^{-1/2}m^{2/3}\}$  and  $z,z'\in [-2,2]^d$ we have
$$ \frac{\phi_{\Sigma,m}(x+z)}{\phi_{\Sigma,m}(x+z')}\leq e^{\eta d}$$
when $d$ and $m$ are sufficiently large. 

Moreover, if $m'\in [m-O(m^{7/12}), m+O(m^{7/12})]$ and $x\in \AA_{m,d}$, then for sufficiently large $d$ and $m$,
$$e^{-\eta d}\leq \frac{\phi_{\Sigma, m'}(x)}{\phi_{\Sigma, m}(x)}\leq e^{\eta d}.$$
\end{lemma}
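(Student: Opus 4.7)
The plan is to work directly with the explicit Gaussian density
\[
\phi_{\Sigma,m}(x) \;=\; (2\pi m)^{-d/2}|\Sigma|^{-1/2}\exp\!\left(-\frac{x^T\Sigma^{-1}x}{2m}\right),
\]
take logarithms, and reduce both claims to $o(d)$ estimates on quadratic forms. The key tools are the explicit structure $\Sigma^{-1}=(d+1)(I+\mathbf{1}\mathbf{1}^T)$ from \eqref{Sigma_inverse} together with the $L^2$-constraint $\|x\|_2\leq d^{-1/2}m^{2/3}$ defining $\AA_{m,d}$.

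For the first claim, I would expand
\[
\log\frac{\phi_{\Sigma,m}(x+z)}{\phi_{\Sigma,m}(x+z')} \;=\; -\frac{1}{2m}\Bigl[2\,x^T\Sigma^{-1}(z-z') + z^T\Sigma^{-1}z - z'^T\Sigma^{-1}z'\Bigr].
\]
Using $\Sigma^{-1}=(d+1)(I+\mathbf{1}\mathbf{1}^T)$ and Cauchy--Schwarz, the constraint $x\in\AA_{m,d}$ yields $|x^T(z-z')|\leq \|x\|_2\|z-z'\|_2 =O(m^{2/3})$ and $|\mathbf{1}^Tx|\leq \sqrt d\,\|x\|_2\leq m^{2/3}$, while $|\mathbf{1}^T(z-z')|=O(d)$. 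Combining gives $|x^T\Sigma^{-1}(z-z')|=O(d^2 m^{2/3})$. The pure $z$-quadratics satisfy $z^T\Sigma^{-1}z=(d+1)(\|z\|_2^2+(\mathbf{1}^Tz)^2)=O(d^3)$ and similarly for $z'$. Dividing by $2m$, the log-ratio is bounded by $O(d^2m^{-1/3})+O(d^3m^{-1})$, which is $o(d)$ since the assumption $d^3\ll m$ gives both $d/m^{1/3}=o(1)$ and $d^2/m=o(1)$; hence it is $\leq \eta d$ for large $d,m$.

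For the second claim, factor
\[
\frac{\phi_{\Sigma,m'}(x)}{\phi_{\Sigma,m}(x)} \;=\; \left(\frac{m}{m'}\right)^{d/2}\exp\!\left(-\frac{x^T\Sigma^{-1}x}{2}\left(\frac{1}{m'}-\frac{1}{m}\right)\right),
\]
and control each factor separately. Since $|m'-m|=O(m^{7/12})$, a Taylor expansion gives $(d/2)\log(m/m')=O(dm^{-5/12})$. For the exponential piece, the same structural bounds as above yield $x^T\Sigma^{-1}x\leq 2 m^{4/3}+(d+1)m^{4/3}=O(dm^{4/3})$, while $|1/m'-1/m|=O(m^{-17/12})$, so the exponent is $O(dm^{-1/12})$. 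Both contributions are $o(d)$, so the log of the ratio lies in $[-\eta d,\eta d]$ for large $d,m$, giving the two-sided bound.

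The main subtlety is that the rank-one correction $\mathbf{1}\mathbf{1}^T$ in $\Sigma^{-1}$ could, a priori, amplify the relevant quadratic forms by an extra factor of $d$; this is controlled precisely because restricting to $\AA_{m,d}$ forces $|\mathbf{1}^Tx|\leq m^{2/3}$ rather than the generic $\sqrt d\,m^{2/3}$ that would follow from $\|x\|_2\leq d^{-1/2}m^{2/3}$ alone by Cauchy--Schwarz with worst-case alignment. Beyond this one geometric observation, the remaining work is routine accounting of $o(d)$ errors under $d^3\ll m$.
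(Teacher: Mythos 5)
Your proof is correct and takes essentially the same approach as the paper's: both expand the log-ratio of Gaussian densities, use the explicit structure $\Sigma^{-1}=(d+1)(I+\mathbf{1}\mathbf{1}^T)$, and exploit the constraint $x\in\AA_{m,d}$ to control the cross term and the $(\mathbf{1}^Tx)$ contribution. The only cosmetic difference is that you work directly with the ratio $\phi_{\Sigma,m}(x+z)/\phi_{\Sigma,m}(x+z')$, whereas the paper proves the one-shift estimate $e^{-\eta d/2}\le\phi_{\Sigma,m}(x)/\phi_{\Sigma,m}(x+z)\le e^{\eta d/2}$ and composes two copies; the underlying estimates (linear-in-$x$ term of order $d^2m^{-1/3}$, pure $z$-quadratic of order $d^3/m$, both $o(d)$ under $d^3\ll m$) are the same.
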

\begin{proof}
 For the desired result it suffices to show that for any $x\in \AA_{m,d}$ and $z\in [-2,2]^d$, 
\beq\label{normal_density_comparison}
e^{-\eta d/2}\leq \frac{\phi_{\Sigma,m}(x)}{\phi_{\Sigma,m}(x+z)}\leq e^{\eta d/2}.
\eeq
Since $\Sigma$ is positive-definite, for any $z\in \mathbb{R}^d$ we have $z^T\Sigma^{-1} z\geq 0$. Then for $x\in \mathbb{R}^d$ and $z\in [-2,2]^d$,
\begin{align*}
\frac{\phi_{\Sigma,m}(x)}{\phi_{\Sigma,m}(x+z)}&=\exp\left( -\frac{1}{2} (x^T(m\Sigma)^{-1}x-(x+z)^T(m\Sigma)^{-1}(x+z))\right)\\
&=\exp\left(\frac{1}{2m}\left( 2z^T\Sigma^{-1} x-z^T\Sigma^{-1} z\right)\right)\leq \exp\left(\frac{z^T\Sigma^{-1} x}{m}\right).\end{align*}
Recall from \eqref{Sigma_inverse} that $\Sigma^{-1}=(d+1)(I+\1\cdot \1^T)$. Since $z\in [-2,2]^d$,
\begin{align*}
|z^T \Sigma^{-1} x|&=(d+1)\bigg| z^Tx+ (\sum_{i=1}^d z_i)(\sum_{i=1}^d x_i) \bigg|\leq (d+1)(2\|x\|_1+ 2d\|x\|_1)\leq 8d^2\|x\|_1.
\end{align*}
 It is easy to see that any $x\in \AA_{m,d}$ satisfies $\|x\|_1\leq d^{1/2}\|x\|_2\leq m^{2/3}$. 
Since $1\ll d^3\ll m$, when $x\in \AA_{m,d}$, 
$$\frac{\phi_{\Sigma,m}(x)}{\phi_{\Sigma,m}(x+z)}\leq \exp\left(\frac{|z^T\Sigma^{-1} x|}{m}\right)\leq \exp( 8d^2\|x\|_1/m)\leq \exp(8d^2m^{-1/3})=e^{o(d)}.$$

To prove the other side of the inequality, notice that when $d$ is sufficiently large we have 
$|z^T\Sigma^{-1}z|=(d+1)(z^Tz+(\sum_{i=1}^d z_i)^2)\leq  16d^3$ for $z\in [-2,2]^d$.
It follows in the same way as before that
\begin{align*}
\frac{\phi_{\Sigma,m}(x+z)}{\phi_{\Sigma,m}(x)}&=\exp\left(\frac{-2z^T\Sigma^{-1} x+z^T \Sigma^{-1}z}{2m}\right)\leq \exp\left(\frac{|z^T\Sigma^{-1} x|}{m}\right)\exp\left(\frac{|z^T\Sigma^{-1}z|}{2m}\right)\\
&\leq e^{o(d)+8d^3/m}\leq e^{\eta d/2}.
\end{align*}
This concludes the proof of \eqref{normal_density_comparison}.

The second inequality follows by a similar reasoning. As
$$|x^T\Sigma^{-1}x|= (d+1)\left(\|x\|_2^2+  (\sum_{i=1}^d x_i)^2\right)\leq (d+1)^2\|x\|_2^2\leq 2dm^{4/3},$$
for $m'=(1+\theta)m$ with $|\theta|=O( m^{-5/12})$ we have
\begin{align*}
\frac{\phi_{\Sigma, m'}(x)}{\phi_{\Sigma, m}(x)}&=\left(\frac{m'}{m}\right)^{-d/2}  \exp\left( -\frac{ x^T \Sigma^{-1}x}{2m'}+\frac{ x^T \Sigma^{-1}x}{2m}\right)\leq (1-|\theta|)^{-d/2}\exp\left( \frac{\theta |x^T\Sigma^{-1}x|}{2(1+\theta )m}\right)\\
&\leq \exp\left( O(dm^{-5/12})+O(dm^{-1/12}) \right)\leq e^{\eta d}
\end{align*}
for any $\eta>0$ when $d$ and $m$ are sufficiently large. 
Applying the same estimate to $\frac{\phi_{\Sigma, m}(x)}{\phi_{\Sigma, m'}(x)}$ gives the  lower bound.

\end{proof}

\subsubsection{Entropy of auxiliary process}\label{sec:entropy_auxiliary}

Recall that $d = k_\SS - 1$, and note that $\C_\SS \in \ZZ^{d+1}$ satisfies the constraint $\sum_{i=1}^{d+1} \C_{\SS,i} =\1\{ N_\SS \text{ is odd}\}$, as described in Proposition~\ref{auxiliary_law}. Recall from  Proposition~\ref{auxiliary_law} that 
$$\C_\SS=\C_\SS^+-\C_\SS^-+\C_\err.$$
Throughout this section, we use the superscript $\hat{\cdot}$ indicates projection to dimension $d$. For instance, $\hat{\C}_\SS \in \ZZ^d$ denotes the projection of $\C_\SS$ onto its first $d$ coordinates. Given $N_\SS$, this projection uniquely determines the original vector $\C_\SS$. Similarly, let $\hat \vp=(1,\dots,1)/(d+1)\in\mathbb{R}^d$.

We begin the normal approximation of $\C_\SS$ by introducing the following random variables. For a vector $x\in \mathbb{R}^d$, let $[x]$ denote its nearest integer vector in $\ZZ^d$. For $m \in \mathbb{N}$, let $\xi^+_m, \xi^-_m \overset{\mathrm{iid}}{\sim} \mathrm{Normal}_d\left(m\hat \vp,m\Sigma\right)$, and define
\beq\label{M_xi}
\hat{M}^\xi_{m} := [\xi^+_m] - [\xi^-_m] \in \ZZ^d,
\eeq
which corresponds uniquely to a vector $M^\xi_m\in \ZZ^{d+1}$ defined by
$$
M^\xi_m(i):=
\begin{cases}
\hat M^\xi_m(i) &\text{ for $i\in [d]$},\\
- \sum_{i=1}^d ( [\xi^+_m](i)-[\xi^-_m](i))&\text{ for $i=d+1$},
\end{cases}
$$
where the $(d+1)$-th coordinate comes from requiring $\sum_{i=1}^{d+1} M^\xi_m(i)=0$. The following coupling result is a key ingredient in later analysis, which will explain how $\C_\SS=\C_\SS(t)$ can be approximated by a multivariate normal random variable.

\begin{lemma}\label{coupling}
Suppose $1\ll d^2\ll t$. Let $M_{\SS}(t):=\sum_{m=0}^\infty \1_{\{\floor{N_\SS(t)/2}=m\}}M^\xi_m$. We can couple $M_\SS$ with $\C^+_\SS-\C^-_\SS$ with high probability, i.e.,
$$\P( M_\SS= \C^+_\SS-\C^-_\SS)=1-o(1).$$
\end{lemma}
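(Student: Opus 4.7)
The strategy is to condition on $N_\SS$ and use a local central limit theorem for the multinomial distribution to approximate each multinomial variable $\hat\C^\pm_\SS$ by the rounded Gaussian $[\xi^\pm_m]$, then apply a maximal coupling. First, I would invoke a Chernoff bound for the Poisson variable $N_\SS(t)\sim\mathrm{Poisson}(\rho_\SS t)$ to restrict the analysis to the event $\EE_0 := \{N_\SS(t)\asymp t\}$, which has probability $1-o(1)$. On $\EE_0$ the integer $m := \lfloor N_\SS/2\rfloor$ satisfies $m\asymp t$, so the hypothesis $d^2\ll t$ places us in the regime where the Gaussian bulk set $\AA_{m,d}$ from \eqref{def_bulk_set} and the stability estimate of Lemma~\ref{density_comparison} are available. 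Given $\lfloor N_\SS/2\rfloor=m$, Proposition~\ref{auxiliary_law}(ii) says $\C^\pm_\SS$ are i.i.d.\ $\mathrm{Multi}_{k_\SS}(m,\vp)$, while $M^\xi_m$ is built from the independent Gaussians $\xi^\pm_m\sim\mathrm{Normal}_d(m\hat\vp,m\Sigma)$. Since $\C^+_\SS\in\ZZ^{d+1}$ is determined by $\hat\C^+_\SS\in\ZZ^d$ through $\sum_i\C^+_{\SS,i}=m$, and $M^\xi_m$ is determined by $\hat M^\xi_m=[\xi^+_m]-[\xi^-_m]$ through $\sum_i M^\xi_m(i)=0$, it suffices to couple the $d$-dimensional projections $\hat\C^\pm_\SS$ with $[\xi^\pm_m]$; mutual independence of the $\pm$ copies on both sides reduces the task to coupling $\hat\C^+_\SS$ with $[\xi^+_m]$.

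The technical core is a quantitative local CLT. Applying Stirling to the PMF
\[
p_m(x)=\frac{m!}{\prod_{i=1}^{d+1}x_i!}\,(d+1)^{-m}, \qquad x\in\ZZ_+^{d+1},\ \sum_i x_i=m,
\]
and Taylor-expanding the log-factorials about the mean $m\hat\vp$, one obtains, uniformly on the bulk set $\AA_{m,d}$,
\[
p_m(x)=\phi_{\Sigma,m}(\hat x - m\hat\vp)\,\bigl(1+o(1)\bigr),
\]
with error of order $d^2/m$ together with subleading corrections that are controlled by the fact that $\hat x - m\hat\vp\in\AA_{m,d}$. Lemma~\ref{density_comparison} upgrades this to the rounded Gaussian by identifying $\P([\xi^+_m]=x)$ with the integral of $\phi_{\Sigma,m}$ over a unit cube around $\hat x - m\hat\vp$, yielding the same pointwise asymptotic on $\AA_{m,d}$. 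Corollary~\ref{normal_outlier} and a matching Chebyshev-type tail bound for the multinomial (applied to each coordinate count, which has variance $O(m/d)$) then imply that $\AA_{m,d}$ carries probability $1-o(1)$ under both laws. Summing the pointwise ratio over $\AA_{m,d}$ and bounding the complementary tails yields
\[
d_{\mathrm{TV}}\!\left(\mathcal{L}\bigl(\hat\C^+_\SS\mid\lfloor N_\SS/2\rfloor=m\bigr),\,\mathcal{L}([\xi^+_m])\right)=o(1)
\]
uniformly over $m\asymp t$, and identically for the $-$ copy.

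Finally, by the maximal coupling lemma, on $\EE_0$ one can couple $\hat\C^+_\SS$ with $[\xi^+_m]$ and, independently, $\hat\C^-_\SS$ with $[\xi^-_m]$ so that both coincide with probability $1-o(1)$; lifting back to the full $(d+1)$-vectors via the two sum constraints then gives $\C^+_\SS-\C^-_\SS=M^\xi_m$, and averaging against the law of $N_\SS$ (together with $\P(\EE_0^c)=o(1)$) completes the proof. The main obstacle I anticipate is the local CLT step: standard statements are fixed-dimensional, so the Stirling expansion has to be tracked uniformly on a bulk whose size grows with $d$, keeping the pointwise multiplicative error $o(1)$ \emph{and} keeping the integrated lattice sum $\sum_{x}|p_m(x)-\phi_{\Sigma,m}(\hat x - m\hat\vp)|$ bounded by $o(1)$. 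The radius $d^{-1/2}m^{2/3}$ defining $\AA_{m,d}$ is calibrated precisely so that the Gaussian tail and the Stirling/Taylor remainder balance under the hypothesis $d^2\ll t$, which is exactly the room this regime affords.
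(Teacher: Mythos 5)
Your plan follows the paper's strategy step for step: condition on $\lfloor N_\SS/2\rfloor$ (after restricting to the typical range of $N_\SS$), project to dimension $d$, bound the total variation distance between the centered multinomial projection and the rounded Gaussian $[\xi_m^\pm]$, maximal-couple, and lift back via the deterministic sum constraint. The one substantive divergence is how the key TV estimate is obtained: the paper simply cites a precise local CLT for multinomials, Lemma~3.1 of \cite{ouimet2021precise}, which gives $\|\hat \P_{m,\vp}*\1_{[-1/2,1/2]^d}-\mathbb{Q}_{m,\vp}\|_{\mathrm{TV}}=O(m^{-1/2}d)$, and this is $o(1)$ precisely under $d^2\ll m\asymp t$, which is the hypothesis $d^2\ll t$. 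You instead propose to rederive an equivalent bound from scratch by a dimension-uniform Stirling/Taylor expansion restricted to the bulk $\AA_{m,d}$, together with tail bounds under both the multinomial and the Gaussian laws. That route is sound in spirit and is in fact roughly how results like Ouimet's are proved, but it entails genuine bookkeeping (tracking the error terms uniformly in $d$, verifying that $\AA_{m,d}$ catches $1-o(1)$ of both measures, and integrating the pointwise ratio to a summed $\ell^1$ bound) that the paper avoids by invoking the reference. Your flagged worry about the dimension-uniform LCLT is exactly right and is the reason the paper reaches for Ouimet's quantitative estimate rather than a generic fixed-dimension statement.
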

\begin{proof}
Let $m\in \mathbb{N}$ and define random variables  $M^{\pm}_m\overset{\mathrm{iid}}{\sim} \mathrm{Multi}_{d+1}(m, \vp)$. Note that conditioned on $\{\floor{N_\SS/2}=m\}$, $\C^{\pm}_\SS$ have the same law as $M^{\pm}_m$.

To approximate with normal distribution, we consider the projection of $M^{\pm}_m$ to the first $d$ coordinates, denoted by $\hat M^{\pm}_m$. Letting $\hat \P_{m,\vp}$ denote the law of $\hat M_m^{\pm}-m\hat \vp$ and $\mathbb{Q}_{m,\vp}$ denote the law of $\mathrm{Normal}_d(0,m\Sigma)$, 
Lemma 3.1 in \cite{ouimet2021precise} states
$$\| \hat \P_{m,\vp}*\1_{[-1/2,1/2]^d}-\mathbb{Q}_{m,\vp}\|_{\mathrm{TV}}=O(m^{-1/2}d).$$
Hence, we can couple $\hat M_m^{\pm}-m\hat \vp$ with $[\xi^{\pm}_m]$ except with error probability $O(m^{-1/2}d)$, i.e., $\hat M_m^+-\hat M^-_m=[\xi^{+}_m]-[\xi^{-}_m]$ holds except with  probability $O(m^{-1/2}d)$. Hence, $\hat M_\SS$ can be constructed to satisfy
$$\P( \hat M_\SS=\hat \C^+_\SS-\hat \C^-_\SS|\floor{ N_\SS/2}=m)=1-O(m^{-1/2}d).$$

As $N_\SS$ follows $\mathrm{Poisson}(\rho_\SS t)$,  for any $\ep>0$, with high probability $N_\SS\in  [(1-\ep)\rho_\SS t,(1+\ep)\rho_\SS t]$. Summing over  typical values of $N_\SS$ then gives
\begin{align*}
\P( \hat M_\SS= \hat \C^+_\SS-\hat \C^-_\SS)&\geq \sum_{\ell \in [(1-\ep)\rho_\SS t,(1+\ep)\rho_\SS t]} \P(\hat M_\SS=\hat \C^+_\SS-\hat \C^-_\SS | N_\SS=\ell)\P(N_\SS=\ell)\\
&\geq (1-O(t^{-1/2}d))\cdot \P( N_\SS\in   [(1-\ep)\rho_\SS t,(1+\ep)\rho_\SS t])=1-o(1).
\end{align*}
Lastly, observe that due to the definition of $M_\SS$ and $\C^{\pm}_\SS$, the value of the last coordinate is uniquely determined by the  first $d$ coordinates, i.e.,
$$\{ M_\SS= \C^+_\SS-\C^-_\SS\}=\{\hat M_\SS= \hat \C^+_\SS-\hat \C^-_\SS\}$$
and the proof is complete.
\end{proof}

\bigskip
We are now ready to state the entropic concentration of $\C_\SS$ in the regime $k \gg 1$ with $k^2 \ll |G|^{2/k}$, formalized in Proposition \ref{CS_entropy_concentration}.
Recall from \eqref{cutoff_time} that in this regime
\[
t_0(k,G) = \frac{k}{2\pi e} |G|^{2/(k-1)},
\] 
and note that $1 \ll k^3 \ll t_0(k,G)$.

Define
\beq\label{typ_smallregime}
\typ^{\SS}:= \{ M_\SS= \C^+_\SS-\C^-_\SS\}\cap \{N_\SS\in  [\rho_\SS t-t^{7/12}, \rho_\SS t+t^{7/12}]\}.
\eeq
It is a direct consequence of Lemma \ref{coupling} and standard large deviations that 
\beq\label{typical_event_S_prob}
\P(\typ^\SS)=1-o(1).
\eeq
Recall that $d=k_\SS-1$. Let $\hat w$ denote the first $d$ coordinates of $w\in \ZZ^{d+1}$. We now define, for $\delta>0$ and $t\geq 0$,
\begin{align}\label{WS_smallregime}
\WW_\SS&=\WW_\SS(\delta,t):=\{ w\in \ZZ^{d+1}: \sum_{i=1}^{d+1}w_i\in \{0,1\}, \hat w\in \WW^{\mathrm{Normal}}_{\rho_\SS t,\delta}\cap \ZZ^d\}.
\end{align}

Our goal for the remainder of this section is to establish properties of $\WW_\SS(\delta,t)$ that will be useful in determining mixing times (see Lemma~\ref{WS_size} and Lemma~\ref{set_comparison}), and to show that $\C_\SS(t)$ typically belongs to $\WW_\SS(\delta,t)$ (see Proposition~\ref{CS_entropy_concentration}).

The following lemma provides an upper bound on the size of $\WW_\SS(\delta,t)$ before the proposed mixing time. Intuitively, this indicates that $\C_\SS$ remains confined to a relatively small set, and hence the walk cannot yet be well mixed.
\begin{lemma}\label{WS_size}
Suppose $1 \ll k^3 \ll t_0(k,G)$ set $d=k_\SS-1$. For any fixed $\ep > 0$ and $\delta>0$ sufficiently small relative to $\ep$, we have, for $t=(1-\ep)t_0(k,G)$,
\[
\bigl|\WW_\SS(\delta,t)\bigr|
= 2\bigl|\WW^{\mathrm{Normal}}_{\rho_\SS t,\delta} \cap \ZZ^d\bigr|
\;\leq\; |G|^{d/(k-1)}.
\]
\end{lemma}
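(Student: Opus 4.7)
The plan is to view each point of $\WW_\SS(\delta,t)$ as producing a unit cube in $\mathbb{R}^d$ on which the Gaussian density $\phi_{\Sigma,m}$ (with $m=\rho_\SS t$) is essentially as large as at the integer point itself, and then use the total-mass bound $\int \phi_{\Sigma,m}=1$ to count these cubes. First I would dispose of the factor of $2$: by definition, $\WW_\SS(\delta,t)$ consists of $w\in\ZZ^{d+1}$ whose first $d$ coordinates $\hat w$ lie in $\WW^{\mathrm{Normal}}_{\rho_\SS t,\delta}\cap\ZZ^d$ and whose last coordinate is forced by the choice $\sum_i w_i\in\{0,1\}$, giving two possibilities for $w_{d+1}$ per $\hat w$. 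Hence $|\WW_\SS(\delta,t)|=2\,|\WW^{\mathrm{Normal}}_{\rho_\SS t,\delta}\cap\ZZ^d|$, and the task reduces to bounding the number of integer points in $\WW^{\mathrm{Normal}}_{\rho_\SS t,\delta}$.

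Next I would establish the integer-point count by a packing argument. Fix any $\eta>0$ (to be chosen small). Since $1\ll k^3\ll t_0(k,G)$ and $d=k_\SS-1\asymp k$, $m=\rho_\SS(1-\ep)t_0\asymp t_0$, we have $1\ll d^3\ll m$, so Corollary~\ref{normal_outlier} gives $\WW^{\mathrm{Normal}}_{m,\delta}\subseteq \AA_{m,d}$ and Lemma~\ref{density_comparison} applies. For each $x\in\WW^{\mathrm{Normal}}_{m,\delta}\cap\ZZ^d\subseteq\AA_{m,d}$ and every $z\in[-1/2,1/2]^d\subseteq[-2,2]^d$, Lemma~\ref{density_comparison} yields $\phi_{\Sigma,m}(x+z)\geq e^{-\eta d}\phi_{\Sigma,m}(x)$. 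Combined with the definition of $\WW^{\mathrm{Normal}}_{m,\delta}$ in \eqref{normal_typical_set}, this gives
\[
\int_{x+[-1/2,1/2]^d}\phi_{\Sigma,m}(y)\,dy \;\geq\; e^{-\eta d}\,\phi_{\Sigma,m}(x)\;\geq\; e^{-\eta d}\Bigl(\tfrac{2\pi e m}{(1-\delta)(d+1)}\Bigr)^{-d/2}.
\]
The unit cubes indexed by distinct integer points are disjoint, so summing and using $\int_{\mathbb{R}^d}\phi_{\Sigma,m}=1$ yields
\[
|\WW^{\mathrm{Normal}}_{m,\delta}\cap\ZZ^d|\;\leq\; e^{\eta d}\Bigl(\tfrac{2\pi e m}{(1-\delta)(d+1)}\Bigr)^{d/2}.
\]

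Finally I would plug in the parameters. Using $d+1=k_\SS=\rho_\SS k$ and $m=\rho_\SS(1-\ep)t_0$, the ratio simplifies to $\frac{2\pi e m}{(1-\delta)(d+1)}=\frac{1-\ep}{1-\delta}\cdot\frac{2\pi e t_0}{k}=\frac{1-\ep}{1-\delta}\,|G|^{2/(k-1)}$, and therefore
\[
|\WW^{\mathrm{Normal}}_{\rho_\SS t,\delta}\cap\ZZ^d|\;\leq\; e^{\eta d}\Bigl(\tfrac{1-\ep}{1-\delta}\Bigr)^{d/2}|G|^{d/(k-1)}.
\]
Choosing $\delta$ small enough that $(1-\delta)>e^{2\eta}(1-\ep)$ (say $\delta<\ep/2$ with $\eta<\ep/8$), the prefactor is bounded by $e^{-cd}$ for some $c=c(\ep,\delta,\eta)>0$, and since $d\to\infty$ this prefactor is eventually at most $1/2$. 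Multiplying by the factor of $2$ from the first step then gives the desired bound $|\WW_\SS(\delta,t)|\leq |G|^{d/(k-1)}$.

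I do not anticipate any real obstacle; the only point requiring care is ensuring the hypotheses of Lemma~\ref{density_comparison} and Corollary~\ref{normal_outlier} (namely $d^3\ll m$ and $x\in\AA_{m,d}$) are in force, which is why the lemma assumes $k^3\ll t_0(k,G)$. The mild subtlety is choosing $\delta$ (and the implicit $\eta$) small relative to $\ep$ so that the exponential factor $((1-\ep)/(1-\delta))^{d/2}$ beats the comparison loss $e^{\eta d}$; this is the substantive role of the phrase ``$\delta$ sufficiently small relative to $\ep$'' in the statement.
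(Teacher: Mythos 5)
Your proposal is correct and is essentially the same argument as the paper's: bound the measure of each unit cube centered at an integer point of $\WW^{\mathrm{Normal}}_{\rho_\SS t,\delta}$ from below via Lemma~\ref{density_comparison} (using Corollary~\ref{normal_outlier} to put such points in $\AA_{\rho_\SS t,d}$), sum the disjoint cubes against the total mass $1$, then plug in the definitions of $t_0$, $d+1=k_\SS=\rho_\SS k$, and $m=\rho_\SS t$, and absorb the $e^{\eta d}\bigl(\tfrac{1-\ep}{1-\delta}\bigr)^{d/2}$ prefactor into a factor $\le 1/2$ by choosing $\delta$ and $\eta$ small relative to $\ep$. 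The only cosmetic difference is that the paper phrases the cube-mass bound probabilistically as $\P([\xi]=w)$ whereas you write the integral directly; the substance is identical.
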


\begin{proof}
For ease of notation, write $t_0=t_0(k,G)$. Let $\xi:=\xi_{\rho_\SS t}\sim  \mathrm{Normal}_d(0,\rho_\SS t\Sigma)$ and let $[\xi]$ denote its nearest integer point in $\ZZ^d$. Recall from Corollary \ref{normal_outlier} that each $w\in  \WW^{\mathrm{Normal}}_{\rho_\SS t,\delta} \cap \ZZ^d$ also belongs to the set  $\AA_{\rho_\SS t,d}=\{ x\in \mathbb{R}^d:  \|x\|_2\leq d^{-1/2}(\rho_\SS t)^{2/3}\}$. It then follows from Lemma \ref{density_comparison} that, for any $\eta>0$ and $w\in  \WW^{\mathrm{Normal}}_{\rho_\SS t,\delta} \cap \ZZ^d$,
$$\P([\xi]=w)=\int_{z\in [-1/2,1/2]^d} \phi_{\Sigma,\rho_\SS t}(w+z) dz\geq e^{-\eta d} \phi_{\Sigma,\rho_\SS t}(w)$$
when $d$ is sufficiently large. 

Now, since $w \in \WW^{\mathrm{Normal}}_{\rho_\SS t,\delta}\cap\ZZ^d$, it follows from the definition of $\WW^{\mathrm{Normal}}_{\rho_\SS t,\delta}$ in \eqref{normal_typical_set} that
\begin{align*} 
1&\geq \sum_{w\in \WW^{\mathrm{Normal}}_{\rho_\SS t,\delta}\cap \ZZ^d}\P([\xi]= w)\geq \sum_{w\in\WW^{\mathrm{Normal}}_{\rho_\SS t,\delta}\cap \ZZ^d}e^{-\eta d}\phi_{\Sigma, \rho_\SS t}( w)\geq  | \WW^{\mathrm{Normal}}_{\rho_\SS t,\delta} \cap \ZZ^d|\cdot  e^{-\eta d}\left(\frac{(1-\ep)2\pi e t_0}{(1-\delta )k}\right)^{-d/2}.
\end{align*}
Recall from \eqref{cutoff_time} that the definition of $t_0$ gives
$$e^{-\eta d}\left(\frac{(1-\ep)2\pi e t_0}{(1-\delta )k}\right)^{-d/2}=e^{-\eta d}\left(\frac{1-\ep}{1-\delta}\right)^{-d/2}\cdot |G|^{-d/(k-1)}.$$
Hence, choosing both $\delta$ and $\eta$ to be sufficiently small compared to $\ep$ ensures 
$$1\geq 2|\WW^{\mathrm{Normal}}_{\rho_\SS t,\delta} \cap \ZZ^d|\cdot |G|^{-d/(k-1)}.$$
By  definition, each point in $\WW^{\mathrm{Normal}}_{\rho_\SS t,\delta} \cap \ZZ^d$ corresponds to two points in $\WW_\SS(\delta,t)$, which then implies $|\WW_\SS(\delta,t)|=2|\WW^{\mathrm{Normal}}_{\rho_\SS t,\delta} \cap \ZZ^d|\leq |G|^{d/(k-1)}$.

\end{proof}

The next lemma relates the probability mass function of $[\xi^+_m] - [\xi^-_m]$ to the density function of a multivariate normal random variable. This serves as a preliminary result for the proof of Lemma \ref{set_comparison}.

Recall that $\hat \vp=(1,\dots,1)/(d+1)\in\mathbb{R}^d$ and $\Sigma=\mathrm{diag}(\hat \vp)-\hat \vp \hat \vp^T$. Also, recall $\AA_{m,d}=\{ x\in \mathbb{R}^d: \|x\|_2\leq d^{-1/2}m^{2/3}\}$.

\begin{lemma}\label{multinomial_density_comparison}
Suppose $1\ll d^3 \ll t$. For $m\in\mathbb{R}$, let $\xi^\pm_m$ denote two independent $\mathrm{Normal}_d\left(m\hat \vp,m\Sigma\right)$ random variables. For any $\delta>0$, if 
$$w\in \WW^{\mathrm{Normal}}_{\rho_\SS t,\delta}\cap \ZZ^d \quad \text{ and }\quad 2m\in [\rho_\SS t-2t^{7/12}, \rho_\SS t+2t^{7/12}],$$
 then for arbitrarily small $\eta>0$ (independent of $\delta$) and $ y\in\{ z\in\ZZ^d: \|z\|_1\leq 1\}$,
$$  \P([\xi^+_m]-[\xi^-_m]=w+y)\leq e^{\eta d}\phi_{\Sigma, \rho_\SS t}(w).$$
\end{lemma}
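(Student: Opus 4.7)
The plan is to express the point probability as a discrete convolution, apply the local stability of Gaussian density (Lemma \ref{density_comparison}) to each factor, and then identify the resulting lattice sum with the convolution integral $\int \phi_{\Sigma,m}(u+w+y-m\hat\vp)\phi_{\Sigma,m}(u-m\hat\vp)\,du=\phi_{\Sigma,2m}(w+y)$ using the Gaussian product identity together with Poisson summation. Concretely, I would first write
$$
\P([\xi^+_m]-[\xi^-_m]=w+y)=\sum_{v\in\ZZ^d}\P([\xi^+_m]=v+w+y)\,\P([\xi^-_m]=v),
$$
and split $\ZZ^d$ into a bulk $\BB=\{v:v-m\hat{\vp},\,v+w+y-m\hat{\vp}\in\AA_{m,d}\}$ and its complement. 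For $v\in\BB$, each factor is an integral of $\phi_{\Sigma,m}(\,\cdot\,-m\hat{\vp})$ over a unit cube whose center lies in $\AA_{m,d}$, so Lemma \ref{density_comparison} gives $\P([\xi^+_m]=v+w+y)\le e^{\eta d/4}\phi_{\Sigma,m}(v+w+y-m\hat{\vp})$ and analogously for the other factor.

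Next, using the Gaussian product identity
$$
\phi_{\Sigma,m}(u+w+y-m\hat{\vp})\phi_{\Sigma,m}(u-m\hat{\vp})=\phi_{\Sigma,2m}(w+y)\cdot\phi_{\Sigma/2,m}(u-\mu),\quad \mu=m\hat{\vp}-(w+y)/2,
$$
I would factor $\phi_{\Sigma,2m}(w+y)$ out of the bulk sum. The remaining lattice sum $\sum_{v\in\ZZ^d}\phi_{\Sigma/2,m}(v-\mu)$ is a Riemann-sum approximation to $\int\phi_{\Sigma/2,m}=1$, and by Poisson summation equals $1+\sum_{k\ne 0}e^{-\pi^2 m k^T\Sigma k}\cos(2\pi k\cdot\mu)$. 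A direct eigenvalue analysis gives $k^T\Sigma k\ge |k|^2/(d+1)^2$ for all $k\ne 0$, so each correction is $\lesssim e^{-\pi^2 m/d^2}$ and the total correction is $\lesssim d\,e^{-\pi^2 m/d^2}=o(1)$ under the hypothesis $d^3\ll t\asymp m$. Finally, the second assertion of Lemma \ref{density_comparison}, combined with $|2m-\rho_\SS t|=O(t^{7/12})$ and $\|y\|_1\le 1$, converts $\phi_{\Sigma,2m}(w+y)$ into $e^{\eta d/4}\phi_{\Sigma,\rho_\SS t}(w)$, yielding the bulk bound of the required form.

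For the tail $v\notin\BB$, I would use the crude bound $\P([\xi^\pm_m]\notin m\hat{\vp}+\AA_{m,d})\le\exp(-c m^{1/3})$ coming from the proof of Corollary \ref{normal_outlier}. Since $w\in\WW^{\mathrm{Normal}}_{\rho_\SS t,\delta}$ forces $\phi_{\Sigma,\rho_\SS t}(w)\ge(2\pi e\rho_\SS t/((1-\delta)(d+1)))^{-d/2}$, a short calculation shows $\log(1/\phi_{\Sigma,\rho_\SS t}(w))\lesssim\log|G|\ll m^{1/3}$ in the regime $k\log k\ll\log|G|$ implied by $d^3\ll t$; hence the tail contribution is negligible compared to $e^{\eta d}\phi_{\Sigma,\rho_\SS t}(w)$. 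The main obstacle is the Poisson-summation step: because $\Sigma$ is degenerate along $\1$, the smallest eigenvalue of $m\Sigma/2$ is only of order $m/(d+1)^2$, and it is precisely the hypothesis $d^3\ll t$ that ensures $m/d^2\to\infty$, so that the Poisson-summation correction is $o(1)$ and can be absorbed into the $e^{\eta d}$ slack.
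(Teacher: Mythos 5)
Your bulk argument is correct and arguably cleaner than the paper's. Rather than the paper's route---reading the convolution integral $\phi_{\Sigma,2m}(x)=\int\phi_{\Sigma,m}(u-m\hat\vp)\phi_{\Sigma,m}(x+u-m\hat\vp)\,du$ as a sum of unit-cube integrals and rearranging---you apply Lemma~\ref{density_comparison} factorwise, peel off $\phi_{\Sigma,2m}(w+y)$ via the Gaussian product identity, and control the residual lattice sum $\sum_v\phi_{\Sigma/2,m}(v-\mu)$ by Poisson summation. One small correction: $\Sigma$ is not degenerate here; it is the $d\times d$ covariance with smallest eigenvalue $(d+1)^{-2}$, which is exactly what produces the slowest Poisson mode $e^{-\pi^2 m/(d+1)^2}$, and $d^3\ll m$ does make that leading correction $O(d\,e^{-\pi^2 m/(d+1)^2})=o(1)$ as you claim.

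The genuine gap is your tail estimate. You bound $\sum_{v\notin\BB}\P([\xi^+_m]=v+w+y)\,\P([\xi^-_m]=v)$ by the total tail probability $\P([\xi^\pm_m]\notin m\hat\vp+\AA_{m,d})\lesssim e^{-cm^{1/3}}$, and then need this to be $\ll e^{\eta d}\phi_{\Sigma,\rho_\SS t}(w)$. Since $\phi_{\Sigma,\rho_\SS t}(w)\geq\bigl(\tfrac{2\pi e\rho_\SS t}{(1-\delta)(d+1)}\bigr)^{-d/2}$ for $w\in\WW^{\mathrm{Normal}}_{\rho_\SS t,\delta}$, this comparison requires $m^{1/3}\gg\tfrac{d}{2}\log\tfrac{\rho_\SS t}{d+1}$, which carries an extra $\log(t/d)$ factor beyond the lemma's hypothesis $d\ll m^{1/3}$. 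Your claim that $d^3\ll t$ implies $k\log k\ll\log|G|$ is incorrect: in the application one has $k^2\ll|G|^{2/(k-1)}$, which only gives $\tfrac{\log|G|}{k-1}-\log k\to\infty$; it does not force the ratio $(k-1)\log k/\log|G|$ to zero. Concretely, take $k\asymp\log|G|/\log\log|G|$: then $d^3\ll t_0(k,G)$ holds, yet $t_0^{1/3}\ll\log|G|\asymp\tfrac{d}{2}\log\tfrac{\rho_\SS t_0}{d+1}$, so your tail term dominates the target $e^{\eta d}\phi_{\Sigma,\rho_\SS t}(w)$ and the argument breaks down. The paper avoids this by never summing the tail: it bounds the tail sum by $\sup_{v\in\AA_{m,d}^c}\sup_{z\in[-1/2,1/2]^d}\phi_{\Sigma,m}(v+z)$, after observing that $\sum_u\P([\xi^-_m]=u)\leq 1$. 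That supremum shares the $(2\pi m)^{-d/2}$ prefactor with $\phi_{\Sigma,\rho_\SS t}(w)$, so the ratio of the two is $e^{O(d)}e^{-m^{1/3}/8}$, negligible under $d^3\ll m$ alone. Replacing your total-probability tail bound with this sup-times-one argument would close the gap.
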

\begin{proof}
Since
$$\P\left([\xi^{\pm}_m] = x \right)=\int_{z\in [-1/2,1/2]^d} \phi_{\Sigma, m}(x-m\hat \vp+z) dz,$$ an application of Lemma \ref{density_comparison} shows that, for $x-m\hat \vp\in \AA_{m,d}$,
\beq\label{density_comparison_inequality}
e^{-\eta d}\sup_{z\in [-1/2,1/2]^d}\phi_{\Sigma, m}(x-m\hat \vp+z)\leq \P\left([\xi^{\pm}_m] = x \right)\leq e^{\eta d}\inf_{z\in [-1/2,1/2]^d}\phi_{\Sigma, m}(x-m\hat \vp+z).
\eeq
Since  $\xi^+_m-\xi^-_m\sim \mathrm{Normal}_d(0, 2m\Sigma)$, we can observe that for any $x\in \mathbb{R}^d$,
\begin{align}\label{phi_into_sum}
 \nonumber   \phi_{\Sigma, 2m}(x)&=\int_{y\in\mathbb{R}^d} \phi_{\Sigma,m}(y-m\hat \vp)\phi_{\Sigma, m}(x+y-m\hat \vp)\;dy\\
  \nonumber      &=\sum_{u\in \ZZ^d}\int_{z\in [-1/2,1/2]^d} \phi_{\Sigma, m}(u-m\hat \vp+z)\phi_{\Sigma, m}(x+u-m\hat \vp+z)\;dz\\
 \nonumber  &\geq \sum_{u\in \ZZ^d: x+u-m\hat \vp\in \AA_{m,d}}\inf_{z\in [-1/2,1/2]^d}\phi_{\Sigma, m}(x+u-m\hat \vp+z)\P([\xi^-_m]=u)\\
 \nonumber  &\geq e^{-\eta d}\sum_{u\in \ZZ^d: x+u-m\hat \vp\in\AA_{m,d}} \P\left([\xi^+_m] = x + u\right)\P([\xi^-_m]=u)\\
&=e^{-\eta d} \left( \P([\xi^+_m]-[\xi^-_m]=x)-\sum_{u\in \ZZ^d: x+u-m\hat \vp\in \AA_{m,d}^c}\P\left([\xi^+_m] = x + u\right)\P([\xi^-_m]=u)\right),
\end{align}
where the fourth line uses \eqref{density_comparison_inequality}.

To see the second term in \eqref{phi_into_sum} contributes negligibly, note that 
\begin{align*}
\sum_{u\in \ZZ^d: x+u-m\hat \vp\in \AA_{m,d}^c}\P\left([\xi^+_m] = x + u\right)\P([\xi^-_m]=u)&\leq 
\sup_{u\in \ZZ^d: x+u-m\hat \vp\in \AA_{m,d}^c}\P\left([\xi^+_m] = x + u\right)\\
&\leq  \sup_{u\in \ZZ^d: x+u-m\hat \vp\in \AA_{m,d}^c}\sup_{z\in [-1/2,1/2]^d} \phi_{\Sigma,m}(x+u-m\hat \vp+z)\\
&= \sup_{v\in \AA_{m,d}^c}\sup_{z\in [-1/2,1/2]^d} \phi_{\Sigma, m}(v+z).
\end{align*}

The same argument as that in the proof of Corollary \ref{normal_outlier} yields, for ${v\in  \AA_{m,d}^c}$ and $z\in [-1/2,1/2]^d$, we have
\beq\label{shifted_A}
(v+z)^T\Sigma^{-1}(v+z)\geq (d+1) \|v+z\|_2^2\geq d( \|v\|_2^2-2\sum_{i=1}^d |v_iz_i|)\geq  d( \|v\|_2^2-d^{1/2}\|v\|_2) \geq m^{4/3}/4
\eeq
as $1\ll d^3\ll m$. Thus, by the assumption that $2m\geq \rho_\SS t-2t^{7/12}$, we have, for any $w\in \WW^{\mathrm{Normal}}_{\rho_\SS t,\delta}$,
\begin{align*}\sup_{v\in  \AA_{m,d}^c}\sup_{z\in [-1/2,1/2]^d} \phi_{\Sigma, m}(v+z)&\leq (2\pi m)^{-d/2}(d+1)^{(d+1)/2}\exp(- m^{1/3}/8)\\
&\leq  4^d(d+1)^{1/2}e^{d/2}(1-\delta)^{-d/2} \exp(- m^{1/3}/8)\left(\frac{2\pi e \rho_\SS t}{(1-\delta)(d+1)}\right)^{-d/2}\\
&\leq e^{-2\eta d}\phi_{\Sigma,\rho_\SS t}(w),
\end{align*}
where the final inequality holds for any $\eta > 0$, and follows from the condition $1 \ll d^3 \ll m$.

Recall from Corollary \ref{normal_outlier} that  $\WW^{\mathrm{Normal}}_{\rho_\SS t,\delta} \subseteq \AA_{\rho_\SS t,d}$. For $w\in\WW^{\mathrm{Normal}}_{\rho_\SS t,\delta}\cap \ZZ^d$, taking $x=w+y$ in \eqref{density_comparison_inequality} and applying Lemma \ref{density_comparison} yields
\begin{align*}
e^{\eta d}\phi_{\Sigma,\rho_\SS t}(w)&\geq  \phi_{\Sigma, 2m}(w)\geq e^{-\eta d} \phi_{\Sigma, 2m}(w+y)\\
&\geq e^{-2\eta d} \left( \P([\xi^+_m]-[\xi^-_m]=w+y)-e^{-2\eta d}\phi_{\Sigma,\rho_\SS t}(w)\right).
\end{align*}
Rearranging gives the desired upper bound for $ \P([\xi^+_m]-[\xi^-_m]=w+y)$.

\end{proof}

The lemma below will be used in proving the upper bound on the mixing time.

\begin{lemma}\label{set_comparison}
Suppose $1 \ll k^3 \ll t_0(k,G)$ and set $d = k_\SS-1$. Fix arbitrary $\ep>0$ and set $t = (1+\ep)t_0(k,G)$. When $\delta>0$ is sufficiently small relative to $\ep$, we have
    \[
  \WW_\SS(\delta,t) \subseteq \{ w\in \ZZ^{d+1}: \P(\C_\SS=w|\typ^{\SS})\leq |G|^{-d/(k-1)}\}.
    \]
\end{lemma}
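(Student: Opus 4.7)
The plan is to bound $\P(\C_\SS = w \mid \typ^\SS)$ pointwise for every $w \in \WW_\SS(\delta,t)$, using the normal approximation coupling that is already in force on $\typ^\SS$. Throughout, write $d = k_\SS - 1$ and recall that on $\typ^\SS$ we have $\C_\SS^+ - \C_\SS^- = M_\SS$, and $\C_\SS = M_\SS + \C_\err$ by Proposition~\ref{auxiliary_law}, where $\C_\err$ is a unit vector if $N_\SS$ is odd and $0$ otherwise. Fix $w \in \WW_\SS(\delta,t)$, so that $\hat w \in \WW^{\mathrm{Normal}}_{\rho_\SS t,\delta} \cap \ZZ^d$ and $\sum_{i=1}^{d+1} w_i \in \{0,1\}$.

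First, I would write
\[
\P(\C_\SS = w,\, \typ^\SS) \leq \sum_{n} \P(N_\SS = n)\,\P\!\left([\xi^+_{\lfloor n/2\rfloor}] - [\xi^-_{\lfloor n/2\rfloor}] + \hat\C_\err = \hat w \,\Big|\, N_\SS = n\right),
\]
where the sum is over $n \in [\rho_\SS t - t^{7/12},\, \rho_\SS t + t^{7/12}]$ of the parity dictated by $\sum_i w_i$, since on the event $\{\C_\SS = w\}$ the parity of $N_\SS$ is forced by $\sum_i w_i$. Next, I would expand over the (at most $d+1$) possible values of $\hat\C_\err$, each satisfying $\|\hat\C_\err\|_1 \leq 1$:
\[
\P\!\left([\xi^+_m] - [\xi^-_m] + \hat\C_\err = \hat w\right) = \sum_{\|y\|_1 \leq 1} \P(\hat\C_\err = y)\, \P\!\left([\xi^+_m] - [\xi^-_m] = \hat w - y\right),
\]
with $m = \lfloor n/2 \rfloor$. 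By the assumed range of $n$, we have $2m \in [\rho_\SS t - 2t^{7/12},\, \rho_\SS t + 2t^{7/12}]$, so Lemma~\ref{multinomial_density_comparison} applies and bounds each term by $e^{\eta d}\phi_{\Sigma,\rho_\SS t}(\hat w)$ for any fixed $\eta > 0$ once $d$ is large enough.

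Combining these estimates and using $\sum_n \P(N_\SS = n) \leq 1$ yields
\[
\P(\C_\SS = w,\, \typ^\SS) \leq e^{\eta d}\, \phi_{\Sigma,\rho_\SS t}(\hat w).
\]
Since $\hat w \in \WW^{\mathrm{Normal}}_{\rho_\SS t,\delta}$, the defining inequality \eqref{normal_typical_set} gives $\phi_{\Sigma,\rho_\SS t}(\hat w) \leq \bigl(\tfrac{2\pi e \rho_\SS t}{(1+\delta)(d+1)}\bigr)^{-d/2}$. Substituting $t = (1+\ep)t_0(k,G) = (1+\ep)\tfrac{k}{2\pi e}|G|^{2/(k-1)}$ and using $\rho_\SS k = k_\SS = d+1$, this simplifies to
\[
\phi_{\Sigma,\rho_\SS t}(\hat w) \leq \left(\tfrac{1+\ep}{1+\delta}\right)^{-d/2} |G|^{-d/(k-1)}.
\]

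Finally, since $\P(\typ^\SS) = 1 - o(1)$ by \eqref{typical_event_S_prob}, we get
\[
\P(\C_\SS = w \mid \typ^\SS) \leq \frac{e^{\eta d}}{\P(\typ^\SS)} \left(\tfrac{1+\ep}{1+\delta}\right)^{-d/2} |G|^{-d/(k-1)}.
\]
Choosing $\delta$ and $\eta$ sufficiently small relative to $\ep$ makes the prefactor $\leq 1$ for all large $d$, giving the stated inclusion. The main point requiring care is the parity accounting for $N_\SS$ and the short expansion over $\hat\C_\err$ to remain within the $\|y\|_1 \leq 1$ regime of Lemma~\ref{multinomial_density_comparison}; once that is handled, the rest is a clean substitution of the definition of $t_0(k,G)$.
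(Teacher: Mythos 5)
Your proof is correct and follows essentially the same route as the paper: decompose over the value of $N_\SS$ on the typical event, absorb the $\C_\err$ correction within the $\|y\|_1 \leq 1$ window of Lemma~\ref{multinomial_density_comparison} to bound each term by $e^{\eta d}\phi_{\Sigma,\rho_\SS t}(\hat w)$, substitute the definition of $t_0$ to get $\phi_{\Sigma,\rho_\SS t}(\hat w) \leq (\tfrac{1+\ep}{1+\delta})^{-d/2}|G|^{-d/(k-1)}$, and finish by dividing by $\P(\typ^\SS)=1-o(1)$ with $\delta,\eta$ small enough relative to $\ep$. The only cosmetic difference is that you expand over $\hat\C_\err$ weighted by its conditional law, whereas the paper simply takes a max over $\|y\|_1 \leq 1$; the latter sidesteps any need to argue conditional independence of $\C_\err$ from $M_\SS$ after the coupling, but the estimate is identical.
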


\begin{proof}

It suffices to show that when $\delta>0$ is sufficiently small relative to $\ep$, 
$$\P(\C_\SS=w|\typ^{\SS})\leq |G|^{-d/(k-1)}\quad \quad \text{ for any $w\in \WW_\SS(\delta,t)$}.$$
 For brevity, we shall always assume that $d$ is sufficiently large in the proof that follows. Recall that $\hat \cdot$ denotes the restriction to the first $d$ coordinates.

Note that
\begin{align*}
\P(\C_\SS=w,\typ^{\SS})&\leq\sum_{\ell \in  [\rho_\SS t-t^{7/12}, \rho_\SS t+t^{7/12}]} \P(M_\SS+\C_{\err}=w, N_\SS=\ell )\\
&\leq  \sum_{\ell \in [\rho_\SS t-t^{7/12}, \rho_\SS t+t^{7/12}]}\max_{y:\|y\|_1\leq 1} \P([\xi^+_{\floor{\ell/2}}]-[\xi^-_{\floor{\ell/2}}]=\hat w+\hat y)\P(  N_\SS=\ell)\\
&\leq \max_{\ell \in [\rho_\SS t-t^{7/12}, \rho_\SS t+t^{7/12}]}\max_{y:\|y\|_1\leq 1} \P([\xi^+_{\floor{\ell/2}}]-[\xi^-_{\floor{\ell/2}}]=\hat w+\hat y).
\end{align*}
Recall from Corollary \ref{normal_outlier} that $  \WW^{\mathrm{Normal}}_{\rho_\SS t,\delta} \subseteq \AA_{\rho_\SS t,d}$. By Lemma~\ref{multinomial_density_comparison} and Lemma~\ref{density_comparison}, we have, for any $\ell \in[\rho_\SS t-t^{7/12}, \rho_\SS t+t^{7/12}]$, $\hat w\in  \WW^{\mathrm{Normal}}_{\rho_\SS t,\delta}$ and $\hat y\in\{ z\in\ZZ^d: \|z\|_1\leq 1\}$,
$$\P([\xi^+_{\floor{\ell/2}}]-[\xi^-_{\floor{\ell/2}}]=\hat w+\hat y)\leq e^{\eta d}\phi_{\Sigma,\rho_\SS t}(\hat w).$$
 Plugging back in, we obtain
\begin{align*}
\P(\C_\SS=w,\typ^{\SS})&\leq e^{\eta d}\phi_{\Sigma,\rho_\SS t}(\hat w)\leq  e^{\eta d}\left(\frac{(1+\ep)2\pi e  t_0}{(1+\delta)k}\right)^{-d/2}\leq \frac{1}{2}|G|^{-d/(k-1)},
\end{align*}
where we choose $\delta$ and $\eta$ to be sufficiently small compared to $\ep$ to ensure the last inequality.
As $\P(\typ^\SS)=1-o(1)$ by \eqref{typical_event_S_prob},
$$\P(\C_\SS=w|\typ^{\SS})\leq (\P(\typ^{\SS}))^{-1} \cdot \frac{1}{2}|G|^{-d/(k-1)}\leq |G|^{-d/(k-1)}.$$

\end{proof}

The final conclusion of this section is to show that $\C_\SS(t)$ typically belongs to the set $\WW_\SS(\delta,t)$, so that the properties established earlier for $\WW_\SS(\delta,t)$ become applicable.

\begin{prop}\label{CS_entropy_concentration}
Suppose $1 \ll k^3 \ll t$. Then, for any  $\delta>0$,  
\[
\P\bigl(\C_\SS(t) \in \WW_\SS(\delta,t)\bigr) = 1 - o(1).
\]  
In particular, this holds for $t = (1\pm \ep) t_0(k,G)$ for  any $\ep>0$.

\end{prop}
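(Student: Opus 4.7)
The plan is to condition on the typical event $\typ^\SS$ from \eqref{typ_smallregime}, which satisfies $\P(\typ^\SS) = 1 - o(1)$ by \eqref{typical_event_S_prob}, and reduce the claim to a density-level statement about a multivariate normal approximation. Observe first that the constraint $\sum_{i=1}^{d+1} C_{\SS,i}(t) \in \{0,1\}$ in the definition of $\WW_\SS(\delta,t)$ is automatic from Proposition~\ref{auxiliary_law}: since $\C_\SS \eqd \C_\SS^+ - \C_\SS^- + \C_\err$ with coordinate sums $\floor{N_\SS/2}$, $\floor{N_\SS/2}$, and $\1\{N_\SS \text{ odd}\}$ respectively, the total coordinate sum of $\C_\SS$ is $\1\{N_\SS \text{ odd}\} \in \{0,1\}$. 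It therefore suffices to prove that $\hat{\C}_\SS(t) \in \WW^{\mathrm{Normal}}_{\rho_\SS t,\delta} \cap \ZZ^d$ with high probability.

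On $\typ^\SS$ we have the identity $\hat{\C}_\SS = \hat{M}_\SS + \hat{\C}_\err$ with $\hat{M}_\SS = [\xi_m^+] - [\xi_m^-]$ for $m = \floor{N_\SS/2}$, and the window bound $|2m - \rho_\SS t| \le 2 t^{7/12} + 1$. Setting $\xi := \xi_m^+ - \xi_m^- \sim \mathrm{Normal}_d(0, 2m\Sigma)$ (the means $m\hat{\vp}$ cancel), Lemma~\ref{typical_normal} applied conditionally on $m$ gives, for any chosen $\delta' \in (0,\delta)$, that $\xi \in \WW^{\mathrm{Normal}}_{2m,\delta'}$ with conditional probability $1-o(1)$; Corollary~\ref{normal_outlier} then places $\xi$ in the bulk region $\AA_{2m,d}$, where the local stability estimates of Lemma~\ref{density_comparison} apply. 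The hypothesis $1 \ll k^3 \ll t$ translates to $d^3 \ll t$, validating the use of that lemma.

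Now write $\hat{\C}_\SS = \xi + z$ with $z := ([\xi_m^+] - \xi_m^+) - ([\xi_m^-] - \xi_m^-) + \hat{\C}_\err \in [-2,2]^d$, where we use that $\hat{\C}_\err$ has at most one nonzero coordinate equal to $1$. The first inequality of Lemma~\ref{density_comparison} gives, for any $\eta > 0$ and sufficiently large $d$, that $\phi_{\Sigma, 2m}(\hat{\C}_\SS) \in [e^{-\eta d}, e^{\eta d}]\cdot \phi_{\Sigma, 2m}(\xi)$. Moreover, since $|2m - \rho_\SS t| = O(t^{7/12})$ and $\hat{\C}_\SS$ also lies in $\AA_{\rho_\SS t, d}$ (by the same computation as in Corollary~\ref{normal_outlier}), the second inequality of Lemma~\ref{density_comparison} yields $\phi_{\Sigma, \rho_\SS t}(\hat{\C}_\SS) = e^{O(\eta d)} \phi_{\Sigma, 2m}(\xi)$. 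Combined with $\xi \in \WW^{\mathrm{Normal}}_{2m,\delta'}$, this pins $\phi_{\Sigma,\rho_\SS t}(\hat{\C}_\SS)$ between bounds of the form $(2\pi e \rho_\SS t / ((1\pm\delta')(d+1)))^{-d/2}\cdot e^{\pm O(\eta d)}$, using that $\log(\rho_\SS t/2m) = O(t^{-5/12})$. Choosing $\eta$ small enough that $e^{O(\eta d)}$ is absorbed into the factor $((1\pm \delta)/(1\pm \delta'))^{d/2}$, which is possible because $\log((1\pm \delta')/(1\pm \delta)) \asymp \delta - \delta'$ for small parameters, this places $\hat{\C}_\SS$ in $\WW^{\mathrm{Normal}}_{\rho_\SS t, \delta}$, completing the proof.

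The main obstacle is the bookkeeping: the $e^{O(\eta d)}$-sized multiplicative errors accumulate across three distinct sources --- the rounding $[\xi_m^\pm] - \xi_m^\pm \in [-1/2,1/2]^d$, the $\hat{\C}_\err$ correction, and the variance mismatch between $2m$ and $\rho_\SS t$ --- and must all be simultaneously swallowed by the gap between $\delta'$ and $\delta$ in the typical-set definition. The window $|N_\SS - \rho_\SS t| \le t^{7/12}$ provided by $\typ^\SS$ is precisely what makes the variance mismatch small enough to fit within the hypothesis $m' \in [m - O(m^{7/12}), m+O(m^{7/12})]$ of Lemma~\ref{density_comparison}, and the regime $k^3 \ll t$ ensures that the first part of that same lemma applies with the $O(d^2 m^{-1/3}) = o(d)$ slack needed for any prescribed $\eta$.
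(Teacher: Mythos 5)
Your proof is correct and takes essentially the same approach as the paper: both reduce to the typical event $\typ^\SS$, use the coupling $\hat{\C}_\SS = \hat{M}_\SS + \hat\C_\err$ with $\hat{M}_\SS = [\xi_m^+]-[\xi_m^-]$, and propagate typicality from the Gaussian $\xi := \xi_m^+ - \xi_m^-$ to $\hat{\C}_\SS$ via Lemma~\ref{typical_normal}, Corollary~\ref{normal_outlier}, and Lemma~\ref{density_comparison}, with the $\delta'\to\delta$ slack absorbing the $e^{O(\eta d)}$ errors. The only packaging difference is that you argue the forward implication (typical $\xi$ implies typical $\hat\C_\SS$) whereas the paper proves the contrapositive via set containment; and your parenthetical invocation of Corollary~\ref{normal_outlier} to place $\hat\C_\SS$ in the bulk is slightly imprecise (a cleaner route is to apply the corollary with $\beta < 1$ so that $\xi \in \AA_{\beta \rho_\SS t, d}$, after which $\hat\C_\SS = \xi + z$ with $z \in [-2,2]^d$ lands in $\AA_{\rho_\SS t, d}$ since $2\sqrt d \ll d^{-1/2} t^{2/3}$), but the step is sound.
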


\begin{proof}

 For brevity, we shall always assume that $d=k_\SS-1$ is sufficiently large in the proof that follows. From now on we suppress the index on time and write $\WW_\SS=\WW_\SS(\delta,t)$. Since $N_\SS \in [\rho_\SS t-t^{7/12}, \rho_\SS t+t^{7/12}]$ with high probability, for simplicity of notation we will write $n^{-}:=\lceil \rho_\SS t - t^{7/12}\rceil$, and $n^+=\floor{\rho_\SS t +t^{7/12}}$ throughout the proof. 
 
Let $M_\SS$ be as defined in Lemma \ref{coupling}.
Recall that $\hat{\cdot}$ denotes the restriction to the first $d$ coordinates. By the definition \eqref{WS_smallregime} we can observe that 
 \begin{align*}
\{\C_\SS\in\WW_\SS\}\supseteq \{\C_\SS \in \WW_\SS,\typ^\SS\}&=\{ M_\SS+\C_\err\in \WW_\SS, \typ^\SS\}= \{\hat M_\SS+\hat \C_{\err} \in \WW^{\mathrm{Normal}}_{\rho_\SS t,\delta}, \typ^\SS\}.
 \end{align*}
 Let $\mathcal{B}=\{\pm e_i: i\in [d]\}\cup\{0\}$ denote the 1-neighborhood of the origin, and for any set $A$ denote by $A+\mathcal{B}:=\{ w+x: w\in A, x\in \mathcal{B}\}$. Then
 \begin{align*}
\P(\C_\SS \in \WW_\SS)&\geq \P(\C_\SS\in\WW_\SS,\typ^\SS)\\
&=\P(\hat M_\SS+\hat \C_{\err} \in \WW^{\mathrm{Normal}}_{\rho_\SS t,\delta}, M_\SS=\C^+_\SS-\C^-_\SS, N_\SS\in [n_-,n_+])\\
&\geq \P(\hat M_\SS+\hat \C_{\err} \in \WW^{\mathrm{Normal}}_{\rho_\SS t,\delta},  N_\SS\in [n_-,n_+])-\P( \{M_\SS=\C^+_\SS-\C^-_\SS\}^c)\\
&\geq  \P(\hat M_\SS+\mathcal{B}\subseteq   \WW^{\mathrm{Normal}}_{\rho_\SS t,\delta}, N_\SS\in [n_-,n_+])-o(1)\\
&=\sum_{\ell\in[n_-, n_+]} \P([\xi^+_{\floor{\ell/2}}]-[\xi^-_{\floor{\ell/2}}]+\mathcal{B}\subseteq  \WW^{\mathrm{Normal}}_{\rho_\SS t,\delta})\P(N_\SS=\ell| N_\SS\in [n_-,n_+])-o(1)
\end{align*}
Hence, it suffices to show $\P([\xi^+_{\floor{\ell/2}}]-[\xi^-_{\floor{\ell/2}}]+\mathcal{B}\subseteq  \WW^{\mathrm{Normal}}_{\rho_\SS t,\delta})=1-o(1)$ for all $\ell \in[n_-,n_+]$.

Let $\xi_{2\floor{\ell/2}}:=\xi^+_{\floor{\ell/2}}-\xi^-_{\floor{\ell/2}}$. It is easy to see that $\xi_{2\floor{\ell/2}}\in [\xi^+_{\floor{\ell/2}}]-[\xi^-_{\floor{\ell/2}}]+[-1,1]^d$ and hence 
\begin{align*}
\P\left(\{[\xi^+_{\floor{\ell/2}}]-[\xi^-_{\floor{\ell/2}}]+\mathcal{B}\subseteq  \WW^{\mathrm{Normal}}_{\rho_\SS t,\delta}\}^c\right)
&\leq \P([\xi^+_{\floor{\ell/2}}]-[\xi^-_{\floor{\ell/2}}] \in ( \WW^{\mathrm{Normal}}_{\rho_\SS t,\delta})^c+\mathcal{B})\\
&\leq  \P(  \xi_{2\floor{\ell/2}}\in  ( \WW^{\mathrm{Normal}}_{\rho_\SS t,\delta})^c+[-2,2]^d).
\end{align*}
We will prove the desired result by showing, for any $\ell\in [n_-, n_+]$, 
\beq\label{error_1}
\P( \xi_{2\floor{\ell/2}} \in \AA_{\rho_\SS t,d}^c+[-2,2]^d)=o(1),
\eeq
and 
\beq\label{error_2}
\P(  \xi_{2\floor{\ell/2}} \in \AA_{\rho_\SS t,d}\cap(\WW^{\mathrm{Normal}}_{\rho_\SS t,\delta})^c +[-2,2]^d)=o(1).
\eeq

We first prove \eqref{error_1}. Recall that $\AA_{\rho_\SS t,d} = \bigl\{ x \in \mathbb{R}^d : \|x\|_2 \le d^{-1/2} (\rho_\SS t)^{2/3} \bigr\}.$
For any $x \in \AA_{\rho_\SS t,d}^c$ and $z \in [-2,2]^d$, we have
\begin{align*}
\|x+z\|_2^2 
  &\ge \|x\|_2^2 - 2 \|x\|_2 \|z\|_2 + \|z\|_2^2 
  \ge \|x\|_2^2 - 4 d^{1/2} \|x\|_2 \\
  &\ge d^{-1} (\rho_\SS t)^{4/3} - 4 (\rho_\SS t)^{2/3}\geq  0.9 \; d^{-1} (\rho_\SS t)^{4/3},
\end{align*}
where the minimum is attained at $\|x\|_2 = d^{-1/2} (\rho_\SS t)^{2/3}$, and the final inequality follows since the first term has the leading order under the assumption $1 \ll d^3 \ll t$.

As $\lfloor \ell/2 \rfloor \le n_+/2 \le 3 (\rho_\SS t)/4$, we have, for any $x \in \AA_{\rho_\SS t,d}^c$ and $z \in [-2,2]^d$,
\[
\|x+z\|_2^2 \ge 0.9 \, d^{-1} (\rho_\SS t)^{4/3} \ge (3/4)^{4/3} \, d^{-1} (\rho_\SS t)^{4/3} \ge d^{-1} (\lfloor \ell/2 \rfloor)^{4/3},
\]
which implies $x+z\in  \AA_{\lfloor \ell/2 \rfloor,d}^c$ and consequently
\[
\AA_{\rho_\SS t,d}^c + [-2,2]^d \;\subseteq\; \AA_{\lfloor \ell/2 \rfloor,d}^c.
\]
Applying Corollary~\ref{normal_outlier} with $\beta=1/2$ then proves \eqref{error_1}:
$$\P( \xi_{2\floor{\ell/2}} \in \AA_{\rho_\SS t,d}^c+[-2,2]^d)\leq \P( \xi_{2\floor{\ell/2}} \in\AA^c_{\floor{\ell/2},d})=o(1).$$

To prove \eqref{error_2}, let $\ell \in [n_-,n_+]$ and take 
$x \in \AA_{\rho_\SS t,d} \cap (\WW^{\mathrm{Normal}}_{\rho_\SS t,\delta})^c$ 
and $z \in [-2,2]^d$. We choose $\eta,\delta$ appropriately so that, by 
Lemma~\ref{density_comparison}, either
\begin{align*}
\phi_{\Sigma, 2\floor{\ell/2}}(x+z)&\leq e^{\eta d} \phi_{\Sigma,\rho_\SS t}(x)< e^{\eta d}\left(\frac{2\pi e \rho_\SS t}{(1-\delta)(d+1)}\right)^{-d/2}<\left(\frac{4\pi e \floor{\ell/2}}{(1-\delta/2)(d+1)}\right)^{-d/2},
\end{align*}
or 
\begin{align*}
\phi_{\Sigma, 2\floor{\ell/2}}(x+z)\geq e^{-\eta d} \phi_{\Sigma,\rho_\SS t}(x)>e^{-\eta d}\left(\frac{2\pi e \rho_\SS t}{(1+\delta)(d+1)}\right)^{-d/2}>  \left(\frac{4\pi e  \floor{\ell/2}}{(1+\delta/2)(d+1)}\right)^{-d/2}. 
\end{align*}
This implies that $x+z \in (\WW^{\mathrm{Normal}}_{2\lfloor \ell/2 \rfloor,\, \delta/2})^c$, 
and hence 
\[
\AA_{\rho_\SS t,d} \cap (\WW^{\mathrm{Normal}}_{\rho_\SS t,\, \delta})^c + [-2,2]^d 
   \subseteq (\WW^{\mathrm{Normal}}_{2\lfloor \ell/2 \rfloor,\, \delta/2})^c.
\]
Thus \eqref{error_2} follows from applying Lemma \ref{typical_normal},
$$\P(  \xi_{2\floor{\ell/2}} \in \AA_{\rho_\SS t,d}\cap(\WW^{\mathrm{Normal}}_{\rho_\SS t,\delta})^c +[-2,2]^d)\leq \P(  \xi_{2\floor{\ell/2}} \in (\WW^{\mathrm{Normal}}_{2\lfloor \ell/2 \rfloor,\, \delta/2})^c)=o(1).$$

The proof is therefore completed.

\end{proof}

\subsection{Regime $k\gg |G|^{2/k}(\log k)^2$ with $k\lesssim \log|G|$} 

In this regime, the auxiliary process $\C_\SS$ is no longer well-approximated by a multivariate normal distribution. Since $k$ is sufficiently large, we instead employ tensorization techniques to establish concentration of the entropy of $\C_\SS$.
More precisely, throughout this section we consider the entropy of $\C_\SS$ conditioned on $N_\SS$, treating $N_\SS$ as a fixed constant. 

\subsubsection{Preliminaries}
Recall from \eqref{CDG_process} that the discrete-time process $(Y_n)_{n\geq 0}$ is defined by
\[
Y_n = -Y_{n-1} + \delta_n=\sum_{i=1}^n (-1)^{n-i}\delta_i,
\]
with $Y_0 = 0$ and $\{\delta_j\}_{j \geq 1}$ i.i.d.\ uniform on $\{e_i : i \in [k_\SS]\}$.  
Conditioned on $N_\SS$, $\C_\SS$ has the same law as $Y_{N_\SS}$, so we will study the entropic concentration of $Y_n$.

\subsubsection{Concentration of entropy for the auxiliary process}
Note that $Y_n$ is defined in the same spirit as the Chung--Diaconis--Graham process studied in \cite{eberhard2021mixing}. We can derive entropic concentration results of $Y_n$ by adapting and refining the argument in \cite{eberhard2021mixing}.

Let $\mu^{(n)}$ denote the law of $Y_n$, and let $\{\delta_j\}_{j \geq 1}$ be i.i.d.\ uniform on $\{e_i : i \in [k_\SS]\}$.  
Without loss of generality, we may rewrite
\[
Y_n = \sum_{i=1}^n (-1)^i \delta_i,
\]
which differs from the previous definition only by a relabeling of indices. The random entropy of $Y_n$ is defined by
$$
Q_n:=Q_{n}(\delta_1,\dots,\delta_{n})=-\log \left(\mu^{(n)}\left(\sum_{i=1}^n (-1)^i\delta_i\right)\right),
$$
and the varentropy of $Y_n$ refers to $\mathrm{Var}(Q_n)$. The following proposition establishes the concentration of the random entropy $Q_n$.

\begin{prop}\label{entropy_concentration_Y}
For any $\omega>0$ and $n\in\mathbb{N}$, we have
$$
\mathbb{P}\bigl(|Q_n - \mathbb{E}[Q_n]| \geq \omega\bigr) \leq \frac{n ((\log k_\SS)^2+2)}{2\omega^2}.
$$
\end{prop}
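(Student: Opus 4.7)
The plan is to apply Chebyshev's inequality, so it suffices to establish the variance bound $\Var(Q_n) \le n((\log k_\SS)^2+2)/2$. For this I would tensorize via the Efron--Stein inequality: writing $\delta^{(i)}$ for the tuple obtained from $\delta = (\delta_1, \ldots, \delta_n)$ by replacing $\delta_i$ with an independent copy $\delta_i' \sim \Unif\{e_1,\ldots,e_{k_\SS}\}$,
\[
\Var(Q_n) \;\le\; \tfrac{1}{2}\sum_{i=1}^n \E\!\left[(Q_n(\delta) - Q_n(\delta^{(i)}))^2\right],
\]
so the problem reduces to the per-coordinate estimate $\E[(Q_n(\delta) - Q_n(\delta^{(i)}))^2] \le (\log k_\SS)^2 + 2$ for each $i$.

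To prove the per-coordinate estimate, I would condition on $\delta_{-i} := (\delta_j)_{j\neq i}$. Since $Y_n = Y^{(-i)} + (-1)^i\delta_i$ with $Y^{(-i)} := \sum_{j\neq i}(-1)^j\delta_j$ deterministic given $\delta_{-i}$, as $\delta_i$ varies uniformly over $\{e_1,\ldots,e_{k_\SS}\}$ the random variable $Y_n$ takes the $k_\SS$ equally likely values $Y^{(-i)} + (-1)^i e_j$. Setting $p_j := \mu^{(n)}(Y^{(-i)} + (-1)^i e_j)$, a direct expansion gives
\[
\E\!\left[(Q_n(\delta) - Q_n(\delta^{(i)}))^2 \,\big|\, \delta_{-i}\right] \;=\; 2\,\Var_J(\log p_J),
\]
where $J \sim \Unif[k_\SS]$. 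Thus it remains to show the almost-sure dispersion bound $\Var_J(\log p_J) \le ((\log k_\SS)^2 + 2)/2$ for the log-densities at the $k_\SS$ neighbors of $Y_n$. Since these $k_\SS$ points are distinct, the probabilities satisfy $\sum_j p_j \le 1$, and AM--GM yields $\E_J[-\log p_J] \ge \log k_\SS$, providing one-sided control on the mean. The recursion $\mu^{(n)}(y) = k_\SS^{-1}\sum_l \mu^{(n-1)}(e_l - y)$ coming from $Y_n = -Y_{n-1}+\delta_n$ then compares $p_j$ with $p_{j'}$ at neighbors differing by $e_a - e_b$ through a path-counting/inductive argument, and combining these two inputs via a Jensen-type step yields the dispersion bound. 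Summing over $i$ and applying Chebyshev's inequality completes the proof.

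The main obstacle will be securing the precise additive constant $+2$ in the per-coordinate bound. A uniform log-ratio bound $|\log(p_j/p_{j'})| \le \log k_\SS + O(1)$ only produces $\Var_J(\log p_J) \lesssim (\log k_\SS + O(1))^2/4$, which carries an unwanted $O(\log k_\SS)$ cross term rather than the clean additive $+2$. The clean constant therefore calls for a refined averaging argument---plausibly an adaptation of the Chung--Diaconis--Graham analysis in \cite{eberhard2021mixing}---that exploits the constraint $\sum_j p_j \le 1$ together with the multinomial decomposition $Y_n = Y^+ - Y^- + Y_{\mathrm{err}}$ to show that only a small number of the $p_j$ can simultaneously be far from the mean in log-scale, tightening the variance estimate to the desired form.
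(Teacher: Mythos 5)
Your high-level framework (Efron--Stein tensorization followed by Chebyshev) matches the paper's, your identity $\E[(Q_n-Q_n^j)^2\mid\delta_{-j}]=2\Var_J(\log p_J)$ is correct, and you correctly diagnose that a naive uniform log-ratio bound leaves an unwanted $O(\log k_\SS)$ cross term. But that is exactly where you stop, and the ``refined averaging argument'' you gesture at is the whole content of the proof. Your proposed route via the recursion $\mu^{(n)}(y)=k_\SS^{-1}\sum_l\mu^{(n-1)}(e_l-y)$ and path-counting is not what the paper (nor \cite{eberhard2021mixing}, which you cite and which is indeed the paper's template) does, and it is not clear it produces the clean additive constant.

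The missing idea is to center $Q$ not at the conditional mean $\E_J[-\log p_J]$, nor at $\log k_\SS$, but at $f_j:=-\log\mu_j^{(n)}(\xi^j)$, where $\xi^j=\sum_{i\neq j}(-1)^i\delta_i$ and $\mu_j^{(n)}$ is its law; this is the random entropy of the walk with step $j$ surgically removed, and it is $\delta_{-j}$-measurable, so $\Var_J(\log p_J)\le\E_J\bigl[(Q-f_j)^2\mid\delta_{-j}\bigr]$. The quantity $Q-f_j$ has a sharply asymmetric law: (a) $Q-f_j\le\log k_\SS$ almost surely, because for any $x\in\mathrm{supp}(\mu)$ and any $y$ one has $\mu^{(n)}(y+(-1)^jx)\ge\mu_j^{(n)}(y)/k_\SS$; and (b) the lower tail decays exponentially, $\P(Q\le f_j-\lambda)\le e^{-\lambda}$, which follows from a short counting argument --- partition according to the value of $\delta_j=x$, note $\mu_j^{(n)}(y)\le e^{-\lambda}\mu^{(n)}(y+(-1)^jx)$ on the relevant event, and for each fixed $x$ the sum $\sum_y\mu^{(n)}(y+(-1)^jx)\le1$. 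Integrating the tail then gives $\E[(Q-f_j)^2]=2\int_0^\infty\lambda\,\P(|Q-f_j|>\lambda)\,d\lambda\le2\bigl(\int_0^\infty\lambda e^{-\lambda}\,d\lambda+\int_0^{\log k_\SS}\lambda\,d\lambda\bigr)=(\log k_\SS)^2+2$, with no cross term, precisely because the upper tail is a hard cutoff at $\log k_\SS$ while the lower tail is $e^{-\lambda}$. This two-sided tail estimate against $f_j$ is the step your plan is missing; the AM--GM observation $\E_J[-\log p_J]\ge\log k_\SS$ is true but plays no role in the actual argument, and should be replaced by (a) and (b) above.
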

\begin{proof}
It suffices to show $\mathrm{Var}(Q_n) \leq n ((\log k_\SS)^2+2)/2$, and then the bound follows by Chebyshev's inequality.
For simplicity of notation, we will write $Q=Q_n$ from now on.

For $1 \leq j \leq n$, let $\delta'_j$ be an independent copy of $\delta_j$ and define
\[
Q^j := Q_n(\delta_1, \dots, \delta_{j-1}, \delta'_j, \delta_{j+1}, \dots, \delta_n).
\]
By the Efron--Stein inequality (see, e.g.,  \cite{eberhard2021mixing}),
\[
\mathrm{Var}(Q) \leq \frac{1}{2} \sum_{j=1}^{n} \mathbb{E}\bigl[(Q - Q^j)^2\bigr].
\]
Thus, bounding $\mathbb{E}[(Q - Q^j)^2]$ for each $j = 1, \dots, n$ yields the desired variance estimate.

For simplicity of notation,  define $\xi^j=\sum_{i\neq j:1\leq i\leq n}(-1)^i \delta_i$ for each $1\leq j\leq n$. Let $\mu^{(n)}_j$ denote the law of $\xi^j$, i.e.,
$$\mu^{(n)}_j(x)=\P(\xi^j=x) \quad \text{ for }x\in \ZZ^{k_\SS},$$
and set  
$$f_j(\delta_1,\dots,\delta_{n})=-\log \mu^{(n)}_j(\xi^j).$$
As $f_j$ is independent from $\delta_j$, it is straightforward to observe
\begin{align}\label{tensorized_varentropy}
\nonumber \E[(Q-Q^j)^2]&\leq 2(\E[(Q-f_j(\delta_1,\dots,\delta_{n}))^2]+\E[(Q^j-f_j(\delta_1,\dots,\delta_{n})^2])\\
&=4\E[(Q-f_j(\delta_1,\dots,\delta_{n}))^2].
\end{align}
To bound $\E[(Q-f_j(\delta_1,\dots,\delta_{n}))^2]$ we consider $\P( |Q-f_j(\delta_1,\dots,\delta_{n})|>\lambda)$ for $\lambda\geq 0$. 

For a measure $\mu$ or a random variable $\xi$, we write $\mathrm{supp}(\mu)$ or $\mathrm{supp}(\xi)$ for the support.
Since for any $x\in\mathrm{supp}(\mu)$ and $y\in \mathrm{supp}(\xi^j)$ one trivially has
$\mu^{(n)}(y+(-1)^jx)\geq \mu^{(n)}_j(y)/k_\SS$,  it follows that
$$
\P(Q\geq f_j(\delta_1,\dots,\delta_{n})+\lambda)=\P(\mu^{(n)}(\xi^j+(-1)^j\delta_j) \leq e^{-\lambda}\mu^{(n)}_j(\xi^j))\leq \1_{\{\lambda\leq \log k_\SS\}}.
$$

On the other hand, note that
$$\{Q\leq  f_j(\delta_1,\dots,\delta_{n})-\lambda\}=\{\mu^{(n)}_j(\xi^j)\leq e^{-\lambda}\mu^{(n)}(\xi^j+(-1)^j\delta_j)\}.$$
We will consider the space 
$$M_\lambda=\{ (x,y)\in \mathrm{supp}(\mu)\times \mathrm{supp}(\xi^j): \mu^{(n)}_j(y)\leq e^{-\lambda}\mu^{(n)}(y+(-1)^j x)\}.$$
For a fixed $x\in \mathrm{supp}(\mu)$, define
$$M_{\lambda,x}=\{ y\in \mathrm{supp}(\xi^j): \mu^{(n)}_j(y)\leq  e^{-\lambda}\mu^{(n)}(y+(-1)^j x)\}.$$
By the definition of $M_{\lambda,x}$, we can obtain that
\begin{align*}
\P(Q\leq  f_j(\delta_1,\dots,\delta_{n})-\lambda)&= \P(\mu^{(n)}_j(\xi^j)\leq e^{-\lambda}\mu^{(n)}(\xi^j+(-1)^j\delta_j))\\
&\leq \P( (\delta_j,\xi^j)\in M_\lambda)=\sum_{x\in  \mathrm{supp}(\mu)} \P( \delta_j=x,\xi^j \in M_{\lambda,x})\\
&=\frac{1}{k_\SS} \sum_{x\in \mathrm{supp}(\mu)}\P(\xi^j \in M_{\lambda,x})=\frac{1}{k_\SS} \sum_{x\in  \mathrm{supp}(\mu)}\sum_{y\in M_{\lambda,x}} \mu^{(n)}_j(y)\\
&\leq \frac{1}{k_\SS} \sum_{x\in  \mathrm{supp}(\mu)}\sum_{y\in M_{\lambda,x}}e^{-\lambda} \mu^{(n)}(y+(-1)^jx)\leq e^{-\lambda}
\end{align*}
since for any $ x\in  \mathrm{supp}(\mu)$, $\sum_{y\in M_{\lambda,x}} \mu^{(n)}(y+(-1)^jx)\leq 1$.

It follows that, 
\begin{align*}
\E[(Q-f_j(\delta_1,\dots,\delta_{n}))^2]&=2 \int_0^\infty \lambda \; \P(  |Q-f_j(\delta_1,\dots,\delta_{n})|>\lambda) \; d\lambda \\
&\leq 2\left( \int_{0}^\infty \lambda e^{-\lambda} \; d\lambda+ \int_0^{\log k_\SS} \lambda \;d\lambda  \right)\leq (\log k_\SS)^2+2
\end{align*}
Therefore, by \eqref{tensorized_varentropy} and the Efron-Stein inequality, we have 
$$\mathrm{Var}(Q)\leq \frac{n}{2}((\log k_\SS)^2+2)$$
and the proof is completed by applying Chebyshev's inequality.
\end{proof}

\subsubsection{Entropic concentration}
We now proceed to establish entropic concentration for the auxiliary process $\C_\SS$. 
This step requires more care, as $\C_\SS$ is distributed as $Y_{N_\SS}$ with a random index $N_\SS$.

Suppose $k \gg |G|^{2/k} (\log k)^2$ and $k \lesssim \log |G|$. 
Let $1\ll \tau \ll \frac{k}{t_0(k,G)^{1/2}(\log k)}$ and define
\beq\label{typical_middle_regime}
\typ^\SS:=\{N_\SS\in  [\rho_\SS t- \tau t^{1/2}, \rho_\SS t+\tau t^{1/2}]\}=\{N_\SS\in [n_-, n_+]\}
\eeq 
where $n_-=n_-(t):=\lceil \rho_\SS t-\tau t^{7/12}\rceil, n_+=n_+(t):=\lfloor  \rho_\SS t+ \tau t^{1/2} \rfloor$.

Let $h:\mathbb{R}_+ \to \mathbb{R}_+$ denote the entropy of a rate-$1$ simple random walk on $\ZZ$, see Lemma \ref{entropy_supplementary}. For each $\ell\in  [n_-, n_+]$, define 
\beq\label{set_W_Y_minus}
\WW_{Y,\ell}^-(\delta,t):=\left\{ w\in \mathrm{supp}(Y_\ell): \P(Y_\ell=w)\geq  e^{ -k_\SS h(t /k)-\delta k}\right\},
\eeq
and 
\beq\label{set_W_Y_plus}
\WW_{Y,\ell}^+(\delta,t):=\left\{ w\in \mathrm{supp}(Y_\ell): \P(Y_\ell=w)\leq  e^{ -k_\SS h(t /k)+\delta k}\right\}.
\eeq

In addition, define
\begin{align}
\label{WSminus_middle_regime} \WW_\SS^-(\delta, t)&:=\cup_{\ell\in   [n_-, n_+]} \WW^-_{Y,\ell}(\delta,t),\\
\label{WSplus_middle_regime}\WW_\SS^+(\delta,t)&:= \WW^+_{Y,n_+}(\delta,t)\cup  \WW^+_{Y,n_+-1}(\delta,t).
\end{align}
Note that 
$
\mathrm{supp}(Y_\ell)\subseteq 
\{w\in \ZZ^{k_\SS} : \sum_{i=1}^{k_\SS} w_i = \1\{\ell \ \text{is odd}\} \},
$
so the union in~\eqref{WSplus_middle_regime} is disjoint.

The following lemma is analogous to Lemma~\ref{WS_size} and is intended to show that, 
prior to the proposed mixing time $t_0(k,G)$, the process $\C_\SS$ 
is confined to a set of relatively small size.

\begin{lemma}\label{WS_size_middle_regime}
Assume that $k \gg |G|^{2/k} (\log k)^2$ and $k \lesssim \log |G|$. Fix $\ep>0$ and set $t = (1-\ep)\,t_0(k,G)$. 
For $\delta>0$ sufficiently small relative to $\ep$, we have
\[
|\WW^-_\SS(\delta,t)| \;\leq\; |G|^{\rho_\SS},
\]
where $\WW^-_\SS(\delta,t)$ is defined as in~\eqref{WSminus_middle_regime}. 

\end{lemma}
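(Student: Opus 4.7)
The plan is to bound $|\WW^-_{Y,\ell}(\delta,t)|$ for each fixed $\ell$ by an elementary probability--volume argument, union-bound over $\ell \in [n_-, n_+]$, and then apply Lemma~\ref{entropy_supplementary} together with the definition of $t_0(k,G)$ to extract the exponent $\rho_\SS$.

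First, fix $\ell \in [n_-, n_+]$. By the defining inequality in \eqref{set_W_Y_minus}, every $w \in \WW^-_{Y,\ell}(\delta,t)$ carries probability at least $e^{-k_\SS h(t/k)-\delta k}$ under the law of $Y_\ell$; since these probabilities sum to at most $1$, this immediately yields
\[
|\WW^-_{Y,\ell}(\delta,t)| \;\le\; e^{k_\SS h(t/k)+\delta k}.
\]
A union bound over the at most $n_+-n_-+1 \lesssim \tau t^{1/2}$ admissible values of $\ell$ then gives
\[
|\WW^-_\SS(\delta,t)| \;\le\; (n_+-n_-+1)\, e^{k_\SS h(t/k)+\delta k}.
\]

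Next, setting $t=(1-\ep)\,t_0(k,G)$ and using the identity $2\pi e\,t_0(k,G)/k = |G|^{2/(k-1)}$ together with Lemma~\ref{entropy_supplementary}, I would obtain
\[
k_\SS h(t/k)
\;=\; \rho_\SS \log|G| + \frac{\rho_\SS \log|G|}{k-1} + \frac{k_\SS}{2}\log(1-\ep) + O\!\bigl(k_\SS(t/k)^{-1/4}\bigr),
\]
after rewriting $\tfrac{k_\SS}{k-1}\log|G| = \rho_\SS\log|G| + \tfrac{\rho_\SS\log|G|}{k-1}$. To conclude $|\WW^-_\SS(\delta,t)| \le |G|^{\rho_\SS}$, it then suffices to show
\[
\frac{\rho_\SS \log|G|}{k-1} + \frac{k_\SS}{2}\log(1-\ep) + O\!\bigl(k_\SS(t/k)^{-1/4}\bigr) + \delta k + \log(n_+-n_-+1) \;\le\; 0.
\]
The negative contribution $\tfrac{k_\SS}{2}\log(1-\ep) \asymp -\ep k$ provides the necessary slack. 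The regime hypothesis $k \gg |G|^{2/k}(\log k)^2$ implies $k\log k \gg \log|G|$, so $\log|G|/(k-1) = o(k)$ and $\log(n_+-n_-+1) \le O(\log t_0) = O(\log k + \log|G|/k) = o(k)$. Choosing $\delta$ sufficiently small relative to $\ep$ then absorbs the $\delta k$ term.

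The hard part is handling the error $O(k_\SS(t/k)^{-1/4})$: in the boundary sub-regime $k \asymp \log|G|$ the factor $(t/k)^{-1/4} \asymp |G|^{-1/(2(k-1))}$ is only bounded above by a constant rather than vanishing, so naively this term could be of the same order as $\ep k$. The cleanest workaround is to avoid the asymptotic expansion at $t/k$ directly, and instead compare with the entropy at $t_0$ via the mean value theorem: one writes $k_\SS[h(t/k)-h(t_0/k)] = -\ep\,t_0\,h'(s^*)\,k_\SS/k$ for some $s^* \in [t/k,\, t_0/k]$, and uses that $s\,h'(s)$ is bounded away from zero for $s\gtrsim 1$ to conclude $k_\SS[h(t/k)-h(t_0/k)] \le -c_\ep k$ for a suitable $c_\ep > 0$. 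Combined with $k_\SS h(t_0/k) = \rho_\SS \log|G| + o(k)$ in the regime, this yields the required inequality.
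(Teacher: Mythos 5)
Your approach is essentially the paper's: you use the sum-of-probabilities bound $|\WW^-_{Y,\ell}| \le e^{k_\SS h(t/k)+\delta k}$, union-bound over $\ell\in[n_-,n_+]$, and then compare $h(t/k)$ to $h(t_0/k)$ to extract the exponent. The paper does exactly this, choosing $\delta$ small so that the factor $e^{-\alpha\ep k_\SS/2}$ dominates the union-bound multiplicity $\tau t^{1/2}\ll k$.

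One difference worth flagging: you propose a unified mean-value-theorem treatment for the whole range $k\lesssim\log|G|$, justified by the claim that $s\,h'(s)$ is bounded away from zero for $s\gtrsim 1$. That statement is true but is not supplied by Lemma~\ref{entropy_supplementary}, which only gives the asymptotics of $h$ itself (not of $h'$); so on the record you would need a separate argument (e.g., differentiating the expansion $h(s)=\tfrac12\log(2\pi e s)+O(s^{-1/4})$ with a derivative-level error bound). The paper sidesteps this by splitting cases: for $k\ll\log|G|$ it uses the $h$-asymptotics directly (there $(t/k)^{-1/4}=o(1)$, so the error is harmless), and it only invokes the MVT in the boundary case $k\asymp\log|G|$, where $t_0/k$ and $t/k$ lie in a \emph{compact} interval bounded away from $0$, so $h'>0$ and continuity already give a positive lower bound on $h'(s^*)$ without any quantitative derivative asymptotics. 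Also note that in the $k\asymp\log|G|$ sub-regime the paper uses that $t_0=t_{\mathrm{ent}}(k,|G|)$ gives $h(t_0/k)=\tfrac1k\log|G|$ \emph{exactly} (by definition of the entropic time), rather than the identity $2\pi e\,t_0/k=|G|^{2/(k-1)}$ which you invoke and which holds only in the $k\ll\log|G|$ branch of the definition \eqref{cutoff_time}; your later claim $k_\SS h(t_0/k)=\rho_\SS\log|G|+o(k)$ is nonetheless correct in both sub-regimes, just for different reasons. In short: the structure matches, and your proof would go through with the extra lemma on $h'$, but the paper's case split is the more economical route given what Lemma~\ref{entropy_supplementary} actually provides.
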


\begin{proof}
We begin by considering the regime  $k \gg |G|^{2/k} (\log k)^2$ with $k\ll \log|G|$. 
By the definition \eqref{cutoff_time} of $t_0=t_0(k,G)$ and the asymptotics of $h(\cdot)$ in Lemma \ref{entropy_supplementary}, in this regime we have 
$$ h(t_0/k)= \frac{1}{k-1}\log|G|+O( (t_0/k)^{-1/4})=\frac{1}{k}\log|G|+o(1).$$
For $t=(1-\ep)t_0$, again by the asymptotics of $h(\cdot)$, there exists some constant $\alpha>0$ such that 
$$h(t/k)=h((1-\ep)t_0/k)=h(t_0/k)+\frac{1}{2}\log(2\pi e(1-\ep))+O((t/k)^{-1/4})\leq \frac{1}{k}\log|G|-\alpha\ep$$
when $k$ is sufficiently large.

Choose $\delta\leq \alpha \rho_\SS \ep/2$, so that by the definition \eqref{set_W_Y_minus}, for any $\ell$,
\begin{align*}
1&\geq \sum_{w\in  \WW^-_{Y,\ell}(\delta,t)}\P(Y_\ell=w)\geq e^{ -k_\SS h(t/k)-\delta k}\cdot |\WW^-_{Y,\ell}(\delta,t)|\geq |G|^{-\rho_\SS}e^{ \alpha\ep k_\SS -\delta k} \cdot |\WW^-_{Y,\ell}(\delta,t)|\\
&\geq |G|^{-\rho_\SS}e^{ \alpha\ep k_\SS /2}\cdot |\WW^-_{Y,\ell}(\delta,t)|.
\end{align*}
 This implies $|\WW_{Y,\ell}(\delta,t)|\leq |G|^{\rho_\SS}e^{-\alpha \ep k_\SS/2}$ and therefore
$$|\WW^-_\SS(\delta,t)|=|\cup_{\ell \in [n_-,n_+]} \WW^-_{Y,\ell}(\delta,t)|\leq \sum_{\ell \in [n_-,n_+]} |\WW^-_{Y,\ell}(\delta,t)|\leq 4\tau t^{1/2}|G|^{\rho_\SS}e^{-\alpha \ep k_\SS/2}\leq |G|^{\rho_\SS}$$
as $\tau t^{1/2}\ll k\ll e^{\alpha \ep k_\SS/2}$.

The proof for the regime $k \asymp \log |G|$ follows analogously. In this case, we have exactly $h(t_0/k) = \frac{1}{k}\log |G|$. By the mean value theorem and Lemma~\ref{entropy_supplementary}, there exists a constant $\alpha' > 0$ such that 
\[
h(t/k) \leq h(t_0/k) - \alpha'\varepsilon 
= \frac{1}{k}\log |G| - \alpha'\varepsilon .
\]
Choosing $\delta \leq \tfrac{1}{2}\alpha'\rho_\SS\ep$, 
the remainder of the argument follows identically as before.

\end{proof}

Next, we establish an analogue of Lemma~\ref{set_comparison} 
for the regime $k \gg |G|^{2/k} (\log k)^2$ and $k \lesssim \log |G|$ under consideration.

\begin{lemma}\label{set_comparison_Y}
Assume that $k \gg |G|^{2/k} (\log k)^2$ and $k \lesssim \log |G|$.
Let $\ep>0$ be fixed and set $t = (1+\ep)t_0(k,G)$. For $\delta$ sufficiently small relative to $\ep$, we have
$$\WW^+_\SS(\delta,t) \subseteq \{w\in \ZZ^{k_\SS}: \P(\C_\SS=w|\typ^\SS)\leq |G|^{-\rho_\SS}\}.$$

\end{lemma}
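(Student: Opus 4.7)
The plan is to decompose
\[
\P(\C_\SS = w \mid \typ^\SS) \;=\; \frac{1}{\P(\typ^\SS)}\sum_{\ell \in [n_-, n_+]} \P(Y_\ell = w)\,\P(N_\SS = \ell)
\]
and to bound each $\P(Y_\ell = w)$ by comparison with the value at the endpoint $\ell = n_+$ (resp.\ $n_+-1$), where the defining bound for $\WW^+_\SS$ is directly available. Because $\mathrm{supp}(Y_\ell) \subseteq \{y \in \ZZ^{k_\SS} : \sum_i y_i = \1\{\ell\text{ odd}\}\}$, the two sets $\WW^+_{Y,n_+}$ and $\WW^+_{Y,n_+-1}$ are disjoint, so I split into the two corresponding cases. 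I treat $w \in \WW^+_{Y,n_+}$ (the case $w \in \WW^+_{Y,n_+-1}$ is identical with $n_+$ replaced by $n_+-1$); then only indices $\ell \in [n_-,n_+]$ of the same parity as $n_+$ contribute.

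The core step is a monotonicity-in-$\ell$ estimate. From $Y_{\ell+2} = Y_\ell + (\delta_{\ell+2} - \delta_{\ell+1})$, with $\delta_{\ell+1}$ and $\delta_{\ell+2}$ independent of $Y_\ell$ and $\P(\delta_{\ell+1} = \delta_{\ell+2}) = 1/k_\SS$,
\[
\P(Y_{\ell+2} = w) \;\geq\; \P\bigl(Y_\ell = w,\, \delta_{\ell+1} = \delta_{\ell+2}\bigr) \;=\; k_\SS^{-1}\,\P(Y_\ell = w).
\]
Iterating over $(n_+-\ell)/2$ same-parity steps and applying the defining bound for $\WW^+_{Y,n_+}$ yields
\[
\P(Y_\ell = w) \;\leq\; k_\SS^{(n_+-\ell)/2}\,\P(Y_{n_+} = w) \;\leq\; k_\SS^{\tau t^{1/2}}\,e^{-k_\SS h(t/k) + \delta k}.
\]
Since $\tau \ll k/(t^{1/2}\log k)$ by the choice in \eqref{typical_middle_regime} and $\log k_\SS \leq \log k$, the geometric loss satisfies $k_\SS^{\tau t^{1/2}} = e^{\tau t^{1/2}\log k_\SS} = e^{o(k)}$. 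Summing over the relevant $\ell$ and using $\sum_\ell \P(N_\SS=\ell) \leq 1$ gives $\P(\C_\SS = w,\typ^\SS) \leq e^{-k_\SS h(t/k) + \delta k + o(k)}$.

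It remains to show $k_\SS h(t/k) \geq \rho_\SS \log|G| + c_\ep k$ for some $c_\ep = c_\ep(\ep) > 0$. In the subregime $k \ll \log|G|$, $t_0/k = |G|^{2/(k-1)}/(2\pi e) \to \infty$, so Lemma~\ref{entropy_supplementary} yields $k_\SS h(t/k) = \rho_\SS \log|G| + (k_\SS/2)\log(1+\ep) + o(k)$, where the residual $O(\log|G|/k) = O(\log k) = o(k)$ is controlled by the regime hypothesis $k \gg |G|^{2/k}(\log k)^2$. In the subregime $k \asymp \log|G|$, $t_0 = t_{\mathrm{ent}}$ so $k h(t_0/k) = \log|G|$ exactly, and since $t_0/k = \Theta(1)$ with $h$ strictly increasing and smooth there, the mean value theorem gives $h((1+\ep)t_0/k) - h(t_0/k) \geq c'_\ep > 0$. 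In either case, choosing $\delta$ sufficiently small that $\delta + o(1) < c_\ep$ yields $\P(\C_\SS = w,\typ^\SS) \leq \tfrac12 |G|^{-\rho_\SS}$, and dividing by $\P(\typ^\SS) \geq 1/2$ produces the desired inclusion. The main obstacle is the monotonicity step: the geometric loss $k_\SS^{(n_+-\ell)/2}$ could a priori be catastrophic, and it is precisely the choice $\tau \ll k/(t^{1/2}\log k)$ that keeps the cumulative loss $e^{o(k)}$, small enough to be absorbed into the $c_\ep k$ slack between $k_\SS h(t/k)$ and $\rho_\SS \log|G|$.
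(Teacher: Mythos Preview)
Your proof is correct and follows essentially the same approach as the paper: decompose $\P(\C_\SS=w\mid\typ^\SS)$ over $\ell\in[n_-,n_+]$ of the relevant parity, use a pairwise-cancellation monotonicity estimate to transfer the bound from $\ell$ to the endpoint $n_+$ (or $n_+-1$), absorb the geometric loss $k_\SS^{O(\tau t^{1/2})}=e^{o(k)}$ using $\tau\ll k/(t^{1/2}\log k)$, and conclude via the gap $k_\SS h((1+\ep)t_0/k)\geq \rho_\SS\log|G|+c_\ep k$ established separately in the two subregimes $k\ll\log|G|$ and $k\asymp\log|G|$. Your monotonicity exponent $(n_+-\ell)/2$ is in fact slightly sharper than the paper's $n_*-\ell$, but both suffice.
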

\begin{proof}

Our goal is to show that any $w\in \WW^+_\SS(\delta,t)$ satisfies $\P(\C_\SS=w|\typ^\SS)\leq |G|^{-\rho_\SS}$. Due to the law of $\C_\SS$ given in Proposition \ref{auxiliary_law}, it suffices to consider $w \in \ZZ^{k_\SS}$ such that either $\sum_{i=1}^{k_\SS} w_i = 0$ or $ \sum_{i=1}^{k_\SS} w_i = 1$.

Without loss of generality, we first consider the case $\sum_{i=1}^{k_\SS} w_i = 0$, and
the case $\sum_{i=1}^{k_\SS} w_i = 1$ can be treated in the same way.  
For $w \in \ZZ^{k_\SS}$ with $\sum_{i=1}^{k_\SS} w_i = 0$, we have
\[
\P(\C_\SS = w| \typ^\SS)
= \sum_{\ell \in [n_-, n_+] \cap 2\ZZ} \P(Y_\ell = w)\, \P(N_\SS = \ell| \typ^\SS)
\le \max_{\ell \in [n_-, n_+] \cap 2\ZZ} \P(Y_\ell = w).
\]
Let $n_* \in \{\, n_+,\, n_+-1 \,\} \cap 2\ZZ$ denote the unique even number in this set.  
Observe that for any $\ell \in [n_-, n_+] \cap 2\ZZ$, the difference $n_* - \ell$ is even.  
By choosing the steps between $\ell$ and $n_*$ so that they cancel pairwise, we obtain
\[
\P(Y_{n_*} = w) \ge \P(Y_\ell = w) \left(\frac{1}{k_\SS}\right)^{n_* - \ell}.
\]
 Hence, by the definition of $\WW^+_\SS(\delta,t) $,
\begin{align*}
\P(\C_\SS=w|\typ^\SS)&\leq  \max_{\ell \in [n_-, n_+] \cap 2\ZZ} \P(Y_\ell = w)\leq  \max_{\ell \in [n_-, n_+] \cap 2\ZZ}(k_\SS)^{n_*-\ell}\P(Y_{n_*}=w)\\
&\leq e^{\delta k}e^{-k_\SS h(t/k)+\delta k}= e^{-k_\SS h(t/k)+2\delta k},
\end{align*}
where the last line follows from noting $ (k_\SS)^{n_*-\ell}\leq e^{\delta k}$ from the assumption $\tau t^{1/2}(\log k)\ll k$.

As discussed in the proof of Lemma \ref{WS_size_middle_regime}, we have $h(t_0/k)=\frac{1}{k}\log|G|+o(1)$. Furthermore, for $t=(1+\ep)t_0$, by analogous arguments to the proof of Lemma \ref{WS_size_middle_regime}
we can obtain
$$h(t/k)\geq h(t_0/k)+\alpha\ep$$ 
for some $\alpha>0$. Thus, when $\delta$ is sufficiently small compared to $\ep$, the desired conclusion follows
$$\P(\C_\SS=w|\typ^\SS)\leq e^{-\rho_\SS\log|G|-\alpha\ep k_\SS+2\delta k}\leq |G|^{-\rho_\SS}.$$

The case $\sum_{i=1}^{k_\SS} w_i = 1$ is handled in the same way, with $2\ZZ$ replaced by $2\ZZ+1$ in the index set.
\end{proof}

We are now ready to prove entropic concentration in Regime (ii).

\begin{prop}\label{CS_entropy_concentration_middleregime}
Assume $k \gg |G|^{2/k} (\log k)^2$ and $k \lesssim \log |G|$.  
Let $\varepsilon>0$ be fixed and  $\delta$ is chosen sufficiently small relative to $\ep$. We have:
\begin{enumerate}[label=(\roman*)]
    \item If $t = (1-\ep)t_0(k,G)$, then
$$
\P(\C_\SS(t) \in \WW_\SS^-(\delta,t)) = 1 - o(1).
$$
    
    \item If $t= (1+\ep)t_0(k,G)$, then
$$
\P(\{\C_\SS(t) \in \WW_\SS^+(\delta,t)\}\cap \typ^{\SS}) = 1 - o(1).
$$
\end{enumerate}

\end{prop}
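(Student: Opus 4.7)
Both parts begin by conditioning on $\typ^\SS \cap \{N_\SS = \ell\}$ for $\ell \in [n_-, n_+]$, under which $\C_\SS(t) \overset{d}{=} Y_\ell$ by Proposition~\ref{auxiliary_law} and the coupling~\eqref{coupling_CDG_process}. Writing $Q_\ell := -\log \mu^{(\ell)}(Y_\ell)$, the argument relies on three common ingredients. First, Proposition~\ref{entropy_concentration_Y} gives $\mathrm{Var}(Q_\ell) \leq \ell((\log k_\SS)^2 + 2)/2$; under the hypothesis $k \gg |G|^{2/k}(\log k)^2$ (which also forces $\log t = O(\log k)$) one has $t(\log k)^2/k^2 = o(1)$ in both subregimes $k \ll \log |G|$ and $k \asymp \log|G|$, so Chebyshev's inequality yields $|Q_\ell - \mathbb{E}[Q_\ell]| \leq \delta k/3$ with probability $1-o(1)$, uniformly in $\ell \in [n_-, n_+]$. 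Second, Corollary~\ref{Y_entropy} gives $\mathbb{E}[Q_\ell] = H(Y_\ell) = k_\SS h(\ell/k_\SS) + o(\delta k)$, since each error term there is $o(k)$ under the same assumptions. Third, a mean-value estimate combined with $\tau \ll k/(t^{1/2}\log k)$ gives $k_\SS |h(\ell/k_\SS) - h(t/k)| = O(k\tau/t^{1/2}) = o(\delta k)$. Together these show that $Q_\ell$ concentrates within $\delta k$ of $k_\SS h(t/k)$ with probability $1-o(1)$, uniformly in $\ell \in [n_-, n_+]$.

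\textbf{Part (i).} The upper-tail bound above gives $Q_\ell \leq k_\SS h(t/k) + \delta k$, equivalently $\mu^{(\ell)}(Y_\ell) \geq e^{-k_\SS h(t/k) - \delta k}$, with high probability; thus $Y_\ell \in \WW^-_{Y, \ell}(\delta, t) \subseteq \WW^-_\SS(\delta, t)$. Averaging over $\ell \in [n_-, n_+]$ and using $\P(\typ^\SS) = 1-o(1)$ concludes the proof.

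\textbf{Part (ii) and the main obstacle.} The principal subtlety is that $\WW^+_\SS$ is defined via $\mu^{(n_+)}$ or $\mu^{(n_+-1)}$ rather than the conditional law $\mu^{(\ell)}$ of $\C_\SS$ given $\{N_\SS = \ell\}$, so one must control $\mu^{(n_*(\ell))}(Y_\ell)$ where $n_*(\ell) \in \{n_+, n_+ - 1\}$ is chosen to match the parity of $\ell$. In this regime a local central limit theorem is unavailable, so a pointwise comparison of $\mu^{(\ell)}$ and $\mu^{(n_*(\ell))}$ appears difficult. We bypass this via a Radon--Nikodym/Markov argument: since $\mathrm{supp}(\mu^{(\ell)}) \subseteq \mathrm{supp}(\mu^{(n_*(\ell))})$ by matching parity and $n_*(\ell) \geq \ell$, the ratio $Z := \mu^{(n_*(\ell))}(Y_\ell)/\mu^{(\ell)}(Y_\ell)$ is well defined, and
\[
\mathbb{E}[Z] = \sum_{w \in \mathrm{supp}(\mu^{(\ell)})} \mu^{(n_*(\ell))}(w) \leq 1.
\]
Markov's inequality therefore gives $\P(Z > e^{\delta k/3}) \leq e^{-\delta k/3} = o(1)$. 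Combining with the lower-tail bound $Q_\ell \geq k_\SS h(t/k) - \delta k/3$ yields
\[
\mu^{(n_*(\ell))}(Y_\ell) \leq e^{\delta k/3}\, \mu^{(\ell)}(Y_\ell) \leq e^{-k_\SS h(t/k) + \delta k}
\]
with high probability, so $Y_\ell \in \WW^+_{Y, n_*(\ell)}(\delta, t) \subseteq \WW^+_\SS(\delta, t)$ on $\typ^\SS$.
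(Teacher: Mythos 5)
Your proposal is correct and follows essentially the same route as the paper. Part~(i) is the same Chebyshev concentration argument based on Proposition~\ref{entropy_concentration_Y} and Corollary~\ref{Y_entropy}, and for part~(ii) your Radon--Nikodym/Markov bound on $Z=\mu^{(n_*(\ell))}(Y_\ell)/\mu^{(\ell)}(Y_\ell)$ with $\E[Z]\le 1$ is precisely the paper's computation $\sum_{y\in M_{\omega,\ell}}\mu^{(\ell)}(y)\le e^{-\omega}\sum_{y\in M_{\omega,\ell}}\mu^{(n_\ell)}(y)\le e^{-\omega}$, only written as a likelihood-ratio Markov inequality with the aggressive threshold $e^{\delta k/3}$ in place of a slowly diverging $e^{\omega}$; both give $o(1)$. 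One small caveat: your assertion that $\tau\ll k/(t^{1/2}\log k)$ alone forces $O(k\tau/t^{1/2})=o(\delta k)$ is not quite right --- since $k\gg t(\log k)^2$, one has $k/(t^{1/2}\log k)\gg t^{1/2}$, so this constraint on $\tau$ does not by itself give $\tau\ll t^{1/2}$; one must additionally choose $\tau\ll t^{1/2}$ (which is permissible since $\tau$ is a free parameter required only to diverge), a point the paper also leaves implicit in asserting $\ell=(1+o(1))\rho_\SS t$.
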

\begin{proof}
Recall that $\mu^{(\ell)}$ denotes the law of $Y_\ell$ and $Q_\ell=-\log \mu^{(\ell)}(Y_\ell)$ denotes the random entropy of $Y_\ell$. We begin the proof of (i) by showing that for $t=(1-\ep)t_0(k,G)$, $\delta>0$ sufficiently small relative to $\ep$, 
\beq
 \P(Y_\ell \notin \WW^-_{Y,\ell}(\delta,t))=o(1) \quad \text{ for any }\ell\in [n_-,n_+].
 \eeq
 From now on we drop the indices and write $\WW^{\pm}_\SS=\WW^{\pm}_\SS(\delta,t)$, $\WW^{\pm}_{Y,\ell}=\WW^{\pm}_{Y,\ell}(\delta,t)$.
 
As we set $t = (1-\ep)t_0(k,G)\asymp k|G|^{2/k}$, the regime under consideration ensures that  $k^2 \gg t (\log k)^2$, which implies $k \gg \log t$.
Moreover, since $\ell = (1+o(1))\,\rho_\SS t$, it follows from 
Corollary~\ref{Y_entropy} together with Lemma~\ref{entropy_supplementary} that

\[
\E[Q_\ell]=H(Y_\ell) 
= k_\SS \cdot h\!\left(\frac{\ell}{k_\SS}\right) + o(k)
= k_\SS \cdot h\!\left(\frac{\rho_\SS t}{k_\SS}\right) + o(k) 
= k_\SS \cdot h(t/k) + o(k).
\]
By Lemma~\ref{entropy_concentration_Y} and the assumption that  $k^2 \gg t (\log k)^2$,
\begin{align}\label{error_Y}
\nonumber \P(Y_\ell \notin \WW^-_{Y,\ell})&=\P(-\log \mu^{(\ell)}(Y_\ell)\geq   k_\SS h(t /k)+\delta k)\\
\nonumber &\leq \P( Q_\ell-\E[Q_\ell]\geq  k_\SS h(t /k)-\E[Q_\ell]+\delta k)\leq  \P( |Q_\ell-\E[Q_\ell]|\geq  \delta k/2)\\
&\leq \frac{4\ell(\log k)^2}{(\delta k)^2}=o(1).
\end{align}

By the definition $\WW_\SS^-=\cup_{\ell\in   [n_-, n_+]}  \WW^-_{Y,\ell}$, it is then immediate that
\begin{align*}
\P(\C_\SS \notin \WW_\SS^-)&\leq \sum_{\ell \in [n_-,n_+]} \P( Y_{N_\SS} \notin \WW_\SS^-,N_\SS=\ell)+\P(N_\SS\notin [n_-,n_+])\\
&=\sum_{\ell \in [n_-,n_+]}\P( Y_\ell\notin \WW_\SS^- )\P(N_\SS=\ell)+o(1)\\
&\leq \sum_{\ell \in [n_-,n_+]} \P(Y_\ell \notin \WW_{Y,\ell})\P(N_\SS=\ell)+o(1)=o(1).
\end{align*}
This proves part (i). 

\medskip
Next, we turn to the proof of (ii). Let $t\geq (1+\ep)t_0(k,G)$, and observe
$$
\P(\{\C_\SS \in \WW_\SS^+\}\cap \typ^{\SS})=\sum_{\ell\in [n_-,n_+]}\P( Y_\ell \in \WW_\SS^+)\P(N_\SS=\ell) \geq \min_{\ell\in [n_-,n_+]}\P( Y_\ell \in \WW_\SS^+)\P(N_\SS\in [n_-,n_+]).
$$
Hence it remains to show, for all $\ell \in [n_-,n_+]$,
$$\P( Y_\ell \in \WW_\SS^+)=1-o(1).$$
Let $n_\ell\in \{n_+,n_+-1\}$ denote the unique number satisfying $(n_\ell-\ell) \mod 2 \equiv 0$ (and note that $n_\ell \geq \ell$), so that 
 $$\{Y_\ell\in  \WW_\SS^+\}=\{ Y_\ell\in  \WW^+_{Y,n_\ell}\}.$$
Take $\omega \gg 1$ to be diverging arbitrarily slowly. We can note that trivially $Y_\ell\in \mathrm{supp}(\mu^{(n_\ell)})$ and
\begin{align*}
\P( Y_\ell \in \WW_\SS^+)&=\P( Y_\ell\in \WW_{Y,n_\ell})= \P(\mu^{(n_\ell)}(Y_\ell)\leq e^{-k_\SS h(t/k)+\delta k})\\
&\geq \P( \mu^{(n_\ell)}(Y_\ell)\leq e^{\omega} \mu^{(\ell)}(Y_\ell)\leq e^{-k_\SS h(t/k)+\delta k}).
\end{align*}
Thus,
$$\P( Y_\ell \notin \WW_\SS^+)\leq \P(  \mu^{(n_\ell)}(Y_\ell)>  e^{\omega} \mu^{(\ell)}(Y_\ell))+\P(e^{\omega} \mu^{(\ell)}(Y_\ell)> e^{-k_\SS h(t/k)+\delta k}).$$

To address the first term, let $M_{\omega,\ell}=\{y\in \mathrm{supp}(Y_\ell): \mu^{(n_\ell)}(y)\geq e^{\omega}\mu^{(\ell)}(y)\}$, and we have
\begin{align*}
\P(  \mu^{(n_\ell)}(Y_\ell)\geq  e^{\omega} \mu^{(\ell)}(Y_\ell))&=\P(Y_\ell\in M_{\omega,\ell})=\sum_{y\in M_{\omega,\ell}}\mu^{(\ell)}(y)\\
&\leq \sum_{y\in M_{\omega,\ell}}e^{-\omega}\mu^{(n_\ell)}(y)\leq e^{-\omega}=o(1).
\end{align*}
Repeating the argument that  leads to \eqref{error_Y}, we can show that  
$\P(e^{\omega} \mu^{(\ell)}(Y_\ell)> e^{-k_\SS h(t/k)+\delta k})=o(1)$ when $1\ll \omega\ll k$. Therefore, we have shown that for all $\ell \in [n_-,n_+]$,
$$\P( Y_\ell \notin \WW_\SS^+)=o(1),$$
and this implies $\P(\{\C_\SS \in \WW_\SS^+\}\cap \typ^{\SS})=1-o(1)$.

\end{proof}

\section{Mixing times in Regime $k\lesssim \log|G|$}\label{sec:lower_regime}
\subsection{Lower bound on mixing time}

In this section, we prove the lower bounds on mixing times for the regimes: 
\begin{enumerate}[label=(\roman*)]
\item $k\gg1$ with $k^2\ll |G|^{2/k}$;
\item $k\gg |G|^{2/k}(\log k)^2$ with $k\lesssim \log|G|$. 
\end{enumerate}
We collect the definitions used before in the following. Recall that $\rho_\SS = k_\SS/k$ and $\rho_\RR = k_\RR/k$. In Regime~(i), we define $\WW^-_\SS(\delta,t)$ as in \eqref{WS_smallregime}, while in Regime~(ii) we use the definition given in \eqref{WSminus_middle_regime}. Let $\hat w$ denote the first $k_\SS-1$ coordinates of $w\in \ZZ^{k_\SS}$.

\begin{definition}\label{entropic_set_lb}
Let $t_0=t_0(k,G)$ as in~\eqref{cutoff_time}, and take parameters $1 \ll \omega \ll k$, $1 \ll \tau \ll \tfrac{k}{t_0^{1/2}\log k}$, and $\delta>0$.

\medskip
\noindent  \textbf{\emph{Regime (i):}} Define
\begin{align*}
\WW^-_\SS(\delta, t) 
&:= \Bigl\{ w \in \ZZ^{k_\SS} : \sum_{i=1}^{k_\SS} w_i \in\{ 0,1\},\; 
\hat w \in \WW^{\mathrm{Normal}}_{\rho_\SS t,\delta} \cap \ZZ^{k_\SS-1} \Bigr\},\\
 \WW^-_\RR(t) &:= \{ w \in \ZZ^{k_\RR} : \P(\C_\RR(t) = w) \ge e^{\omega} |G|^{-k_\RR/(k-1)} \}.
\end{align*}

\medskip
\noindent \textbf{\emph{Regime (ii):}} Let $n_- = \lceil \rho_\SS t - \tau t^{1/2} \rceil$ and $n_+ = \lfloor \rho_\SS t + \tau t^{1/2} \rfloor$, and define
\begin{align*}
 \WW^-_\SS(\delta, t) 
&:= \bigcup_{\ell \in [n_-, n_+]} \Bigl\{ w \in \ZZ^{k_\SS} : \P(Y_\ell = w) \ge e^{- k_\SS h(t/k) - \delta k} \Bigr\},\\
 \WW^-_\RR(t) 
&:= \{ w \in \ZZ^{k_\RR} : \P(\C_\RR(t) = w) \ge e^{\omega} |G|^{-\rho_\RR} \}.
\end{align*}
\end{definition}

\begin{prop}\label{lb_mixing}
Let $k$ fall into one of the following regimes: 
(i) $k \gg 1$ with $k^2 \ll |G|^{2/k}$, or 
(ii) $k \gg |G|^{2/k} (\log k)^2$ with $k \lesssim \log |G|$. For a given generating set $\mathcal{S} = \{ z_a^{\pm 1} : a \in [k] \} \subseteq G$, any $\varepsilon > 0$, 
and $t \leq (1-\varepsilon) t_0(k,G)$, the rate-1 continuous time random walk $X=(X(t))_{t\geq 0}$ on $\Cay(G,\mathcal{S})$ satisfies
\[
\| \P_{\mathcal{S}}(X(t)= \cdot) - \pi \|_{\mathrm{TV}} = 1 - o(1),
\]
where $\P_{\mathcal{S}}$ denotes the law of $X$ started at $X(0) = \id$ driven by ${\mathcal{S}}$.

\end{prop}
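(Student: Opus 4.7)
The strategy is to construct, for each fixed realization of the generating set $\mathcal{S}$, a (random) set $A_t \subseteq G$ such that $X(t) \in A_t$ with probability $1-o(1)$ while $\pi(A_t) = o(1)$; the bound
\[
\|\P_{\mathcal{S}}(X(t)=\cdot) - \pi\|_{\mathrm{TV}} \;\geq\; \P_{\mathcal{S}}(X(t) \in A_t) - \pi(A_t)
\]
then yields the conclusion. Proposition~\ref{simplified_X} expresses $X(t) = s^{N_\SS(t) \bmod 2} r^{\sum_a C_a(t) U_a}$, and a direct computation from Definition~\ref{C_a} shows that $\sum_{a \in [k_\SS]} C_a(t) = N_\SS(t) \bmod 2$, so that once $\mathcal{S}$ is fixed, $X(t)$ is a deterministic function of $\C(t) = (\C_\SS(t), \C_\RR(t))$. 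The problem therefore reduces to confining $\C(t)$ to a suitable small set in $\ZZ^k$.

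The natural candidate is
\[
A_t \;:=\; \bigl\{ s^{\sum_a w_{\SS,a}} \, r^{\sum_a w_a U_a \bmod n} \,:\, w \in \WW^-_\SS(\delta,t) \times \WW^-_\RR(t) \bigr\},
\]
with $\delta>0$ chosen sufficiently small relative to $\varepsilon$. For the high-probability property, a union bound reduces matters to showing $\P(\C_\SS \in \WW^-_\SS) = 1-o(1)$ and $\P(\C_\RR \in \WW^-_\RR) = 1-o(1)$. The first is Proposition~\ref{CS_entropy_concentration} in Regime~(i) and Proposition~\ref{CS_entropy_concentration_middleregime}(i) in Regime~(ii). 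The second follows from Proposition~\ref{CR_concentration}(i), after checking that the threshold in the definition of $\WW^-_\RR(t)$ matches $\E[Q_\RR(t_0)] - \omega$: in Regime~(i), Lemma~\ref{entropy_supplementary} applied to $t_0 = \frac{k}{2\pi e}|G|^{2/(k-1)}$ gives $h(t_0/k) = \frac{\log|G|}{k-1} + O((t_0/k)^{-1/4})$, so $\E[Q_\RR(t_0)] = \frac{k_\RR}{k-1}\log|G| + o(k)$; in Regime~(ii), the definition $t_0 = t_{\mathrm{ent}}(k,|G|)$ gives $h(t_0/k) = \log|G|/k$ exactly, whence $\E[Q_\RR(t_0)] = \rho_\RR\log|G|$.

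For the volume property, $|A_t| \leq |\WW^-_\SS(\delta,t)| \cdot |\WW^-_\RR(t)|$. Summing $\P(\C_\RR = w) \geq e^\omega / M$ over $w \in \WW^-_\RR$ yields $|\WW^-_\RR(t)| \leq e^{-\omega} M$, with $M = |G|^{k_\RR/(k-1)}$ in Regime~(i) and $M = |G|^{\rho_\RR}$ in Regime~(ii). Combining with $|\WW^-_\SS(\delta,t)| \leq |G|^{(k_\SS-1)/(k-1)}$ from Lemma~\ref{WS_size} in Regime~(i) and $|\WW^-_\SS(\delta,t)| \leq |G|^{\rho_\SS}$ from Lemma~\ref{WS_size_middle_regime} in Regime~(ii), the exponents sum to exactly $1$ in each case, so that $|A_t| \leq e^{-\omega}|G|$ and $\pi(A_t) = |A_t|/|G| = o(1)$. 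Putting the two properties together gives $\|\P_\mathcal{S}(X(t)=\cdot) - \pi\|_{\mathrm{TV}} \geq 1 - o(1)$.

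No fundamentally new obstacle arises at this stage, since all the substantive work—the entropic concentration of $\C_\SS$ and the matching volume bound on $\WW^-_\SS$—has already been carried out in Section~\ref{sec:entropy_auxiliary_process}. The main care needed is bookkeeping: the constant $\omega$ in the definition of $\WW^-_\RR$ must be taken so that the concentration statement of Proposition~\ref{CR_concentration}(i) absorbs the $o(k)$ discrepancy between $\E[Q_\RR(t_0)]$ and $\tfrac{k_\RR}{k-1}\log|G|$ in Regime~(i), and one should observe that the exact telescoping $\tfrac{k_\SS-1}{k-1} + \tfrac{k_\RR}{k-1} = 1$ is precisely the algebraic identity that forces the cutoff exponent $2/(k-1)$ rather than $2/k$, as discussed after Theorem~\ref{cutoff_iid}.
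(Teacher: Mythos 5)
Your proposal is correct and follows essentially the same approach as the paper: confine the auxiliary process $\C(t)$ to the small entropic set $\WW^-_\SS\times\WW^-_\RR$ using Propositions~\ref{CS_entropy_concentration}/\ref{CS_entropy_concentration_middleregime} and~\ref{CR_concentration}, then bound the image set in $G$ via Lemmas~\ref{WS_size}/\ref{WS_size_middle_regime} and the trivial estimate on $|\WW^-_\RR|$, with the exponents telescoping to $1$. Your observation that $\sum_{a\in[k_\SS]}C_a(t)=N_\SS(t)\bmod 2$ lets you pin down the power of $s$ and avoid the factor of $2$ that the paper absorbs harmlessly into $|E|\le 2e^{-\omega}|G|$; one small imprecision is that in Regime~(ii) the identity $h(t_0/k)=\log|G|/k$ holds exactly only when $k\asymp\log|G|$ (where $t_0=t_{\mathrm{ent}}$), whereas the sub-case $k\ll\log|G|$ uses $t_0=\tfrac{k}{2\pi e}|G|^{2/(k-1)}$ and so $h(t_0/k)=\tfrac{1}{k-1}\log|G|+O((t_0/k)^{-1/4})$ -- but as you note the resulting $o(k)$ discrepancy is absorbed by the free parameter $\omega\ll k$.
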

\begin{proof}
Let $\varepsilon>0$ be fixed and set $t = (1-\varepsilon)\,t_0(k,G)$. 
Since the total variation distance is non-increasing, it suffices to prove the result for this choice of $t$. 
For simplicity of notation, we omit the indices and write $\P = \P_{\mathcal{S}}$ and $X = X(t)$.
Let $k_\SS$ be the number of generators in $\{z_a: a\in[k]\}$ that are reflections and we can write
$$z_a=
\begin{cases}
sr^{u_a} & \text{ for }1\leq a\leq k_\SS\\
r^{u_a}& \text{ for } k_\SS<a\leq k
\end{cases}
$$
for $(u_a)_{a\in [k]}\subseteq [n]$. 

Let $\delta>0$ be sufficiently small relative to $\ep$, and let $\WW^-_\RR=\WW^-_\RR(t),\WW^-_\SS=\WW^-_\SS(\delta,t)$ be defined as in Definition \ref{entropic_set_lb}.
Consider the set 
\begin{align*}
E=\{ x\in G: \exists w\in \WW_\RR^-\times \WW^-_\SS \text{ such that }x=r^{\sum_{a\in [k]} w_au_a} \text{ or } sr^{\sum_{a\in [k]} w_au_a}\}.
\end{align*}
It then follows from Propositions \ref{CS_entropy_concentration} and \ref{CS_entropy_concentration_middleregime} that, when $t = (1-\ep) t_0(k,G)$,
\beq\label{lower_bound_err}
\P(X \in E ) \geq \P(\C \in \WW_\RR^- \times \WW_\SS^- )=\P(\C_\RR\in \WW_\RR^-)\P(\C_\SS\in \WW^-_\SS) = 1 - o(1).
\eeq

In Regime (i),  the definition of $\WW^-_\RR$ implies 
$$1\geq \sum_{w\in \WW^-_\RR}\P(\C_\RR=w)\geq e^{\omega}|G|^{-k_\RR/(k-1)}|\WW^-_\RR|$$
and thus $|\WW^-_\RR|\leq e^{-\omega}|G|^{k_\RR/(k-1)}$, while Lemma \ref{WS_size} shows that $|\WW^-_\SS|\leq |G|^{(k_\SS-1)/(k-1)}$. In Regime (ii), it follows from the same reasoning that $|\WW^-_\RR|\leq e^{-\omega}|G|^{\rho_\RR}$ and Lemma \ref{WS_size_middle_regime} implies $|\WW^-_\SS|\leq |G|^{\rho_\SS}$. Therefore, in both regimes, we have $| \WW_\RR^-\times \WW^-_\SS|\leq e^{-\omega}|G|$. 

Due to the correspondence between $ \WW_\RR^-\times \WW^-_\SS$ and $E$, 
$$|E|\leq 2| \WW_\RR^-\times \WW^-_\SS|\leq  2e^{-\omega}|G|.$$
Combined with \eqref{lower_bound_err}, we have shown
$$\| \P(X(t)=\cdot) - \pi \|_{\mathrm{TV}} \geq \P(X\in E)-\pi(E)\geq 1-o(1)-2e^{-\omega}=1-o(1).$$

\end{proof}

\subsection{Upper bound on mixing time}
Recall the two regimes: (i) $k \gg 1$ with $k^2 \ll |G|^{2/k}$, and (ii) $k \gg |G|^{2/k} (\log k)^2$ with $k \lesssim \log|G|$. Throughout this section, we will refer to them as Regime~(i) and Regime~(ii).

We again collect the definitions of the special events and sets introduced in the previous discussion. Recall the definition of $M_\SS$ from Lemma \ref{coupling}. Let $\hat w$ denote the first $k_\SS-1$ coordinates of $w\in \ZZ^{k_\SS}$.
\begin{definition}\label{typical_event}

Let $t_0=t_0(k,G)$ as in~\eqref{cutoff_time}, and take parameters $1 \ll \omega \ll k$, $1 \ll \tau \ll \tfrac{k}{t_0^{1/2}\log k}$, and $\delta>0$. 

\medskip
\noindent  \textbf{\emph{Regime (i):}} Define
\begin{align*}
\WW^+_\SS(\delta,t)&:=\{ w\in \ZZ^{k_\SS}: \sum_{i=1}^{k_\SS}w_i\in \{0,1\}, \hat w\in \WW^{\mathrm{Normal}}_{\rho_\SS t,\delta}\cap \ZZ^{k_\SS-1}\},\\
\WW^+_\RR(t)&:=\{ w\in \ZZ^{k_\RR}: \P(\C_\RR(t)=w)\leq e^{-\omega}|G|^{-k_\RR/(k-1)}\},\\
\typ^{\SS}&:=\{ M_\SS=  \C^+_\SS- \C^-_\SS\}\cap \{N_\SS\in  [\rho_\SS t-t^{7/12}, \rho_\SS t+t^{7/12}]\}. 
\end{align*}

\medskip
\noindent \textbf{\emph{Regime (ii):}} Let $n_- = \lceil \rho_\SS t - \tau t^{1/2} \rceil$ and $n_+ = \lfloor \rho_\SS t + \tau t^{1/2} \rfloor$, and define
\begin{align*}
\WW_\SS^+(\delta,t)&:= \left\{ w\in \mathbb{\ZZ}^{k_\SS}:  \P(Y_{n_+}=w)\leq e^{-k_\SS h(t/k)+\delta k}\right\} \\
 &\quad\quad\quad\quad \quad \cup  \left\{ w\in \mathbb{\ZZ}^{k_\SS}:  \P(Y_{n_+-1}=w)\leq e^{-k_\SS h(t/k)+\delta k}\right\}\\
\WW^+_\RR(t)&:=\{ w\in \ZZ^{k_\RR}: \P(\C_\RR(t)=w)\leq e^{-\omega}|G|^{-\rho_\RR}\}\\
 \typ^{\SS}&:=\{N_\SS\in  [\rho_\SS t-\tau t^{1/2}, \rho_\SS t+\tau t^{1/2}]\}.
 \end{align*}

\noindent
In both Regime (i) and (ii), let $\WW^+(\delta,t):=\WW_\RR^+(t)\times \WW_\SS^+(\delta,t)$. 
\end{definition}

\medskip
We now follow the entropic methodology described in Section \ref{entropic_framework} to prove the upper bound on the mixing time. Let $X'$ be an independent copy of $X$. Similarly, let $\C'=(\C'_\RR,\C'_\SS)$ denote the auxiliary process of $X'$, which is an independent copy of the auxiliary process $\C$ of $X$. For ease of notation, we omit the time index $t$ below. 
Let $\typ^{\SS}(X), \typ^\SS(X')$ be defined as in Definition \ref{typical_event}, for the evolution of $X$ and $X'$, respectively. 

Define
\beq\label{typical_event_X}
\typ:= \{\typ^{\SS}(X), \C\in \WW^+\}\cap \{\typ^\SS(X'),\C'\in\WW^+\}
\eeq
It follows from the same arguments as Lemma \ref{TV_entropic} that 
\beq\label{tv_conditional_ub}
\| \P_\mathcal{S}(X= \cdot) - \pi \|_{\mathrm{TV}} \leq \| \P_\mathcal{S}(X= \cdot \mid \C\in\WW^+,\typ^\SS(X) ) - \pi \|_{\mathrm{TV}}+\P(\{\C\in \WW^+,\typ^\SS(X)\}^c)
\eeq
and 
$$4\| \P_\mathcal{S}(X = \cdot \mid  \C\in\WW^+,\typ^\SS(X)) - \pi \|_{\mathrm{TV}}^2\leq |G|\cdot \P_\mathcal{S}(X=X'|\typ)-1.$$
Note that when $\mathcal{S}$ is a random set of generators, $\| \P_\mathcal{S}(X = \cdot) - \pi \|_{\mathrm{TV}}$ is a random variable that is measurable with respect to the $\sigma$-field generated by the choice of $\mathcal{S}$. In what comes later in our arguments, we will take the expectation over the choices of $\mathcal{S}$ and work with
$$\P(X=X'|\typ):=\E[\P_\mathcal{S}(X=X'|\typ)].$$

As suggested by Lemma \ref{TV_entropic}, in order to control the total variation distance to stationarity we will upper bound  
\begin{align}\label{tv_decomp}
\nonumber D(t)&=|G|\cdot \P(X=X'|\typ)-1\\
&\leq |G|\cdot \P(X=X',\C\neq \C' |\typ)+|G|\cdot \P(\C=\C' | \typ)-1.
\end{align}
where we also average over the choice of $\mathcal{S}$. 

In order to prove the upper bound on the mixing time, we first control the probability of $\typ$, as established in Lemma~\ref{typical_event_prob}. We then address the two terms appearing in~\eqref{tv_decomp} through Lemmas~\ref{nonequalC} and~\ref{equalC}, respectively.

\begin{lemma}\label{typical_event_prob}
Let $k$ belong to Regime~(i) or~(ii), fix $\ep>0$, and set $t = (1+\ep) t_0(k,G)$. 
Choose $\delta>0$ sufficiently small relative to $\ep$ and define 
$\WW^+ = \WW^+(\delta, t)$ as in Definition~\ref{typical_event}. Then
\[
\P(\C(t) \in \WW^+, \typ^\SS) = 1 - o(1),
\]
and consequently $\P(\typ) = 1 - o(1)$, where $\typ$ is defined in~\eqref{typical_event_X}.
\end{lemma}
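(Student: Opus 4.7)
The plan is to decompose the event via independence and apply the previously established entropic concentration statements. By Proposition~\ref{auxiliary_law}(i), $\C_\RR$ is independent of $\C_\SS$ and $\FF_\SS$. In Regime~(i), $\typ^\SS$ also involves $M_\SS$, but $M_\SS$ is built from $N_\SS$ and independent auxiliary Gaussians that are external to the walk; in Regime~(ii), $\typ^\SS$ only depends on $N_\SS$. Either way, $\{\C_\SS\in\WW^+_\SS\}\cap\typ^\SS$ is measurable with respect to a $\sigma$-field independent of $\C_\RR$. Hence
\[
\P(\C\in\WW^+,\,\typ^\SS) \;=\; \P(\C_\RR\in\WW^+_\RR)\cdot\P(\C_\SS\in\WW^+_\SS,\,\typ^\SS),
\]
and it suffices to show each factor is $1-o(1)$.

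For the $\C_\RR$ factor, I would rewrite $\{\C_\RR\in\WW^+_\RR\}$ as a lower-tail event on the random entropy $Q_\RR(t) = -\log\P(\C_\RR(t)=\cdot)|_{\cdot=\C_\RR(t)}$: the defining threshold is $\omega + L$ with $L = \frac{k_\RR}{k-1}\log|G|$ in Regime~(i) and $L = \rho_\RR\log|G|$ in Regime~(ii). Using Lemma~\ref{entropy_supplementary} together with the definition of $t_0(k,G)$ in \eqref{cutoff_time}, one checks that $\E[Q_\RR(t_0)] = k_\RR\,h(t_0/k)$ satisfies $\E[Q_\RR(t_0)] \ge L - o(k)$: in Regime~(i), the asymptotic $h(t_0/k) = \frac{1}{k-1}\log|G| + O((t_0/k)^{-1/4})$ with $t_0/k\asymp |G|^{2/(k-1)}\to\infty$ gives an error $o(k)$; in Regime~(ii) with $k\asymp\log|G|$ the identity $h(t_0/k)=\log|G|/k$ holds exactly, while in the subregime $k\ll\log|G|$ the leading term $\frac{k_\RR}{k-1}\log|G|$ actually exceeds $L=\rho_\RR\log|G|$. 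Then Proposition~\ref{CR_concentration}(ii), applied with $\omega$ chosen to dominate this gap (the flexibility $1\ll\omega\ll k$ makes this possible), yields $\P(\C_\RR\in\WW^+_\RR) = 1-o(1)$.

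For the $\C_\SS$ factor combined with $\typ^\SS$: in Regime~(i), Proposition~\ref{CS_entropy_concentration} gives $\P(\C_\SS(t)\in\WW^+_\SS(\delta,t)) = 1-o(1)$, and a union bound with $\P(\typ^\SS)=1-o(1)$ from \eqref{typical_event_S_prob} finishes the job; in Regime~(ii), the joint statement $\P(\{\C_\SS\in\WW^+_\SS\}\cap\typ^\SS) = 1-o(1)$ is exactly the content of Proposition~\ref{CS_entropy_concentration_middleregime}(ii). Combining both factors gives the first claim. The consequence $\P(\typ)=1-o(1)$ is immediate from \eqref{typical_event_X}, since $(X,\C,\typ^\SS(X))$ and $(X',\C',\typ^\SS(X'))$ are independent copies and so $\P(\typ) = \P(\C\in\WW^+,\typ^\SS)^2 = (1-o(1))^2$.

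The only nontrivial technical point is the threshold matching in the $\C_\RR$ step: verifying that the exponent $L$ in the definition of $\WW^+_\RR$ lies on the correct side of $\E[Q_\RR(t_0)]$ up to an error that is $o(k)$ and can be absorbed into $\omega$. This reduces to the sharp asymptotics of $h(\cdot)$ in Lemma~\ref{entropy_supplementary} and the explicit form of $t_0(k,G)$ in each of the relevant sub-regimes, and requires no new ideas beyond a careful bookkeeping of constants.
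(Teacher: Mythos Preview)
Your proposal is correct and follows essentially the same approach as the paper: factorize via independence, apply Proposition~\ref{CR_concentration} for the $\C_\RR$ factor, and Propositions~\ref{CS_entropy_concentration} and~\ref{CS_entropy_concentration_middleregime} for the $\C_\SS$ factor. Your discussion of the threshold matching for $\WW^+_\RR$ (verifying that $L$ agrees with $\E[Q_\RR(t_0)]$ up to an $o(k)$ error absorbable into $\omega$) is a detail the paper leaves implicit but which is indeed needed for the application of Proposition~\ref{CR_concentration}(ii) to go through.
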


\begin{proof}
By the independence of $\C_\RR$ from $\C_\SS$ and $\FF_\SS$, as stated in Proposition~\ref{auxiliary_law}, we have 
\[
\P(\C \in \WW^+, \typ^\SS) 
  = \P(\C_\RR \in \WW^+_\RR)\, \P(\{\C_\SS \in \WW^+_\SS\}\cap \typ^\SS). 
\]
Lemma~\ref{CR_concentration} implies that $\P(\C_\RR \in \WW^+_\RR) = 1 - o(1)$, 
while Propositions~\ref{CS_entropy_concentration} and~\ref{CS_entropy_concentration_middleregime} 
together yield 
\[
\P\bigl(\{\C_\SS \in \WW^+_\SS\} \cap \typ^\SS \bigr) = 1 - o(1)
\]
for $k$ in Regime (i) or (ii).
\end{proof}

Linear combinations of independent uniform random variables in an abelian group are themselves uniform on their support. As preparation for the proof we present the following lemma, which is a restatement of known results in   \cite{hermon2021cutoff}.
\begin{lemma}[Lemma 2.11  \cite{hermon2021cutoff}]\label{gcd_unif}
Let $k\in\mathbb{N}$. Let $H$ be an Abelian group and $U_1,\dots,U_k\overset{iid}{\sim} \Unif(H)$. For $v=(v_1,\dots, v_k)\in \ZZ^k$, write $U=(U_1,\dots,U_k)$ and define $v\cdot U:=\sum_{i=1}^k v_iU_i$. We have
$$v \cdot U\sim \Unif (\gamma H) \quad \text{ where } \gamma=\gcd(v_1,\dots,v_k,|H|).$$

\end{lemma}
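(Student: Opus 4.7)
The plan is to realize $v\cdot U$ as the image of the uniform distribution on $H^k$ under an explicit surjective group homomorphism $\Phi:H^k \to H$ whose image turns out to be exactly $\gamma H$. Once this is done, the conclusion follows from the elementary fact that a surjective homomorphism between finite Abelian groups transports the uniform distribution on the domain to the uniform distribution on the image, since every fibre of such a map has the same cardinality $|\ker\Phi|$.

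First I would define $\Phi(h_1,\dots,h_k) := \sum_{i=1}^{k} v_i h_i$ and observe that $\Phi$ is a group homomorphism because $H$ is Abelian. Since $(U_1,\dots,U_k)\sim\Unif(H^k)$, this gives $v\cdot U = \Phi(U_1,\dots,U_k) \sim \Unif(\Phi(H^k))$ by the fibre-size observation above, so everything reduces to identifying $\Phi(H^k)$ with $\gamma H$.

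Next I would establish the identity $\Phi(H^k) = \gamma H$ by two inclusions. For $\Phi(H^k) \subseteq \gamma H$, note that $\gamma \mid v_i$ for every $i$, so $v_i h_i \in \gamma H$ for every $h_i \in H$, hence $\Phi(h_1,\dots,h_k) \in \gamma H$. For the reverse inclusion I would invoke Bezout's identity in $\mathbb{Z}$ to produce integers $a_1,\dots,a_k, b$ with $\gamma = \sum_{i=1}^{k} a_i v_i + b\,|H|$; then for arbitrary $h \in H$, Lagrange's theorem gives $|H|\cdot h = 0$ in $H$, so
\[
\gamma h \;=\; \sum_{i=1}^{k} v_i (a_i h) \;=\; \Phi(a_1 h,\dots,a_k h) \;\in\; \Phi(H^k),
\]
which yields $\gamma H \subseteq \Phi(H^k)$. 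Combining the two inclusions completes the proof.

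No real obstacle is anticipated; the only subtlety worth flagging is why one may take the gcd with $|H|$ rather than with, say, the exponent of $H$, but this is precisely what Bezout together with $|H|\cdot h = 0$ makes transparent in the argument above.
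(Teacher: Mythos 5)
Your proof is correct. The paper states this lemma as a citation to Hermon and Olesker-Taylor (Lemma 2.11 of \cite{hermon2021cutoff}) without reproducing a proof, so there is no in-paper argument to compare against; your argument is a clean, self-contained derivation via the homomorphism $\Phi(h_1,\dots,h_k)=\sum_i v_i h_i$, the equal-fibre observation, and the two inclusions $\Phi(H^k)\subseteq\gamma H$ (from $\gamma\mid v_i$) and $\gamma H\subseteq\Phi(H^k)$ (from Bezout plus $|H|\cdot h=0$), and it matches the standard approach one would expect for this statement.
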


\begin{lemma}\label{nonequalC}
Let $k$ belong to either Regime~(i) or~(ii). For any $\ep>0$ and $t\geq  (1+\ep)t_0(k,G)$, we have 
\begin{align*}
|G|\cdot \P(X=X',\C\neq \C'| \typ)&=1+o(1).
\end{align*}
\end{lemma}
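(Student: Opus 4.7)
The plan is to reduce the computation to an application of Lemma~\ref{gcd_unif}. By Proposition~\ref{simplified_X}, $X(t) = s^{N_\SS \bmod 2}\, r^{\sum_{a} C_a U_a}$, and likewise for $X'$ with $N'_\SS$ and $(C'_a)$; thus $\{X = X'\}$ is the intersection of the $\C$-measurable event $A_{\mathrm{par}} := \{N_\SS \equiv N'_\SS \pmod 2\}$ and the $U$-dependent event $\{\sum_a v_a U_a \equiv 0 \pmod n\}$, where $v := \C - \C'$. The typical set $\typ$ is measurable with respect to $\HH$ of Definition~\ref{filtration} together with the auxiliary normal couplings of Lemma~\ref{coupling}, none of which reveals the $(U_a)$'s; so conditionally on $(\C, \C', \typ)$ the $U_a$'s remain i.i.d.\ uniform on $\ZZ_n$. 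Applying Lemma~\ref{gcd_unif} with $H = \ZZ_n$ yields, for any $v \neq 0$,
$$
\P\bigl(\textstyle\sum_{a} v_a U_a \equiv 0 \pmod n \bigm| \C, \C', \typ\bigr)
\;=\; \frac{\gcd(v, n)}{n}
\;\in\; \bigl\{1/n,\;1\bigr\},
$$
where the larger value occurs precisely when $n \mid v_a$ for every $a$.

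Next I would show that the case $\gcd(v, n) = n$ contributes negligibly. Combined with $v \neq 0$, this case forces $|v_a| \geq n$ for at least one $a$, so it suffices to control $\P(\max_a |v_a| \geq n \mid \typ)$. From Proposition~\ref{auxiliary_law}, each coordinate of $\C(t)$ is mean-zero and sub-Gaussian with variance proxy of order $t/k$: the rotation coordinates $(C_a)_{a > k_\SS}$ are independent rate-$1/k$ simple random walks on $\ZZ$, while the reflection coordinates $(C_a)_{a \in [k_\SS]}$ are, marginally, differences of independent $\mathrm{Binomial}(\lfloor N_\SS/2 \rfloor,\, 1/k_\SS)$ variables. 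A standard sub-Gaussian tail bound then gives
$$
\P\bigl(|C_a(t)| \geq n\bigr) \;\leq\; 2\exp\!\bigl(-c\, n^2 k / t\bigr)
$$
for some absolute $c > 0$. In Regime (i), $t \leq 2\, t_0(k,G) \lesssim k\, |G|^{2/(k-1)}$, so $n^2 k / t \gtrsim n^2/|G|^{2/(k-1)} \gg \log|G|$ for $k \geq 3$; in Regime (ii) the same bound holds (and is stronger when $k \asymp \log |G|$, where $t/k = O(1)$). A union bound over the $2k$ coordinates of $(\C, \C')$ then yields $\P(\max_a |v_a| \geq n \mid \typ) = o(1/n)$.

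It remains to evaluate $\P(A_{\mathrm{par}}, \C \neq \C' \mid \typ)$. Since $N_\SS$ and $N'_\SS$ are independent $\mathrm{Poisson}(\rho_\SS t)$ variables with diverging means, each parity is asymptotically fair, so $\P(A_{\mathrm{par}}) = \tfrac12 + o(1)$; conditioning on $\typ$ only restricts $(N_\SS, N'_\SS)$ to a wide window around the mean, which perturbs parities by $o(1)$. Combined with $\P(\C = \C' \mid \typ) = o(1)$, which follows from the entropy concentration in Propositions~\ref{CS_entropy_concentration} and~\ref{CS_entropy_concentration_middleregime} (and is a weak byproduct of the companion Lemma~\ref{equalC}), I obtain
$$
\P(X = X',\, \C \neq \C' \mid \typ)
\;=\; \tfrac{1}{n}\, \P(A_{\mathrm{par}},\, \C \neq \C' \mid \typ) + o(1/n)
\;=\; \frac{1 + o(1)}{2n},
$$
so multiplication by $|G| = 2n$ gives the claim. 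The main obstacle is the regime-dependent tail bound in the previous paragraph: one must verify that the natural scale $\sqrt{t/k}$ of each coordinate of $\C$ sits below $n$ with enough slack to survive a union bound over $2k$ coordinates, and this is precisely where the upper bound $t \leq 2\, t_0(k,G)$ and the regime constraints on $k$ enter in an essential way.
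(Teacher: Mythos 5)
Your proof shares the same skeleton as the paper's: reduce $\{X=X'\}$ via Proposition~\ref{simplified_X} to the parity event $A_{\mathrm{par}}$ and the uniform-mixing of $V\cdot U$ via Lemma~\ref{gcd_unif}. There are two meaningful points of comparison.

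First, you explicitly treat the case $\gcd(v,n)=n$, which the paper silently assumes away when it asserts ``as $n$ is prime, $V\cdot U\sim\Unif(\ZZ_n)$ whenever $V\neq 0$.'' That assertion is not literally correct --- $V\neq 0$ permits a nonzero coordinate that happens to be a multiple of $n$, in which case $\gcd(V,n)=n$ and $V\cdot U\equiv 0$. Your tail bound on $\max_a|v_a|$ (sub-Gaussian/super-exponential for the rate-$1/k$ coordinates, with $t\lesssim k|G|^{2/(k-1)}\ll kn$) together with $\log k\ll\log|G|$ closes this gap cleanly. This is a genuine refinement of the paper's argument.

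Second, the paper proves only the upper bound $|G|\cdot\P(X=X',\C\neq\C'\mid\typ)\le 1+o(1)$, which is all that is needed in \eqref{tv_decomp}; it does so by the crude inequality $\P(\cdot\mid\typ)\le\P(\cdot)/\P(\typ)$ and by computing $\P(A_{\mathrm{par}})=\tfrac12(1+e^{-4\rho_\SS t})$ \emph{unconditionally} as a Poisson parity. You instead aim to prove the equality as stated, which forces you to control $\P(A_{\mathrm{par}}\mid\typ)$ directly. Here there is a real gap: you assert that conditioning on $\typ$ ``perturbs parities by $o(1)$,'' but $\typ$ is not merely a window on $(N_\SS,N'_\SS)$ --- it also involves the normal-coupling event and $\C\in\WW^+$, and you would need to argue that these are (asymptotically) parity-symmetric. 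Since $\P(A_{\mathrm{par}}\mid\typ) = p^2+(1-p)^2\ge\tfrac12$ with $p=\P(N_\SS\text{ even}\mid\typ_X)$, any bias in $p$ pushes the left side \emph{above} $\tfrac12$ and would break both the claimed equality and the required upper bound if pursued this way. The paper's route avoids this entirely, at the cost of only giving $\le$. If you keep your strategy you must justify $p=\tfrac12+o(1)$ under the full conditioning; alternatively, switch to the paper's crude-bound step, which suffices for the downstream application and is simpler.
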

\begin{proof}
Write $V:=\C-\C'$ and let $U=(U_1,\dots,U_k)$ where $U_a$ is defined as in \eqref{gen_rep}, which are i.i.d. uniform elements of $\ZZ_n$. Recall from Definition~\ref{filtration} the filtration $\HH$, which encodes the sequence information without specifying the identities of $(U_a)_{a\in [k]}$. Under $\HH$, $\C$ and $\C'$ are measurable, and hence $V$ is known.

Recall from Definition \ref{filtration} the filtration $\HH$, which essentially records the sequence information without the identities of $(U_a)_{a\in [k]}$, under which $\C,\C'$ are measurable and thus $V$ is known. 

When $V\neq 0$, as $n$ is a prime, Lemma \ref{gcd_unif} implies that $V\cdot U\sim \Unif (\ZZ_n)$. It then follows from Proposition \ref{simplified_X} that
$$
\{X=X'\}=\{s^{(N_\SS-N'_\SS \mod 2)} r^{ \sum_{a\in [k]} (C_a-C'_a) U_a}=\id\},
$$
and hence
\begin{align*}
\P(X=X', V\neq 0|\HH)&=\1\{V\neq 0\}\P(s^{(N_\SS-N'_\SS) \mod 2} r^{V\cdot U}=\id|\HH)\\
&=\1\{V\neq 0, (N_\SS-N'_\SS) \mod 2=0\}\P(V\cdot U=0|V)\\
&=\1\{V\neq 0, (N_\SS-N'_\SS) \mod 2=0\}\cdot \frac{1}{n}.
\end{align*}
Note that $N_\SS,N'_\SS$ are independent Poisson random variables with mean $\rho_\SS t$. Hence,
\begin{align*}
\P(X=X', \C\neq \C')&=\E[\P(X=X', V\neq 0|\HH)]\\
&\leq \frac{1}{n} \P((N_\SS-N'_\SS) \mod 2=0)=\frac{1}{n}\P((N_\SS+N'_\SS) \mod 2=0)\\
&=\frac{1}{n}\P( \mathrm{Poisson}(2\rho_\SS t) \mod 2=0)=\frac{1}{2n}(1+e^{-4\rho_\SS t}),
\end{align*}
where the last equality is a standard result that the probability for a $\mathrm{Poisson}(\lambda)$ random variable to be even is $\frac{1}{2}(1+e^{-2\lambda})$. 
Therefore, as $\P(\typ)=1-o(1)$ by Lemma \ref{typical_event_prob},
$$\P(X=X',\C\neq \C'|\typ)\leq \frac{ \P(X=X',\C\neq \C')}{\P(\typ)}\leq \frac{ |G|^{-1} (1+e^{-4\rho_\SS t})}{\P(\typ)}\leq |G|^{-1}(1+o(1)).$$
\end{proof}

 \begin{lemma}\label{equalC}
Let $k$ belong to either Regime~(i) or~(ii).   For any $\ep > 0$ and $t = (1+\ep)t_0(k,G)$,
\[
\P(\C = \C'| \typ) = o(|G|^{-1}).
\]
\end{lemma}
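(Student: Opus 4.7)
The plan is to exploit the independence structure of $\C = (\C_\RR, \C_\SS)$ established in Proposition~\ref{auxiliary_law} and bound the collision probability on the rotation and reflection coordinates separately. Since $\C_\RR$ is independent of $(\C_\SS, \typ^\SS)$, and $(\C, \typ^\SS(X))$ is independent of its primed counterpart, I would first factor
\begin{equation*}
\P(\C = \C', \typ) \;\le\; \P\bigl(\C_\RR = \C'_\RR,\ \C_\RR, \C'_\RR \in \WW^+_\RR\bigr) \cdot \P\bigl(\C_\SS = \C'_\SS,\ \C_\SS, \C'_\SS \in \WW^+_\SS,\ \typ^\SS(X), \typ^\SS(X')\bigr),
\end{equation*}
and then bound each factor separately. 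The conclusion will follow from dividing by $\P(\typ) = 1 - o(1)$, provided by Lemma~\ref{typical_event_prob}.

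For the rotation factor, expanding over values and invoking the pointwise bound built into $\WW^+_\RR$ in Definition~\ref{typical_event} gives
\begin{equation*}
\sum_{w \in \WW^+_\RR} \P(\C_\RR = w)\,\P(\C'_\RR = w) \;\le\; \max_{w \in \WW^+_\RR} \P(\C_\RR = w) \;\le\; e^{-\omega} |G|^{-\alpha_\RR},
\end{equation*}
where $\alpha_\RR = k_\RR/(k-1)$ in Regime~(i) and $\alpha_\RR = \rho_\RR$ in Regime~(ii). For the reflection factor, an analogous decomposition combined with Lemma~\ref{set_comparison} (Regime~(i)) or Lemma~\ref{set_comparison_Y} (Regime~(ii)) yields
\begin{equation*}
\sum_{w \in \WW^+_\SS} \P(\C_\SS = w, \typ^\SS(X))\,\P(\C'_\SS = w, \typ^\SS(X')) \;\le\; \max_{w \in \WW^+_\SS} \P(\C_\SS = w \mid \typ^\SS) \;\le\; |G|^{-\alpha_\SS},
\end{equation*}
where $\alpha_\SS = (k_\SS - 1)/(k-1)$ in Regime~(i) and $\alpha_\SS = \rho_\SS$ in Regime~(ii); the second inequality in each sum uses that $\sum_w \P(\C'_\SS = w, \typ^\SS(X')) \le 1$ and the analogous fact for the rotation part.

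In both regimes the exponents satisfy $\alpha_\RR + \alpha_\SS = 1$, so combining the two factors gives $\P(\C = \C', \typ) \le e^{-\omega} |G|^{-1}$, and then $\P(\C = \C' \mid \typ) \le (1+o(1))\,e^{-\omega}|G|^{-1} = o(|G|^{-1})$ since $\omega \to \infty$. I do not anticipate any substantial obstacle here: the two lemmas from the entropic analysis precisely supply the pointwise bounds needed, and the only care required is in matching the conditioning on $\typ^\SS$ when applying Lemmas~\ref{set_comparison} and~\ref{set_comparison_Y}, which is exactly what the typical sets $\WW^+_\SS$ were designed to encode.
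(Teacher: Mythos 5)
Your proposal is correct and takes essentially the same approach as the paper: both rely on the independence of $\C_\RR$ from $(\C_\SS,\typ^\SS)$, the pointwise bound encoded in $\WW^+_\RR$, the pointwise bound from Lemma~\ref{set_comparison} (resp.\ Lemma~\ref{set_comparison_Y}), and Lemma~\ref{typical_event_prob} for $\P(\typ)=1-o(1)$. The only difference is bookkeeping: the paper first bounds $\P(\C = w \mid \typ)$ uniformly over $w \in \WW^+$ and then sums the squares, whereas you factor $\P(\C=\C',\typ)$ into rotation and reflection contributions before summing; both decompositions produce the identical $e^{-\omega}|G|^{-1}$ bound.
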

\begin{proof}
Let $\delta > 0$ be chosen sufficiently small relative to $\ep$, and define 
$\WW^+_\SS := \WW^+_\SS(\delta, t)$ and $\WW^+_\RR := \WW^+_\RR(t)$ as in 
Definition~\ref{typical_event}. Recall also that 
$
\WW^+ = \WW^+_\SS \times \WW^+_\RR.
$

For $w=(w_\RR,w_\SS)\in \WW^+_\RR\times \WW^+_\SS$, we can observe that
\begin{align*}
\P(\C=w|\typ)&=\P(\C=w| \C\in \WW^+, \typ^{\SS})= \frac{\P(\C_\RR=w_\RR,\C_\SS=w_\SS,\typ^\SS)}{ \P( \C \in \WW^+,\typ^\SS)}\\
&=\frac{\P(\C_\RR=w_\RR)\P(\C_\SS=w_\SS|\typ^\SS)\P(\typ^\SS)}{ \P( \C \in \WW^+,\typ^\SS)}.
\end{align*}
In Regime~(i), Definition~\ref{typical_event} yields 
$
\P(\C_\RR = w_\RR) \le e^{-\omega} |G|^{-k_\RR/(k-1)},
$
while Lemma~\ref{set_comparison} gives 
$
\P(\C_\SS = w_\SS \mid \typ^\SS) \le |G|^{-(k_\SS-1)/(k-1)}.
$
Similarly, in Regime~(ii) we have $
\P(\C_\RR = w_\RR) \le e^{-\omega} |G|^{-\rho_\RR}$, and, by Lemma~\ref{set_comparison_Y},
$
\P(\C_\SS = w_\SS \mid \typ^\SS) \le |G|^{-\rho_\SS}.
$

Hence, for $k$ in Regime~(i) or~(ii) and $w\in \WW^+_\RR \times \WW^+_\SS$,  
\begin{align*}
\P(\C = w| \typ) 
&\le e^{-\omega} |G|^{-1} \cdot 
   \frac{\P(\typ^\SS)}{\P(\C \in \WW^+, \typ^\SS)} \\
&\le 2 e^{-\omega} |G|^{-1},
\end{align*}
since $\P(\typ^\SS) = 1 - o(1)$ by~\eqref{typical_event_S_prob} and  
$\P(\C \in \WW^+, \typ^\SS) = 1 - o(1)$ by Lemma~\ref{typical_event_prob}.

The conclusion then follows
\begin{align*}
\P(\C=\C'|\typ)&=\sum_{w\in \WW^+} \P(\C=w|\typ)^2\\
&\leq 2e^{-\omega}|G|^{-1}\sum_{w\in \WW^+}\P(\C=w|\typ)\leq 2e^{-\omega}|G|^{-1}=o(|G|^{-1}).
\end{align*}

\end{proof}

Combining the above estimates, we obtain the desired upper bound on the mixing time.

\begin{prop}\label{ub_mixing}
Let $k$ be in Regime (i) or (ii). Let $\mathcal{S} = \{ Z_a^{\pm 1} : a \in [k] \}$ where $Z_1,\dots,Z_k$ are i.i.d. uniform elements of $G$. For any $\ep > 0$, and $t \geq (1+\ep) t_0(k,G)$, with high probability we have
$$
\| \P_{\mathcal{S}}(X(t)= \cdot) - \pi \|_{\mathrm{TV}} =  o(1),
$$
where $\P_{\mathcal{S}}$ denotes the law of $X$ started at $X(0) = \id$ with generator set ${\mathcal{S}}$.
\end{prop}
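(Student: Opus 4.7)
The plan is to combine the entropic-methodology setup in equations \eqref{tv_conditional_ub}--\eqref{tv_decomp} with the three technical lemmas just established (Lemmas \ref{typical_event_prob}, \ref{nonequalC}, \ref{equalC}), and then pass from the resulting \emph{annealed} estimate to the desired quenched (high-probability-over-$\mathcal{S}$) statement via Markov's inequality applied to a single non-negative random variable.

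Fix $\ep>0$, set $t=(1+\ep)t_0(k,G)$, and pick $\delta>0$ sufficiently small relative to $\ep$ so that all three lemmas apply. For any realization of $\mathcal{S}$, inequality \eqref{tv_conditional_ub} splits $\|\P_\mathcal{S}(X(t)=\cdot)-\pi\|_{\mathrm{TV}}$ into a conditional TV term and the residual $\P(\{\C\in\WW^+,\typ^\SS(X)\}^c)$. The residual does not depend on $\mathcal{S}$, because the events defining $\typ^\SS(X)$ and $\{\C\in\WW^+\}$ involve only the Poisson clocks and the walk choices $(\sigma_i,\eta_i)$, not the labels $(U_a)$; by Lemma \ref{typical_event_prob} it is $o(1)$ uniformly in $\mathcal{S}$.

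For the conditional TV, the $L^2$-type bound stated right after \eqref{tv_conditional_ub} gives
\[
4\,\bigl\|\P_\mathcal{S}(X(t)=\cdot\mid \C\in\WW^+,\typ^\SS(X))-\pi\bigr\|_{\mathrm{TV}}^2 \;\leq\; D_\mathcal{S}(t),
\]
where $D_\mathcal{S}(t):=|G|\cdot \P_\mathcal{S}(X=X'\mid\typ)-1\geq 0$. Taking the expectation over $\mathcal{S}$ and using the splitting in \eqref{tv_decomp}, I would bound
\[
\E_\mathcal{S}[D_\mathcal{S}(t)] \;\leq\; |G|\cdot \P(X=X',\C\neq\C'\mid\typ)+|G|\cdot \P(\C=\C'\mid\typ)-1,
\]
which by Lemmas \ref{nonequalC} and \ref{equalC} equals $(1+o(1))+o(1)-1=o(1)$.

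Since $D_\mathcal{S}(t)$ is non-negative with annealed mean $o(1)$, Markov's inequality forces $D_\mathcal{S}(t)=o(1)$ with probability $1-o(1)$ over $\mathcal{S}$; on that event the conditional TV is $o(1)$, and combined with the (deterministic) residual the full TV distance is $o(1)$ as well. There is no genuine obstacle remaining here: the hard work has been absorbed into the three ingredient lemmas and, ultimately, into the sharp entropic concentration estimates of Section \ref{sec:entropy_auxiliary_process}. The one point requiring care is simply that all events inside $\typ$ must be measurable with respect to the walk dynamics alone (so that $\P(\typ)$ and $\P(\{\C\in\WW^+,\typ^\SS(X)\}^c)$ are deterministic in $\mathcal{S}$), which is built into the definitions in Definition \ref{typical_event}, and that Markov is applied only to the non-negative coalescence quantity $D_\mathcal{S}(t)$.
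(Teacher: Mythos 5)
Your proposal is correct and follows essentially the same route as the paper: split via \eqref{tv_conditional_ub}, bound $D_\mathcal{S}(t)$ in expectation over $\mathcal{S}$ using \eqref{tv_decomp} together with Lemmas~\ref{nonequalC} and~\ref{equalC}, then pass to a quenched statement by applying Markov to the non-negative quantity $D_\mathcal{S}(t)$ and absorb the residual via Lemma~\ref{typical_event_prob}. Your remark that the residual $\P(\{\C\in\WW^+,\typ^\SS(X)\}^c)$ is deterministic (given the conditioning on $(k_\SS,k_\RR)$, since $\C$ is a function of the Poisson clocks and the choices $(\sigma_i,\eta_i)$ only) is a correct and worthwhile clarification that the paper leaves implicit.
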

\begin{proof}
As total variation distance is non-increasing, it suffices to consider $t= (1+\ep) t_0(k,G)$. For simplicity of notation we will drop the time index.
Combining Lemmas~\ref{nonequalC} and~\ref{equalC} in \eqref{tv_decomp} yields
$$|G|\cdot \P(X=X'| \typ)-1=o(1),$$
i.e.,
 \beq\label{expected_L2_distance}
 \E\left[|G|\cdot \P_\mathcal{S}(X=X'|\typ)-1\right]=o(1),
 \eeq
 where the expectation was taken over the choice of $\mathcal{S}$. As 
 $$|G|\cdot \P_\mathcal{S}(X=X'|\typ)-1\geq 4\| \P_\mathcal{S}(X \in \cdot| \C\in\WW^+,\typ^\SS(X)) - \pi \|_{\mathrm{TV}}^2\geq 0$$ 
 almost surely, \eqref{expected_L2_distance} implies that with high probability over the choice of $\mathcal{S}$ we have 
 $$|G|\cdot \P_\mathcal{S}(X=X'|\typ)-1=o(1),$$
and hence with high probability
$$4\| \P_\mathcal{S}(X = \cdot \mid\C\in\WW^+,\typ^\SS(X)) - \pi \|_{\mathrm{TV}}^2=o(1).$$
Since $\P(\C \in \WW^+, \typ^\SS(X)) = 1 - o(1)$ by Lemma~\ref{typical_event_prob}, 
it follows from \eqref{tv_conditional_ub} that, with high probability,
$$\| \P_\mathcal{S}(X =\cdot) - \pi \|_{\mathrm{TV}} \leq \| \P_\mathcal{S}(X = \cdot \mid \C\in\WW^+,\typ^\SS(X) ) - \pi \|_{\mathrm{TV}}+o(1),$$
and the proof is complete.
\end{proof}
\begin{proof}[Proof of Theorem  \ref{cutoff_iid} in Regime $k\lesssim \log|G|$.]
We first reveal $k_\SS$ and $k_\RR$. For any $\eta>0$, with high probability both lie in 
$[(1-\eta)k/2, (1+\eta)k/2]$, so that Assumption~\ref{assump_k_S} holds. 
We condition on these values and treat them as constants close to $k/2$, noting that their precise values only matter to ensure $k_\SS \asymp k$ and $k_\RR \asymp k$ with high probability.

Combining Proposition  \ref{lb_mixing} and \ref{ub_mixing} shows that, for any $\ep>0$, with high probability over the choice of $\mathcal{S}$,
\begin{align*}
\| \P_\mathcal{S}(X(t) = \cdot) - \pi \|_{\mathrm{TV}} =1-o(1)  &\quad\quad\text{ for }t\leq (1-\ep)t_0(k,G),\\
\| \P_\mathcal{S}(X(t) = \cdot) - \pi \|_{\mathrm{TV}} =o(1)  &\quad\quad\text{ for }t\geq (1+\ep)t_0(k,G).
\end{align*}
This establishes the occurrence of cutoff at time $t_0(k,G)$ with high probability.
\end{proof}

\section{Proof in Regime $k\gg\log|G|$}\label{sec:upper_regime}
In this regime, one can establish a more general cutoff result for all virtually Abelian groups, i.e., groups that contain an Abelian subgroup of finite index. The proof builds on and extends arguments from the literature---particularly those in \cite{roichman1996random} and \cite{dou1996enumeration}---to address the continuous-time setting and, more importantly, to establish cutoff throughout the  regime $k \gg \log |G|$.

\mn
\textbf{Theorem \ref{cutoff_virtually_abelian}.}
\textit{
 Let $G=G^{(n)}$ be a finite virtually Abelian group with $n\in\mathbb{N}$. Suppose $H=H^{(n)}$ is a normal Abelian subgroups of $G^{(n)}$ such that $\sup_n |G^{(n)}|/|H^{(n)}|<\infty$. Let $k=k_n \in \mathbb{N}$ and $\mathcal{S}=\{ Z_i^{\pm1}: i\in [k]\}$ with $Z_1,\dots,Z_k$ i.i.d.\ and uniformly distributed over $G$. When $k\gg \log|G|$ and $\log k\ll \log|G|$, the random walk on $\mathrm{Cay}(G,\mathcal{S})$ exhibits cutoff with high probability  at time 
 $$t_0(k,G)= \frac{\log |G|}{\log(k/\log|G|)}.$$ 
}
\begin{remark}
Using the same argument, cutoff can also be shown for a wider class of finite groups satisfying Assumption 1 in \cite{dou1996enumeration}. For simplicity of exposition, we restrict our attention to virtually Abelian groups.
\end{remark}

 Since dihedral groups are virtually Abelian with normal Abelian subgroups $H=\ZZ_n$, Theorem \ref{cutoff_virtually_abelian} yields part of the result in Theorem \ref{cutoff_iid} in the regime $k\gg\log|G|$, hence completing the proof of Theorem \ref{cutoff_iid}. 
Throughout this section, we always assume $t\ll k$ as the proposed mixing time $t_0(k,G)\ll k$. For simplicity of notation, we will often omit the time index $t$ when the dependence on $t$ is clear.

\subsection{Upper bound on mixing time}
Hermon and Olesker-Taylor \cite[\S 7.2]{hermon2021cutoff} adapted Roichman's argument \cite{roichman1996random} to obtain an upper bound on the mixing time that applies to arbitrary finite groups with uniformly random generators. The key idea is to show that there exists a  generator that appears exactly once in the walk; due to the uniform choice of this generator, the resulting walk is then uniformly distributed over the group. This argument turns out to yield the correct upper bound on the mixing times for walks on virtually Abelian groups, where the crucial point is to perform the estimates sharply.

As the proof sketch in \cite[\S 7.2]{hermon2021cutoff} was concise, we present a detailed version here to ensure the discussion is fully self-contained. Throughout this section we will assume $k\gg \log|G|$ and $\log k\ll \log|G|$.
We will sometimes regard $X$ as a sequence, emphasizing the generators applied to obtain $X(t)$ up to a fixed time $t$. In what follows, this time $t$ will be chosen as $(1 \pm \varepsilon)t_0(k,G)$ in the upper and lower bound arguments, respectively. For simplicity, we often omit the time index when it is clear from context.

For each $i \geq 0$, define 
$$
J_i = \{\, a \in [k] : Z_a^{\pm 1} \text{ appears exactly $i$ times in the sequence } X \,\},
$$
and let $J_{\geq 2}:= \bigcup_{i \geq 2} J_i$ for simplicity of notation. Let $X'$ be an independent copy of the walk $X$, and define sets $\{J'_i\}_{i\geq 0}$ for the sequence $X'$ similarly. 

Let $\delta>0$ be a fixed constant and $L=L(\delta):=\min\{\delta t, \delta^2 k/2\}$. Define 
\begin{align}\label{once_typ}
 \typ(\delta)&:=\{ |J_1-te^{-t/k}|\leq \delta t e^{-t/k}, |J_{\geq 2}|\leq L\} \cap \{ |J'_1-te^{-t/k}|\leq \delta t e^{-t/k},|J'_{\geq 2}|\leq L\}.
\end{align}
As each generator arrives according to an independent Poisson process of rate $t/k$, we can see that 
\beq\label{J_i}
|J_i|\sim  \mathrm{Binomial}\left(k,\frac{(t/k)^i}{i!}e^{-t/k}\right).
\eeq
Therefore, standard large deviations calculation implies that for any $\delta>0$,
\beq\label{typical_prob}
\P(\typ(\delta))=1-o(1),
\eeq
as $k\to\infty$, since $t\ll k$.

Recall that for $a\in [k]$, $N_a$ denotes the number of arrivals of $Z_a^{\pm 1}$ in the walk $X$, and $N'_a$ denotes the same for $X'$. Define the event
\beq\label{event_B}
\mathcal{B}=\cup_{a\in [k]}\{ N_a=1, N'_a=0\} \cup\{ N_a=0, N'_a=1\},
\eeq
on which $X(X')^{-1}$ contains a random uniform generator that appears exactly once, so that $X(X')^{-1}$ is uniformly distributed over $G$.

As preparation we first prove the following estimate.
\begin{lemma}\label{largek_ub}
Let $\ep>0$ be fixed and let $t= (1+\ep) \frac{\log |G|}{\log(k/\log|G|)}$. For $\delta>0$ sufficiently small relative to $\ep$, we have $\P(\BB^c|\typ(\delta))=o(|G|^{-1})$.
\end{lemma}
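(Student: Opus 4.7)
The plan is to condition on the multiplicity profiles of $X$ and $X'$, reduce $\BB^c$ to a hypergeometric containment bound via exchangeability of generator labels, and then plug in the definition of $t_0(k,G)$ to show this bound is $o(|G|^{-1})$.

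First I would observe that $\BB^c$ forces in particular $J_1 \cap J_0' = \emptyset$, i.e., $J_1 \subseteq J_1' \cup J_{\geq 2}'$. Since the generator indices are i.i.d.\ uniform on $[k]$, conditional on the multiplicity sizes $(|J_i|, |J_i'|)_{i\ge 0}$ the partitions $(J_i)$ and $(J_i')$ are independent and each uniformly distributed over labelings with the prescribed sizes. Writing $c := |J_1'| + |J_{\geq 2}'|$, the set $J_1' \cup J_{\geq 2}' = [k]\setminus J_0'$ is a uniformly random $c$-subset of $[k]$ independent of $J_1$, so that
$$
\P\bigl(J_1 \subseteq J_1'\cup J_{\geq 2}' \,\bigm|\, \text{sizes}\bigr)
= \frac{\binom{k-|J_1|}{c-|J_1|}}{\binom{k}{c}}
\;\leq\; \left(\frac{c}{k}\right)^{|J_1|}.
$$

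Next I would use $\typ(\delta)$ to control $|J_1|$ and $c$. Since $t \ll k$ in this regime we have $e^{-t/k} = 1-o(1)$ and $L \le \delta t$ eventually, giving $|J_1| \ge (1-2\delta)t$ and $c \le (1+2\delta)t$ for large $|G|$. To bound the resulting quantity $((1+2\delta)t/k)^{(1-2\delta)t}$ I would set $u := k/\log|G|$ (which tends to infinity by assumption) and substitute $t = (1+\ep)\log|G|/\log u$ into $\log(k/t) = \log u + \log\log u - \log(1+\ep)$ to obtain
$$
t\log(k/t) \;=\; (1+\ep)\log|G|\cdot\Bigl(1 + \tfrac{\log\log u - \log(1+\ep)}{\log u}\Bigr) \;=\; (1+\ep+o(1))\log|G|.
$$
In particular $t = o(\log|G|)$, so the correction $(1-2\delta)t\log(1+2\delta) = O(\delta t)$ is negligible compared to $\log|G|$. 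Choosing $\delta$ small enough that $(1-2\delta)(1+\ep) > 1+\ep/2$, one obtains a bound of at most $|G|^{-(1+\ep/4)}$ for all large $|G|$, uniformly over sizes compatible with $\typ(\delta)$.

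Averaging this uniform bound over such sizes gives $\P(\BB^c \cap \typ(\delta)) = o(|G|^{-1})$, and dividing by $\P(\typ(\delta)) = 1 - o(1)$ from~\eqref{typical_prob} yields the claim. The main obstacle is purely bookkeeping: one must carefully track that the $(1+\ep)$-slack in $t_0(k,G)$ survives the passage through the sub-leading $\log\log u / \log u$ term in the asymptotics of $t\log(k/t)$ as well as the $O(\delta)$ corrections coming from $e^{-t/k}$ and from $L$, to deliver in the end a power strictly greater than $1$.
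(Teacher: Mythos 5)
Your proof is correct and follows essentially the same route as the paper's: condition on the multiplicity profile of $X$ and $X'$, use exchangeability of the generator indices to bound the conditional probability of $\BB^c$, and then combine $\typ(\delta)$ with the asymptotics of $t\log(k/t)$. The only minor variation is in the combinatorial step: the paper reduces $\BB^c$ to the two-sided constraint $J_1\cap\KK=J_1'\cap\KK$ with $\KK=[k]\setminus(J_{\ge 2}\cup J_{\ge 2}')$ and bounds it by $1/\binom{K}{K_1}$ via Stirling, whereas you use the one-sided containment $J_1\subseteq[k]\setminus J_0'$ together with the elementary product bound $(c/k)^{|J_1|}$, which is slightly weaker but simpler and still delivers the required exponent.
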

\begin{proof}
We will write $\typ=\typ(\delta)$. Let $\FF=\sigma(J_0,J_1,J_{\geq 2},J'_{\geq 2},\typ)$ denote the $\sigma$-field generated by the information on the indices in $J_0,J_1,J_{\geq 2},J'_{\geq 2}$ and $\1_\typ$. Let $\mathcal{K}=[k]\setminus (J_{\geq 2}\cup J_{\geq 2}')$. On $\BB^c$, as $J_1\cap J'_0=\emptyset$ and $J'_1\cap J_0=\emptyset$, one can deduce that on $\BB^c$,
$$J'_1\cap \mathcal{K}=J_1\cap \mathcal{K}.$$ Further note that conditioning on $\FF$, the indices in $\mathcal{K}$ are exchangeable, i.e., any choice of $|J_1\cap \KK|$ indices from $\KK$ is equally likely to be the indices consisting  $J_1'\cap \KK$. Therefore, letting $K_1=|J_1\cap\KK|$ and $K=|\KK|$,
\begin{align}\label{error_B_complement}
\nonumber\P(\BB^c| \FF)&\leq \P( J'_1\cap \mathcal{K}=J_1\cap \mathcal{K}| \FF)=\frac{ 1}{ { K \choose K_1}}\\
\nonumber&\lesssim \sqrt{ K_1} \exp\left( -K\log K+(K-K_1)\log(K-K_1)+K_1\log K_1\right)\\
&\lesssim \sqrt{K_1} \exp\left(K_1\log(K_1/K)+(K-K_1)\log(1-K_1/K)\right).
\end{align}

Observe that on $\typ$, when $|G|$ is sufficiently large, we have
$$
K_1 \in \left[ (1-\delta) t e^{-t/k} - 2L,\ (1+\delta) t e^{-t/k} \right] \subseteq \left[(1 - 4\delta)t,\ (1 + \delta)t\right] =: [M^-, M^+],
$$
and $K \geq k - 2L \geq (1 - \delta^2)k$. Noting that $K_1 \asymp t \ll k \asymp K$, we can see $|(K-K_1)\log(1-K_1/K)|\asymp K_1\asymp t$. Hence, on $\typ$, we have
\begin{align*}
&K_1\log(K_1/K)+ (K-K_1)\log(1-K_1/K)\\
\leq &M^-\log(M^+/K)+O(t)\leq (1-4\delta)t\log\left(\frac{(1+\delta)t}{(1-\delta^2)k}\right)+O(t)\\
\leq & (1-8\delta)t\log(t/k).
\end{align*}
It then follows from \eqref{error_B_complement} that
\begin{align*}
\P(\BB^c| \FF)&\lesssim  \sqrt{t}\exp\left( (1-8\delta)t\log(t/k) \right).
\end{align*}
Recalling that $t= (1+\ep) \frac{\log |G|}{\log(k/\log|G|)}$, it is not difficult to see 
$$t\log(t/k)=(1+\ep)\log|G|\cdot \left( -1+\frac{\log(1+\ep)-\log\log(k/\log|G|)}{\log(k/\log|G|)}\right)\leq -(1+\ep/2)\log|G|.$$  
Therefore, when $\delta$ is sufficiently small compared to $\ep$, we have
\begin{align*}
\P(\BB^c,\typ)&=\E[ \1_\typ \P(\BB^c|\FF)]\lesssim \sqrt{t}\exp\left( (1-8\delta)t\log(t/k))\right)\\
&\lesssim \sqrt{t}\exp\left( -(1-8\delta)(1+\ep/2)\log|G|\right)=o(|G|^{-1}).
\end{align*}
The proof is completed by recalling $\P(\typ)=1-o(1)$ from \eqref{typical_prob}.
\end{proof}

\mn
\textit{Proof of Theorem \ref{cutoff_virtually_abelian}: Upper Bound on Mixing Times.} Let $\P_{\mathcal{S}}$ denote the law of the random walk given the generator set $\mathcal{S}$.
For any $\ep>0$, we will show that when $t\geq (1+\ep)\frac{\log |G|}{\log(k/\log|G|)}$, with high probability over the choice of $\mathcal{S}$ we have $\| \P_{\mathcal{S}}(X(t)= \cdot)-\pi\|_{TV}=o(1)$. As the total variation distance is non-increasing, it suffices to consider $t=(1+\ep)\frac{\log |G|}{\log(k/\log|G|)}$, and we will drop the time index from now on.

By Lemma \ref{TV_entropic} we have 
$$
\big\| \mathbb{P}_{\mathcal{S}}(X =\cdot) - \pi \big\|_{\mathrm{TV}}
\;\leq\;
\big\| \mathbb{P}_{\mathcal{S}}(X = \cdot \mid \typ(\delta)) - \pi \big\|_{\mathrm{TV}}
\;+\; \mathbb{P}\big( (\typ(\delta))^c \big),
$$
and 
$$4\big\| \mathbb{P}_{\mathcal{S}}(X = \cdot | \typ(\delta)) - \pi \big\|_{\mathrm{TV}}^2\leq |G|\cdot \P_{\mathcal{S}}(X=X'|\typ(\delta))-1.$$

By \eqref{typical_prob}, we have $\P((\typ(\delta)^c)=o(1)$, where the probability is independent of the choice of $\mathcal{S}$. It therefore suffices to show that
$$D(t):=|G|\cdot \P(X=X'|\typ(\delta))-1=\E[ |G|\cdot \P_{\mathcal{S}}(X=X'|\typ(\delta))-1]=o(1),$$
which in turn implies  $\| \P_{\mathcal{S}}(X(t)= \cdot)-\pi\|_{\mathrm{TV}}=o(1)$ with high probability. 

On the event $\mathcal{B}$ defined in \eqref{event_B}, let $Z_a$ denote a generator that appears exactly once in $X(X')^{-1}$ (the sign $\pm 1$ being irrelevant, so we simply write $Z_a$). We can express $X(X')^{-1}=YZ_aY'$ where $Y,Y'$ represent the rest of the sequence and they are independent from $Z_a$. As $Z_a$ is a uniform element of $G$ chosen independently, we can see that $X(X')^{-1}\sim \Unif(G)$ on $\mathcal{B}$. It thus follows from Lemma  \ref{largek_ub} that
$$D(t)\leq |G|\cdot \left(|G|^{-1}+\P(\mathcal{B}^c|\typ(\delta))\right)-1=o(1).$$

\qed

\subsection{Lower bound on mixing time} 

A proof sketch for a lower bound on the mixing time was given by Dou and Hildebrand~\cite{dou1996enumeration} for a certain class of groups under the choice 
$k=\lfloor (\log |G|)^{\alpha}\rfloor$ with $\alpha>1$; see their Assumption~1 and Theorem~4, which also cover the case of virtually Abelian groups. However, in attempting to complete the argument, we found that several key details were omitted, and these are crucial for establishing the proof with full rigor. In addition, a refined combinatorial argument is needed to prove the desired  lower bound for the entire regime  $k \gg \log |G|$. For these reasons, we present the full proofs here.

\medskip

Let $G^{(n)}$ be a virtually Abelian group and $H^{(n)}\trianglelefteq G^{(n)}$ denote its normal Abelian subgroup of  bounded index, i.e., $\sup_n [G^{(n)}:H^{(n)}]=\sup_n |G^{(n)}|/|H^{(n)}|<\infty$. For simplicity of notation, we will drop the index $n$ from now on and write $G=G^{(n)}, H=H^{(n)}$. Let $m=m_n:=[G:H]$ and we can write $G/H=\{b_iH: i\in [m]\}$ and $G=\{ b_i h: i\in [m], h\in H\}$. For each $h\in H$, define its set of conjugates by
$$\mathrm{Conj}(h):=\{ b_i hb_i^{-1}: i\in [m]\}\subseteq H.$$

It is not hard to check that for $h,h'\in H$, either $\mathrm{Conj}(h)\cap \mathrm{Conj}(h')=\emptyset$ or $\mathrm{Conj}(h)=\mathrm{Conj}(h')$. We will refer to $\{\mathrm{Conj}(h):h\in H\}$ as equivalent classes, and say $h,h'\in H$ are in the same equivalent class if $ \mathrm{Conj}(h)=\mathrm{Conj}(h')$. Since each equivalent class has size at most $m$, $H$ contains at least $\lfloor |H|/m \rfloor$ disjoint equivalent classes.

Each generator $Z_a$ can be expressed in the form
\beq\label{BV_decomp}
Z_a=B_aV_a,
\eeq  
where $(B_a)_{a\in [k]}\overset{\mathrm{iid}}{\sim}\Unif (\{ b_i : i\in [m]\})$ and $(V_a)_{a\in[k]}\overset{\mathrm{iid}}{\sim}\Unif(H)$. A set of generators $\mathcal{S}=\{Z_a^{\pm1}: a\in [k]\}$ is said to be \emph{good} if 
\beq
\text{$(V_{a})_{a\in[k]}$ all belong to distinct equivalent classes in $H$}.
\eeq
We first verify that $\mathcal{S}$ is good with high probability, see Lemma \ref{error_good}.
\begin{lemma}\label{error_good}
When $t \ll k$ and $\log k \ll \log|G|$, if $(V_a)_{a\in[k]}\overset{\mathrm{iid}}{\sim}\Unif(H)$ then the resulting generator set $\mathcal{S}=\{Z_a^{\pm1}: a\in [k]\}$ is good with high probability.
\end{lemma}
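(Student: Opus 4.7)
The plan is to prove this via a standard birthday-type union bound. The key structural input is that each equivalence class $\mathrm{Conj}(h) \subseteq H$ has cardinality at most $m = [G : H]$, and by assumption $\sup_n m_n < \infty$, while $|H| = |G|/m$ grows with $|G|$. Thus $H$ is partitioned into at least $|H|/m$ equivalence classes, each of size at most $m$, so two independent uniform draws from $H$ are unlikely to land in the same class.

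First, I would estimate the pairwise collision probability. For any $a \neq b$, conditioning on $V_a$ and using uniformity of $V_b$ gives
$$\P(V_a \sim V_b) \;=\; \frac{1}{|H|^2}\sum_{h\in H}|\mathrm{Conj}(h)| \;\leq\; \frac{m}{|H|}.$$
A union bound over the $\binom{k}{2}$ unordered index pairs then yields
$$\P\bigl(\mathcal{S} \text{ is not good}\bigr) \;\leq\; \binom{k}{2}\,\frac{m}{|H|} \;\leq\; \frac{k^2 m^2}{2|G|}.$$

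Finally, I would invoke the hypotheses: $\sup_n m_n < \infty$ together with $\log k \ll \log |G|$ force $k = |G|^{o(1)}$, so $k^2 m^2/|G| = o(1)$, and therefore $\mathcal{S}$ is good with high probability. The auxiliary hypothesis $t \ll k$ plays no role in this particular lemma; it is used elsewhere in the mixing-time analysis. There is no substantive obstacle here, since the bounded-index assumption reduces the claim to an elementary birthday estimate; the only point that deserves a line of justification is the pairwise bound, which uses $|\mathrm{Conj}(h)| \leq m$ for every $h \in H$.
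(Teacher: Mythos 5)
Your proof is correct and follows essentially the same birthday-type union bound as the paper; the only organizational difference is that you sum over the $\binom{k}{2}$ index pairs while the paper sums over the (at most $|H|$) equivalence classes and uses a binomial tail bound, but both routes yield the same $O(k^2/|G|)=o(1)$ estimate. Your side remark that the hypothesis $t\ll k$ is not used here is also accurate.
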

\begin{proof}
This follows from the simple observation that, since the $(V_a)_{a \in [k]}$ are sampled i.i.d.\ uniformly from $H$, each equivalence class is chosen independently for the sampling of each $a \in [k]$ with probability at most $m/|H|$.
Hence, a simple union bound gives
\begin{align*}
\P( \mathcal{S} \text{ is not good})&\leq |H|\cdot \P( \mathrm{Binomial}( k, m/|H|)\geq 2)\leq |H|\cdot {k\choose 2}(m/|H|)^2 \lesssim k^2/|G|=o(1),
\end{align*}
where the last inequality follows as $ \log k\ll \log|G|$.
\end{proof}

Recall that for $i \ge 1$, 
$
J_{\ge i}= \cup_{\ell \ge i} J_\ell
$
denotes the set of indices of generators that appear at least $i$ times in $X$, and that $N = N(t)$ denotes the total number of steps taken in $X$. Throughout this section, we adopt a new definition of the typical event. Let $\delta>0$ and define
\begin{equation}\label{typical_event_lb}
\typ(\delta) := 
\Bigl\{ \bigl|\, |J_1| - t e^{-t/k} \,\bigr| \le \delta \, t e^{-t/k} \Bigr\} 
\;\cap\; 
\Bigl\{ \bigl|\, N(t) - t \,\bigr| \le \delta t \Bigr\} 
\;\cap\; 
\Bigl\{ \sum_{i \ge 1} |J_{\geq i}| \log(i!) \le t \Bigr\},
\end{equation}
where the inclusion of the last event is a technical point that will become clear in the subsequent argument. 
\begin{lemma}\label{error_typical}
Suppose $t \ll k$ and $\log k \ll \log|G|$. For any $\delta>0$, 
$$
\mathbb{P}\bigl(  \typ(\delta) \bigr) = 1 - o(1).
$$
\end{lemma}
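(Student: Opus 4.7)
The plan is to show that each of the three defining events in \eqref{typical_event_lb} holds with probability $1-o(1)$ and to conclude via a union bound. Two preliminary observations are worth keeping in mind: the regime assumption $\log k\ll\log|G|$ together with $t=t_0(k,G)=\log|G|/\log(k/\log|G|)$ guarantees that $t\to\infty$, while the hypothesis $t\ll k$ is in force throughout this section.

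The first two events succumb to standard concentration. By \eqref{J_i}, $|J_1|\sim\mathrm{Binomial}(k,(t/k)e^{-t/k})$ has mean $te^{-t/k}\asymp t$ and variance at most $te^{-t/k}$; Chebyshev's inequality then gives failure probability $O(1/(\delta^2 t))=o(1)$. For the second event, $N(t)\sim\mathrm{Poisson}(t)$ and a standard Chernoff bound for the Poisson yields $\P(|N(t)-t|>\delta t)\le 2e^{-c_\delta t}=o(1)$.

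The third event is where I expect the main (though still routine) technical care to be needed. I would rewrite
\[
\sum_{i\ge 1}|J_{\ge i}|\log(i!)\;=\;\sum_{a\in[k]}\sum_{i=1}^{N_a}\log(i!),
\]
where $(N_a)_{a\in[k]}$ are i.i.d.\ $\mathrm{Poisson}(t/k)$ random variables by thinning of the rate-$1$ driving Poisson process. Since $\log(1!)=0$, the inner sum vanishes unless $N_a\ge 2$. Bounding crudely $\sum_{i=1}^{j}\log(i!)\le j^2\log j$ for $j\ge 2$ and invoking the Poisson moments with $t/k\to 0$, the expectation is dominated by the $j=2$ contribution, so that $\E[\sum_{i=1}^{N_a}\log(i!)]=O((t/k)^2)$. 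Summing over $a\in[k]$ yields
\[
\E\Bigl[\sum_{i\ge 1}|J_{\ge i}|\log(i!)\Bigr]=O(t^2/k)=o(t)
\]
since $t\ll k$, and Markov's inequality then gives failure probability $O(t/k)=o(1)$. The essential input is that the rapid Poisson tail decay at rate $t/k\to 0$ comfortably dominates the mild $\log(i!)$ growth, ensuring that the contribution of higher-multiplicity indices is negligible on the scale $t$; any looser handling of this step would fail to separate the two competing factors.
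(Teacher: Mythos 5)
Your proof is correct and follows essentially the same route as the paper's. The first two events are dispatched by exactly the kind of "standard concentration estimates" the paper invokes (Chebyshev for the Binomial $|J_1|$, Chernoff for the Poisson $N(t)$), and for the third event your reorganization $\sum_{i\ge 1}|J_{\ge i}|\log(i!)=\sum_{a\in[k]}\sum_{i=1}^{N_a}\log(i!)$ is just a per-generator rewriting of the paper's direct bound on $\E[|J_{\ge i}|]$; after taking expectations and using a crude bound on $\log(i!)$ (you use $\sum_{i\le j}\log(i!)\le j^2\log j$, the paper uses $\log(i!)\le i(i-1)$), both arrive at $\E[\cdot]=O(t^2/k)=o(t)$ and finish with Markov.
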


\begin{proof}
The events $ \left\{| |J_1|- t e^{-t/k} |\leq  \delta t e^{-t/k}\right\}, \left\{ |N(t)-t| \leq  \delta t \right\}$ occur with high probability due to standard concentration estimates.  As noted in \eqref{J_i}, each $|J_i|\sim \mathrm{Binomial}(k, \frac{1}{i!}(t/k)^i e^{-t/k})$ and hence
\begin{align*} 
\E\left[|J_{\geq i}|\right]&= \sum_{\ell\geq i}\E[|J_\ell|]= ke^{-t/k}\sum_{\ell \geq i} \frac{(t/k)^\ell }{\ell!}\leq  ke^{-t/k} (t/k)^i\sum_{\ell \geq i} \frac{(t/k)^{\ell-i} }{(\ell-i)!i!}\leq k\frac{(t/k)^i}{i!}.
\end{align*}
Using the fact that $\log(i!)\leq i(i-1)$ for all $i\geq 1$ and $t\ll k$, we obtain
\begin{align*}
\E\left[\sum_{i\geq 1}\log (i!) |J_{\geq i}|\right]&\leq \sum_{i\geq 1} i(i-1)\E\left[|J_{\geq i}|\right]\leq \sum_{i\geq 1}i (i-1) k\frac{(t/k)^i}{i!}=e^{t/k}(t^2/k)\leq 2t^2/k.
\end{align*}
Markov inequality then gives 
$$\P\left( \sum_{i\geq 1}|J_{\geq i}| \log(i!)\geq t\right)\leq 2t/k=o(1).$$

\end{proof}

\begin{lemma}
Suppose $k\gg\log|G|$ and $\log k\ll \log|G|$. For any $\ep>0$ and  $t\leq (1-\ep) \frac{\log |G|}{\log(k/\log|G|)}$, we have, with high probability, 
$$\| \P_\mathcal{S}(X(t)= \cdot)-\pi\|_{\mathrm{TV}}=1-o(1).$$
\end{lemma}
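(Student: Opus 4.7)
The plan is to exhibit, for any good generator set $\mathcal{S}$, a deterministic set $E = E(\mathcal{S}) \subseteq G$ that contains $X(t)$ on the event $\typ(\delta)$ with $|E| = o(|G|)$; this forces $\|\P_\mathcal{S}(X(t) = \cdot) - \pi\|_{\mathrm{TV}} \ge \P_\mathcal{S}(X(t) \in E) - \pi(E) \ge \P(\typ(\delta)) - o(1) = 1 - o(1)$. Since $\mathcal{S}$ is good with high probability by Lemma~\ref{error_good} and $\typ(\delta)$ has probability $1 - o(1)$ (uniformly in $\mathcal{S}$) by Lemma~\ref{error_typical}, this will yield the claim with high probability over $\mathcal{S}$.

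To construct $E$, I decompose $X(t) = b(X) \cdot h(X)$ with $b(X) \in \{b_1,\dots,b_m\}$ the coset representative and $h(X) \in H$. Writing $Z_a = B_a V_a$ and using that $H$ is normal and abelian, collecting the $B_a$'s to the left in any word in the generators produces $h(X) = \sum_{a \in [k]} h_a$ with $h_a = \sum_{c \in \mathrm{Conj}(V_a)} f_a(c)\,c$ for some integer-valued signed-count function $f_a$ satisfying $\sum_c (f_a(c)^+ + f_a(c)^-) = n_a$, where $n_a$ is the number of occurrences of $Z_a^{\pm 1}$. The number of possible $f_a$ is at most the number of compositions of $n_a$ into $2m$ non-negative parts, namely $\binom{n_a + 2m - 1}{2m-1}$. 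Summing over configurations of $(J_1, J_{\ge 2}, (n_a)_a)$ compatible with $\typ(\delta)$, I will obtain
$$|E| \;\le\; m \sum_{J_1, J_{\ge 2}} \binom{k}{|J_1|}\binom{k-|J_1|}{|J_{\ge 2}|}\,(2m)^{|J_1|}\sum_{(n_a)_{a \in J_{\ge 2}}}\prod_{a \in J_{\ge 2}}\binom{n_a + 2m-1}{2m-1}.$$

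To bound $\log|E|$: first, I will deduce from $\typ(\delta)$ that $|J_1| = (1+O(\delta))\,t$ and, via $\sum_{a \in J_{\ge 2}} n_a = N(t) - |J_1| \le 2\delta t + t^2/k$, that $|J_{\ge 2}| = O(\delta t + t^2/k)$. The dominant term $\log\binom{k}{|J_1|}$ will be bounded by $(1+O(\delta))\,t\log(k/t) + O(t)$, while the secondary $\log\binom{k-|J_1|}{|J_{\ge 2}|}$ gives $O(\delta t \log(k/t)) + O(\delta t \log(1/\delta))$. The per-generator conjugate-count product will be bounded by $O(mt)$ via $\log\binom{n+2m-1}{2m-1} \le (2m-1)\log(n+2m)$ combined with the constraint $\sum_a \log(n_a!) \le t$ from $\typ(\delta)$, and the composition count over $(n_a)_{a\in J_{\ge 2}}$ will be $2^{O(\delta t + t^2/k)}$ via a generating-function argument. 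Combining these and using the sharp asymptotic $t_0(k,G)\log(k/t_0(k,G)) = (1+o(1))\log|G|$ together with $t_0(k,G) = o(\log|G|)$ (so every additive $O(t)$ error is subleading), at $t = (1-\ep)\,t_0(k,G)$ I arrive at $\log|E| \le [(1+O(\delta))(1-\ep) + O(\delta)]\log|G| + o(\log|G|)$. Taking $\delta$ small relative to $\ep$ will then give $\log|E| \le (1 - \ep/2)\log|G|$ for large $|G|$.

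The main obstacle will be isolating the leading constant strictly below $1$: since $\log\binom{k}{|J_1|}$ alone is already essentially $(1-\ep)\log|G|$, the slack must come entirely from the sharp asymptotic $t_0(k,G)\log(k/t_0(k,G)) = (1+o(1))\log|G|$ and from $t_0(k,G) = o(\log|G|)$, which together make all lower-order contributions genuinely subleading and force the $|J_{\ge 2}|$-binomial to contribute only an $O(\delta)$-fraction. A secondary subtlety is that the conjugacy classes $\mathrm{Conj}(V_a)$ for distinct $a$ need not generate algebraically independent subgroups of $H$; nonetheless, the upper bound $|E| \le m \prod_a \binom{n_a + 2m - 1}{2m-1}$ remains valid because any collision among different $(h_a)_a$ collections only reduces $|E|$.
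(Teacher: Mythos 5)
Your proposal is correct and arrives at the same conclusion — the support of $X(t)$ conditioned on $\typ(\delta)$ has size $|G|^{1-\Omega(\ep)}$ — but via a genuinely different combinatorial accounting. The paper writes $X = B_{\sigma_1}\cdots B_{\sigma_N}\cdot\bar V_{\sigma_1}\cdots\bar V_{\sigma_N}$, bounds the first factor crudely by $m^N$, and counts the second factor as (alphabet size $km$)$^N$ divided by the permutation multiplicity $N!/\prod_i(i!)^{|\bar J_i|}$; this requires the goodness of $\mathcal{S}$ to guarantee $|\bar J_i|\leq |J_{\geq i}|$ so that the factorial constraint in $\typ(\delta)$ can be applied. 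You instead parameterize $X$ directly by the coset $b(X)$ (at most $m$ choices, sharper than $m^N$) and the per-generator signed conjugate-count profiles $(f_a)_{a\in[k]}$, each profile having at most $\binom{n_a+2m-1}{2m-1}$ possibilities, and then sum over configurations $(J_1,J_{\geq 2},(n_a))$ compatible with $\typ(\delta)$. Your $\sum_a \log(n_a!) \leq t$ constraint is a consequence of the paper's $\sum_i|J_{\geq i}|\log(i!)\leq t$ since $\sum_a\log(n_a!) = \sum_i|J_i|\log(i!)\leq\sum_i|J_{\geq i}|\log(i!)$. Notably, your parameterization is manifestly well-defined as a surjection onto possible values of $X$ regardless of whether the $\mathrm{Conj}(V_a)$ overlap, so the goodness assumption from Lemma~\ref{error_good} is not actually needed for your bound (though it is harmless to invoke it). Both routes ultimately hinge on the same two facts: the sharp asymptotic $t_0\log(k/t_0)=(1+o(1))\log|G|$ together with $t_0 = o(\log|G|)$, which make all the lower-order terms ($|J_1|\log(2m)$, $\log\binom{k-|J_1|}{|J_{\geq 2}|}$, the conjugate-profile product, the composition count) genuinely subleading once $\delta$ is small relative to $\ep$. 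Your accounting is arguably cleaner, at the cost of slightly more bookkeeping in the sum over configurations.
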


\begin{proof}
As the total variation distance is non-increasing in time, we may, without loss of generality, consider
$
t = (1 - \varepsilon)\, \frac{\log |G|}{\log(k/\log |G|)}.
$
By Lemma \ref{error_good}, it  suffices to prove $\| \P_\mathcal{S}(X(t)= \cdot)-\pi\|_{\mathrm{TV}}=1-o(1)$  for a good generator set $\mathcal{S}$. Thus from now on, we suppose $\mathcal{S}$ is given and is good.

For simplicity of notation, we will drop the time index from now on and write $X=X(t), N=N(t)$. 
Observe that
$$
\left\| \mathbb{P}_{\mathcal{S}}(X= \cdot) - \pi \right\|_{\mathrm{TV}} 
\geq \left\| \mathbb{P}_{\mathcal{S}}(X = \cdot | \typ(\delta) ) - \pi \right\|_{\mathrm{TV}} 
- \mathbb{P}((\typ(\delta))^c).
$$
 Lemma \ref{error_typical} states that $\mathbb{P}((\typ(\delta))^c)=o(1)$ and hence it suffices to show
$
\left\| \mathbb{P}_{\mathcal{S}}(X= \cdot | \typ(\delta)) - \pi \right\|_{\mathrm{TV}} = 1 - o(1).
$

Based on \eqref{BV_decomp} the walk $X$ can be expressed as 
$$X=\prod_{i=1}^{N} (B_{\sigma_i}V_{\sigma_i})=B_{\sigma_1}V_{\sigma_1}\cdots B_{\sigma_{N}}V_{\sigma_{N}}.$$
We will rearrange the sequence $X$ in order to have all $V_{\sigma_i}$'s moved to the right side of the sequence. For instance, the first rearrangement performed is
$$B_{\sigma_{N-1}}V_{\sigma_{N-1}}B_{\sigma_{N}}V_{\sigma_{N}}=B_{\sigma_{N-1}}B_{\sigma_{N}}\bar V_{\sigma_{N-1}}V_{\sigma_{N}},$$
where $\bar V_{\sigma_{N-1}}:=B_{\sigma_{N}}^{-1} V_{\sigma_{N-1}}B_{\sigma_{N}}$ denotes the result of rearrangement. Continuing this way will yield the rearranged sequence
$$X=B_{\sigma_1}\cdots B_{\sigma_{N}} \cdot \bar V_{\sigma_1}\cdots \bar V_{\sigma_{N-1}}\bar V_{\sigma_{N}}$$
where $\bar V_{\sigma_{N}}=V_{\sigma_{N}}$ and $\bar V_{\sigma_i}=(\prod_{j>i} B_{\sigma_j})^{-1}V_{\sigma_i}(\prod_{j>i} B_{\sigma_j})\in\mathrm{Conj}(V_{\sigma_i})$ for all $i< N$.

Since $\mathcal{S}$ is assumed to be good, all $(V_a)_{a \in [k]}$ belong to distinct equivalence classes in $H$. For any $\sigma_i \neq \sigma_j$, we have $\bar V_{\sigma_i} \neq \bar V_{\sigma_j}$. 
For $i \ge 1$, let
\[
|\bar J_i| := \# \Bigl\{\, h \in \bigcup_{a \in [k]} \mathrm{Conj}(V_a) : h \text{ appears exactly $i$ times in } \bar V_{\sigma_1} \cdots \bar V_{\sigma_N} \,\Bigr\}.
\]
Note that  if $h \in \mathrm{Conj}(V_a)$ appears $i$ times in the rearranged sequence $ \bar V_{\sigma_1} \cdots \bar V_{\sigma_N}$, then $V_a$ has to appear at least $i$ times in the original sequence. Consequently, we have
\beq\label{size_bound_barJ_i}
|\bar J_i| \le |J_{\ge i}|.
\eeq

Our goal is to show that conditioned on the event $\typ(\delta)$, the support of $X$ has size $o(|G|)$. Since $\{B_{\sigma_i}\}_{i\leq N}\subseteq \{b_i: i\in [m]\}$, the product $\prod_{i\leq N}B_{\sigma_i}$ has at most $m^{N}$ possible values. We then focus on the support of  
$$\bar V_{\sigma_1}\cdots \bar V_{\sigma_{N-1}}\bar V_{\sigma_{N}}.$$  

Since $\bar V_{\sigma_i} \in \bigcup_{a \in [k]} \mathrm{Conj}(V_a)$, each $\bar V_{\sigma_i}$ can take at most $k m$ possible values. Hence, there can be at most $(km)^N$ different sequences formed by $\bar V_{\sigma_1}\cdots \bar V_{\sigma_{N-1}}\bar V_{\sigma_{N}}$. In addition, because $H$ is Abelian, permuting the elements of any sequence $h_1 h_2 \cdots h_N$ does not change the product, which reduces the number of distinct outcomes. To account for this, observe that the number of distinguished sequences corresponding to the same product is
\[
\frac{N!}{\prod_{i \ge 1} (i!)^{|\bar J_i|}} 
\ge (N/e)^N \exp\Bigl(-\sum_{i \ge 1} |\bar J_i| \log(i!)\Bigr).
\]
On the event $\typ(\delta)$, using \eqref{size_bound_barJ_i}, this quantity is further bounded below by
\[
\frac{N!}{\prod_{i \ge 1} (i!)^{|\bar J_i|}} \ge \exp\bigl( N \log N - N - t \bigr),
\]
which yields  the following upper bound on the size of the support of $X$:
\begin{align}\label{support_size}
\nonumber 
m^N \cdot \frac{(km)^N}{\exp(N\log N - N - t)}
&= \exp\bigl( N\log(k/N) + O(t) \bigr) \le \exp\bigl( (1+\delta)t \log(k/t) + O(t) \bigr) \\
&\le \exp\bigl( (1+2\delta)t \log(k/t) \bigr)
\end{align}
for arbitrarily small $\delta$ when $k$ is sufficiently large.

Finally, it is straightforward to check that 
\[
t\log(k/t) = (1-\ep)\log|G|\left(1+\frac{\log\log(k/\log|G|)-\log(1-\ep)}{\log(k/\log|G|)}\right)
\le (1-\ep/2)\log|G|,
\]
and consequently, 
\[
\eqref{support_size} \;\le\; |G|^{(1+2\delta)(1-\ep/2)} \;\le\; |G|^{\,1-\ep/4}
\]
when $\delta>0$ is chosen sufficiently small relative to $\ep$.  
Therefore, for a given good generator set $\mathcal{S}$, conditioned on $\typ(\delta)$, the support of $X$ has size at most $|G|^{1-\ep/4}$, completing the argument of 
$$\left\| \mathbb{P}_{\mathcal{S}}(X(t) = \cdot \mid \typ(\delta)) - \pi \right\|_{\mathrm{TV}} = 1 - o(1).$$

 \end{proof}

\bibliographystyle{plain}
\bibliography{RW_dihedral_group}

\end{document}